\documentclass[11pt]{amsart}
\usepackage{a4wide,enumerate,enumitem,amsmath,color}
\usepackage[hidelinks]{hyperref}
\usepackage{graphicx}
\usepackage{mathtools}
\usepackage{soul}
\usepackage{float}

\allowdisplaybreaks

\usepackage{tikz}
\usepackage{pgfplots}
\pgfplotsset{compat=1.18}

\let\pa\partial
\let\na\nabla
\let\eps\varepsilon
\newcommand{\N}{{\mathbb N}}
\newcommand{\R}{{\mathbb R}}

\newcommand{\D}{\mathcal{D}}
\newcommand{\Dd}{\mathrm{D}}
\newcommand{\Hi}{\mathcal{H}}
\newcommand{\T}{\mathcal{T}}
\newcommand{\E}{\mathcal{E}}

\newcommand{\Eint}{\E_{{\rm int}}}

\newcommand{\dist}{\mathrm{d}}
\newcommand{\di}{d}
\newcommand{\m}{\mathrm{m}}
\newcommand{\Mu}{\mathbb{M}^n_u}
\newcommand{\Mv}{\mathbb{M}_v}
\newcommand{\Ne}{\mathcal{N}}

\newtheorem{theorem}{Theorem}
\newtheorem{lemma}[theorem]{Lemma}
\newtheorem{proposition}[theorem]{Proposition}
\newtheorem{remark}[theorem]{Remark}
\newtheorem{corollary}[theorem]{Corollary}
\newtheorem{definition}{Definition}

\begin{document}

\title[Finite volume scheme for local sensing chemotaxis]{A finite volume scheme for the local sensing chemotaxis model}

\author[M. Herda]{Maxime Herda}
\address{Univ. Lille, CNRS, Inria, UMR 8524--Laboratoire Paul Painlev\'e, 59000 Lille}
\email{maxime.herda@inria.fr}

\author[A. Trescases]{Ariane Trescases}
\address{Institut de Math\'ematiques de Toulouse; UMR5219 - Université de Toulouse ; CNRS - UPS, F-31062 Toulouse Cedex 9, France}
\email{ariane.trescases@math.univ-toulouse.fr}

\author[A. Zurek]{Antoine Zurek}
\address{CNRS – Université de Montréal CRM – CNRS, Montréal, Canada and Universit\'e de Technologie de Compi\`egne, LMAC, 60200 Compi\`egne, France}
\email{antoine.zurek@utc.fr}

\date{\today}

\begin{abstract}
In this paper we design, analyze and simulate a finite volume scheme for a cross-diffusion system which models chemotaxis with local sensing. This system has the same {Lyapunov function (or entropy)} as the celebrated minimal Keller-Segel system, but unlike the latter, its solutions are known to exist globally in 2D. The long-time behavior of solutions is only partially understood which motivates numerical exploration with a reliable numerical method. We propose a linearly implicit, two-point flux finite volume approximation of the system. We show that the scheme preserves, at the discrete level, the main features of the continuous system, namely mass { conservation}, non-negativity of solution, entropy { dissipation}, and duality estimates. These properties allow us to prove the well-posedness, unconditional stability and convergence of the scheme. We also show rigorously that the scheme possesses an asymptotic preserving (AP) property in the quasi-stationary limit. We complement our analysis with thorough numerical experiments investigating convergence and AP properties of the scheme as well as its reliability with respect to stability properties of steady solutions.

\bigskip
		
\noindent\textbf{Mathematics Subject Classification (2020):} 35Q92, 35K51, 65M12, 65M08, 92C17.
		
\medskip
		
\noindent\textbf{Keywords:} finite volume scheme, entropy preserving, asymptotic preserving, Keller-Segel, local sensing, chemotaxis.

\end{abstract}
\maketitle

\tableofcontents

\section{Introduction}

\subsection{Presentation of the system}

In some bounded domain $\Omega \subset \R^\di$ ($\di \geq 1$) and for some time interval $(0,T)$ we consider the following system
\begin{align}\label{1.equ}
\pa_t u &= \Delta (\gamma(v) \, u), \quad \mbox{in }Q_T =\Omega\times (0,T),\\
\eps \pa_t v &= \delta \Delta v - \beta v+u,\quad \mbox{in }Q_T = \Omega \times (0,T),\label{1.eqv}
\end{align}
with $\eps$ a nonnegative constant, $\beta$ and $\delta$ some positive constants and where the function $\gamma$ is given by
\begin{align}\label{1.motility}
\gamma(s) = e^{-s}, \quad \forall s \geq 0.
\end{align}
The system is complemented by the following boundary and initial conditions
\begin{align}\label{1.BC}
&\na [ \gamma(v) u] \cdot \nu = \na v \cdot \nu = 0, \quad \mbox{on } \pa \Omega \times (0,T),\\
&u(0) = u^0, \quad \eps v(0) = \eps v^0, \quad \mbox{in }\Omega,\label{1.IC}
\end{align}
where $\nu$ is the exterior unit normal vector to $\pa \Omega$, and where $u^0$ and $v^0$ are nonnegative functions.

The system~\eqref{1.equ}--\eqref{1.eqv} is a subclass of the class of systems introduced by Keller and Segel in their seminal papers \cite{KeSe70,KeSe71a} in order to model chemotactic aggregation in cell colonies. This particular subclass corresponds to \emph{local sensing}, that is, the cell is not able to perceive a gradient in the concentration of chemoattractant by comparing the concentrations at two different locations (\emph{gradient sensing}) and instead relies only on the concentration at the point it is located to direct itself: for a modelling discussion on the subject see \cite{DKTY2019,KeSe71a,OtSt97}. The nonnegative quantities $u$ and $v$ correspond respectively to the cell density and the chemical concentration, the parameter $\delta$ is the diffusivity of the chemical, the parameter $\beta$ is its degradation rate, and the parameter $\eps$ indicates a typical relative order of magnitude of the time evolution of the source terms in~\eqref{1.eqv} over the time evolution of $v$: when $\eps$ vanishes, the system is called quasi-stationary (or sometimes parabolic-elliptic). Finally, the nonnegative function $\gamma$ represents the cell motility.
 
{The present paper concerns the numerical resolution and analysis of the system \eqref{1.equ}-\eqref{1.IC}.} In the last two decades a literature has emerged on the design and analysis of numerical schemes for { Keller-Segel-type systems, including the celebrated minimal Keller-Segel system}~\cite{AT21}.
The focus has been put on many aspects including preservation of the conservative and entropy diminishing  nature of the { system}, stability analysis, the possibility of handling non-structured meshes as well as high-order  discretizations. Finite difference schemes have been investigated in \cite{SaSu05, Sa09, LWZ18}. Finite volume schemes have been proposed starting with the seminal contribution \cite{Fil06}, followed by many authors \cite{CK08, ABS11, BCJ14, ZS17, AB17}. Finite element method \cite{Ma03, Sa07, BCW10, Sa12, SSKHT13, ZZZ16}, discontinuous Galerkin \cite{EK09, EI09, ZZLY16, LSY17, GLY19} and alternative numerical strategies \cite{BCR05, BCC08, Ep12, CEHK18, EpXi19} have been analysed as well. However, the previous numerical schemes do not concern chemotactic systems with local sensing as the one considered in the present manuscript.

Some methods have been proposed very recently in order to study the system~\eqref{1.equ}--\eqref{1.eqv} and its variant with logistic-type growth, that is, when~\eqref{1.equ} is replaced with
\begin{align}\label{1.equ_growth}
\pa_t u &= \Delta (\gamma(v) \, u) + r \, u(1-u), \quad \mbox{in }Q_T =\Omega\times (0,T),
\end{align}
with $r\ge 0$. In \cite{HeNeVa23}, a meshless method based on Generalized Finite Difference is developed to approximate the quasi-stationary case ($\eps=0$) for a quite broad range of motilities gamma, including in particular~\eqref{1.motility}. It is shown that this method is convergent when the growth parameter $r$ is large enough, a condition under which the solutions are known to relax towards the homogeneous steady state. We also mention the work \cite{BrPa24} where computer-assisted proofs are developed to study the steady states of the system~\eqref{1.eqv} and~\eqref{1.equ_growth} for one dimensional domains and with $r\ge 0$ (for quite arbitrary motilities $\gamma$): with this method the authors obtain at once the existence of non-homogeneous steady states and the validation of their numerical approximation, and provide bifurcation diagrams. 

Our goal here is to propose a numerical scheme for the evolutionary system~\eqref{1.equ}--\eqref{1.motility} providing a faithful approximation of the solutions in various regimes, including away from the homogeneous stabilization regime { and including both the full system ($\eps>0$) and the quasi-stationary asymptotic.}  To achieve this goal we aim for a scheme that preserves the main structural features of the equation (mass conservation, entropy dissipation, non-negativity of solutions), in order to ensure good stability properties, with a computational cost that is as low as possible.

\subsection{Description and novelty of our main results}

In this work, we introduce a two-point flux approximation (TPFA) finite volume scheme in space together with a one step implicit/explicit (IMEX) Euler scheme in time for the system~\eqref{1.equ}--\eqref{1.eqv}. This IMEX strategy, which was already used in \cite{LWZ18} for the minimal Keller-Segel system, allows us to decouple at the discrete level the equations~\eqref{1.equ} and~\eqref{1.eqv} and yields an efficient, w.r.t.~computational cost, numerical scheme. This efficiency is particularly important in order to numerically explore the infinite-time blow-up phenomenon satisfied by the solutions to the system~\eqref{1.equ}--\eqref{1.IC}.

Let us now introduce the main hypothesis and results of our study. In all this paper, we will work under the following hypothesis:
\begin{itemize}
\item[\textbf{(H1)}] Domain: $\Omega\subset\R^\di$ is a bounded and polyhedral domain ($\di=1$, $2$, or $3$).

\item[\textbf{(H2)}] Parameters: $\eps$ is a nonnegative constant and $\beta$ and $\delta$ are positive constants.

\item[\textbf{(H3)}] Cell motility: The function $\gamma$ is defined by~\eqref{1.motility}.

\item[\textbf{(H4)}] Initial data: The functions $u^0$ and $\eps v^0$ are nonnegative and belong respectively to $L^2(\Omega)$ and $H^1(\Omega)$.
\end{itemize}

{
\begin{remark} Let us precise that the scheme that we propose can be used for other cell motilities than  \eqref{1.motility} (see the numerical illustrations in Section~\ref{sec:DKTY_testcase} for an example), but the numerical analysis is limited to the exponential motility prescribed in \textbf{\emph{(H3)}}. Indeed, the core of our analysis relies on the entropy dissipation of the system \eqref{1.equ}--\eqref{1.motility} exposed (at the continuous level) in Section~\ref{sec.keyest}, which holds for the specific motility prescribed in~\eqref{1.motility}. Adapting the analysis - and possibly the numerical scheme - to other motilities will be the subject of a future work.
\end{remark}}

Under these hypothesis, we first prove in Theorem~\ref{thm.exi} that the scheme admits a unique positive solution at each time step. We also show that the solutions of the scheme satisfy a discrete counterpart of the entropy estimate (see~\eqref{1.EI} below) established in~\cite{BLT21,FuJi21a,JiWa20}. The proof of Theorem~\ref{thm.exi} strongly relies on the linearly implicit structure of our scheme. More precisely, this structure allows us to rewrite the scheme as linear systems of equations. Then, we prove that the associated matrices are invertible and admit some monotonicity properties ensuring the nonnegativity of the solutions. The fact that the scheme is entropy diminishing is proved by adapting at the discrete level the computations done in~\cite{BLT21,FuJi21a,JiWa20}.

Our second main result, exposed in Theorem~\ref{thm.conv}, is the (subsequential) convergence of the scheme when $\eps>0$. As often with finite volume methods~\cite{EGH00} for nonlinear PDEs, the convergence proof is done through a compactness argument which relies on some uniform estimates with respect to the size of the meshes in space and time. If our strategy is quite classical we need to establish an unusual (to our best knowledge) discrete duality estimate. In order to establish such estimate we introduce a convenient discrete $H^1$ dual norm and we adapt at the discrete level the computations of~\cite{BLT21}, see Section~\ref{sec.keyest} and Proposition~\ref{prop.dual} for more details. Then, thanks to this uniform estimate adequately combined with the entropy and mass estimates, we are able to prove the (subsequential) convergence of the scheme towards weak solutions to~\eqref{1.equ}--\eqref{1.IC} in the sense of Definition~\ref{def.weak} below.

Our third main result, Theorem~\ref{thm.AP}, concerns the asymptotic preserving (AP) property of our scheme in the quasi-stationary limit $\eps\to 0$. For this purpose, we consider a family of initial data $(u^0,v^0)$ indexed in $\eps\ge0$ and we introduce an additional hypothesis:
\begin{itemize}
\item[\textbf{(H5)}] Well-prepared initial data: The $\eps$-family of functions $(u^0,v^0)$ is uniformly bounded in $L^2(\Omega)\times H^1(\Omega)$, it is such that $u^0$ converges a.e. as $\eps\rightarrow0$ towards a limit still denoted $u^0$, and it satisfies the condition
\begin{equation}\label{cdtintau}
    \langle u^0 \rangle -\beta \langle v^0 \rangle= O(\sqrt{\eps}).
\end{equation}
\end{itemize}
The latter hypothesis is intimately connected to the initial layer in the limit $\eps\to0$. More precisely, one can easily show that \eqref{cdtintau} is equivalent to $\|\langle \partial_t v \rangle\|_{L^2(0,T)} = O(1)$, where $\langle\cdot\rangle$ denotes the space average. We refer to Section~\ref{sec:continuous} for details, and to Section~\ref{sec:num_AP} for numerical illustrations of the initial layer.

Then, under assumption \textbf{(H5)}, we show in Theorem~\ref{thm.AP} that when both the size of the meshes and the parameter $\eps$ in~\eqref{1.eqv} go to zero the solutions of the scheme converge up to a subsequence towards the weak solutions of the quasi-stationary system in the sense of Definition~\ref{def.weak}. A by-product of Theorem~\ref{thm.AP} is the (subsequential) convergence of the scheme when $\eps=0$ (without the additional assumption \textbf{(H5)} which becomes irrelevant in the quasi-stationary case, see Remark \ref{rk:byproduct}).

Finally, for space dimensions one and two, and under further regularity assumptions for the initial data, Corollary~\ref{cor.AP} states a stronger AP property in the sense that the whole sequence of numerical approximations converges (and not only a subsequence).

We have implemented the scheme in one and two space dimensions on unstructured meshes (see Section~\ref{sec:imp} for details) and performed thorough numerical experiments to illustrate and complement our theoretical results, through four testcases. In the first testcase we illustrate the second order convergence in space of the scheme{, for the error between the discrete solution and a projection of the continuous solution on the mesh}. {This super-convergence property of a TPFA finite volume scheme is expected, see for instance~\cite{DroNat18_super_conv}.} In the second testcase, we are interested in the asymptotic preserving property and we show the effect of the well-preparedness of the initial data in the sense of \textbf{(H5)} and in a stronger sense. In the third testcase, we investigate stability properties of stationary solutions. We show that linear stability properties of homogeneous equilibrium are preserved by the scheme. Finally we demonstrate the applicability of our scheme for motilities which do not enter our theoretical framework.

\subsection{Outline of the paper}

The paper is organized as follows. { Section~\ref{sec:continuous} focuses on the continuous system: after a brief literature review, we recall and develop some key notions and estimates in the continuous setting.} We introduce the numerical scheme and present the main results in Section~\ref{sec.main}. We prove that the scheme is well-defined and satisfy some first properties (mass conservation, entropy dissipation) in Section~\ref{sec.exiproof}, while in Section~\ref{sec.unifest} we establish some uniform estimates. Then, we show the convergence of the scheme in Section~\ref{sec.convproof}. Section~\ref{sec.AP} is dedicated to the AP property of the scheme. Finally, in Section~\ref{sec.numexp} we present the aforementioned numerical experiments.

\section{Insight from the continuous level}\label{sec:continuous}

\subsection{ Literature review}

{ On the theoretical side, the system~\eqref{1.equ}--\eqref{1.eqv} has been the subject of a wide recent literature treating questions such as global existence of solutions and their long-time behaviour.
}

The problem of existence of global solutions to the system~\eqref{1.equ}--\eqref{1.eqv} with~\eqref{1.BC}--\eqref{1.IC} has attracted much attention over the past decade, with the exciting outcome that solutions do exist globally for rather generic relevant motilities $\gamma$ in any dimension, see for example \cite{FuJi20,JiLa21} when $\eps=0$ and \cite{FuJi21a,JiLaZh22,FuSe22,DLTW23} when $\eps>0$. In particular, for $\gamma$ given by~\eqref{1.motility}, existence of global solutions is known in any dimension: see \cite{FuJi20,JiLa21} when $\eps=0$ and \cite{FuJi21a,JiLaZh22,FuSe22} when $\eps>0$ for a theory of classical solutions and see \cite{BLT21} for a theory of weak solutions.

In fact, the choice~\eqref{1.motility} is motivated by the studies \cite{FuJi20,BLT21,FuJi21a,JiWa20} where it is shown that in bi-dimensional domains for this specific choice of $\gamma$ the system~\eqref{1.equ}--\eqref{1.IC} exhibits a critical mass phenomenon somewhat reminiscent of the behavior of the minimal Keller-Segel system. For subcritical initial cell masses, solutions exist globally and remain bounded uniformly in time, and for supercritical initial cell masses, solutions exist globally and may blow up at infinite time. While the behavioral change happens at the exact same critical mass for the system~\eqref{1.equ}--\eqref{1.IC} as for the minimal Keller-Segel system, one noteworthy difference with the latter is that here the blow-up is delayed to infinity. We also note that in bi-dimensional domains the decaying exponential motility~\eqref{1.motility} seems critical, in the sense that for a positive-valued motility function $\gamma$ nonincreasing and decaying to zero at infinity with
\begin{equation*}
    \lim_{s\rightarrow\infty} e^{\alpha s} \gamma(s) = \infty, \quad \forall \alpha>0,
\end{equation*}
the global solutions of~\eqref{1.equ}--\eqref{1.eqv} are uniformly bounded and the blow-up is therefore ruled out \cite{FuJi21b,JiLaZh22}.

This similarity with the minimal Keller-Segel system in dimension two is not a mere coincidence and reflects a structural similarity (in any space dimension), that translates at the minimum into a common entropy functional for the two systems \cite{BLT21,FuJi21a,JiWa20}. This entropy functional will be used in the present analysis. Another similitude is that both systems share the same set of steady states. For this reason, the existence of non-homogeneous steady states as well as their qualitative and asymptotic behaviour have been extensively studied for about forty years, see for example \cite{ssSchaaf85} in one-dimensional domains,
\cite{ssWaWe02,ssSeSu00,ssPiWe06,ssPPV16,ssBCR21} in bi-dimensional domains, \cite{ssBil98,ssPiVa15,ssAgPi16,ssBCN17,ssBCF20} in higher-dimensional domains, and references therein.

Beyond well-posedness and boundedness results, and the construction of special blowing- up solutions in the supercritical regime, little is known about the qualitative behavior of the evolutionary system. We are aware of two qualitative studies, both in the case $\eps>0$. In dimension one, the existence and stability of monotone steady states with boundary spike has been explored in \cite{ssWaXu21} using bifurcation theory and numerical simulations. The formation of patterns has been studied numerically and theoretically in \cite{ChKi24} for a porous-medium type generalization of the system~\eqref{1.equ}--\eqref{1.eqv} with $\eps>0$. In particular for the exponential motility~\eqref{1.motility}, a linear stability analysis around the homogeneous steady state $\left(\mu,\frac\mu\beta\right)$ for a given $\mu>0$ (see also \cite[proof of Theorem 3.1]{DKTY2019}) suggests that pattern formation has to be expected under the condition
\begin{equation}\label{cdt.linearly.unstable}
     \mu>\beta + \delta \lambda_1(\Omega),
\end{equation}
where $\lambda_1(\Omega)$ is the principal eigenvalue of the Laplace operator $-\Delta$ on $\Omega$ with homogeneous Neumann boundary conditions. Furthermore, bi-dimensional numerical experiments, supplemented by heuristic arguments based on the study of the excitable density set, might suggest that under this condition we can expect formation of peaks.

{
As one can see from the short review above, the analysis for the system~\eqref{1.equ}--\eqref{1.eqv} for the cases $\eps>0$ or $\eps=0$ is mostly done separately. In fact, when a similar result exists for both cases, especially about existence or boundedness/blow-up, the result for the quasi-stationary case is in general more accessible than (and precedes in the literature) the one for the full parabolic case $\eps>0$. That is to say, the quasi-parabolic system appears to be a good approximation of the full system $\eps>0$, both seemingly reproducing its qualitative behaviour and more accessible analytically. This naturally motivates the study of the limit $\eps\longrightarrow0$. Many studies indeed tackle the analytical proof of a similar limit for related systems, mainly focusing on the minimal Keller-Segel system (see~\cite{Rac09,Lip14,WWX19,Miz19,KuOg20,KuOg20b,KuOg22,Lem13,OgSu23,NoSa24}; see as well~\cite{LWZ18,LXZ23} for numerical experiments and methods in the quasi-stationary asymptotics for the minimal Keller-Segel system). However, as far as we are aware of, this limit has not been studied for the continuous  system~\eqref{1.equ}--\eqref{1.eqv}.

In the present study, we recall some tools initially developed for showing the existence of solutions for the system~\eqref{1.equ}--\eqref{1.IC}, in particular entropy and duality estimates, and adapt them in a discrete setting. This is key to establish the well-posedness and the convergence of our scheme. We furthermore develop new estimates to prove analytically the quasi-stationary limit for the system~\eqref{1.equ}--\eqref{1.IC} that we also adapt to the discrete setting. This is used to prove that our scheme is AP in the quasi-stationary limit. Our final aim with this scheme is to test and observe the long-time behaviour partially described in the literature on the continuous system: the section \ref{sec.numexp}, dedicated to numerical experiments, explores in particular the pattern formation under (a discrete counterpart of) the condition \eqref{cdt.linearly.unstable} and questions the uniform boundedness of the solutions (as opposed to blow-up) in 2D.
}

\subsection{Notions of solution}

The notion of weak solutions for the system~\eqref{1.equ}--\eqref{1.IC} is defined as follows:

\begin{definition}[Weak solutions to~\eqref{1.equ}--\eqref{1.IC}]\label{def.weak} 
Assume \emph{\textbf{(H1)}--\textbf{(H4)}}. Let $T>0$ and let $(u,v)$ nonnegative with
\begin{align*}
u \in L^\infty(0,T;L^1(\Omega)), \quad v \in L^\infty(0,T;H^1(\Omega)), \quad \eps v \in H^1(0,T;L^2(\Omega)), \quad u \, \gamma(v) \in L^2(Q_T).
\end{align*}
We call $(u,v)$ a weak solution of~\eqref{1.equ}--\eqref{1.IC} on $(0,T)$ if for all $\varphi \in C^{\infty}_c(\overline{\Omega} \times [0,T))$ satisfying $\nabla \varphi \cdot \nu = 0$ on $\partial \Omega \times [0,T)$, we have
\begin{align}\label{1.weaku}
\int_{Q_T} u \, \pa_t \varphi \, \dist x\dist t + \int_\Omega u^0(x) \varphi(x,0) \, \dist x + \int_{Q_T} u \gamma(v) \, \Delta \varphi \, \dist x \dist t = 0,
\end{align}
and
\begin{align}\label{1.weakv}
\eps \int_{Q_T} v \pa_t \varphi \, \dist x\dist t +\eps \int_\Omega v^0(x) \, \varphi(x,0) \, \dist x = \delta \int_{Q_T} \nabla v \cdot \nabla \varphi \,\dist x \dist t + \int_{Q_T} (\beta \, v -u) \, \varphi \, \dist x\dist t.
\end{align}
In particular when $\eps=0$, $(u,v)$ is called a weak solution of the quasi-stationary system.
\end{definition}

\begin{remark}
Under the regularity assumptions of the above definition, the distributional formulation~\eqref{1.weaku}--\eqref{1.weakv} is equivalent to the formulation (1.4)--(1.6) given in~\cite[Definition 1.1]{BLT21}. Indeed, starting from~\eqref{1.weaku} and first testing with $\varphi$ homogeneous in space, we obtain that $\langle u(\cdot,t) \rangle=\langle u^0 \rangle$ for a.e. $t\in [0,T)$. Then for $\psi$ in $C^{\infty}_c(\overline{\Omega} \times [0,T))$ we use~\eqref{1.weaku} with $\varphi = (-\Delta)^{-1}\left(\psi-\langle\psi\rangle \right)$, where $(-\Delta)^{-1}$ has to be understood with homogeneous Neumann boundary conditions, and we get
\begin{eqnarray*}
&-&\int_{Q_T} \left(u \gamma(v)-\langle u \gamma(v)\rangle\right) \, \psi \,  \dist x\dist t\\
&=& \int_{Q_T} u \, \pa_t (-\Delta)^{-1}\left(\psi-\langle\psi\rangle \right) \,  \dist x\dist t + \int_\Omega u^0 \, (-\Delta)^{-1}\left(\psi(\cdot,0)-\langle\psi(\cdot,0)\rangle \right) \, \dist x\\
&=& \int_{Q_T} \left(u-\langle u \rangle \right) \, (-\Delta)^{-1}\pa_t \left(\psi-\langle\psi\rangle \right) \,  \dist x\dist t\\
&& \hspace{4cm}+ \int_\Omega \left(u^0-\langle u^0 \rangle \right) (-\Delta)^{-1}\left(\psi(\cdot,0)-\langle\psi(\cdot,0)\rangle \right) \, \dist x\\
&=& \int_{Q_T} (-\Delta)^{-1} \left(u-\langle u \rangle \right) \, \pa_t \psi \,  \dist x\dist t + \int_\Omega (-\Delta)^{-1} \left(u^0-\langle u^0 \rangle \right) \psi(x,0) \, \dist x.
\end{eqnarray*}
A density argument together with the regularity of $u \gamma(v)$ and $u^0$ allow to conclude that (1.4) and the first equality of (1.6) in~\cite[Definition 1.1]{BLT21} hold. The reverse direction of the equivalence follows from the same computations, and the treatment of the equation for the evolution of $v$ is standard.
Here, we chose to use the distributional formulation~\eqref{1.weaku}--\eqref{1.weakv} which appears naturally in the asymptotics of the numerical scheme.
\end{remark}

The proof of the AP property of the scheme relies on some uniform estimates with respect to the size of the meshes and the parameter $\eps$. While some of these estimates are a direct by-product of the estimates established for the convergence proof, an extra uniform estimate has to be obtained on the $L^2$ norm of $\pa_t v$. We first present this estimate in Section~\ref{sec.keyest} where for pedagogical purpose we establish it formally at the continuous level, highlighting the role of \textbf{(H5)}. We then adapt it at the discrete level in Proposition~\ref{prop.dtv}. Finally, the proof of the AP property relies on a compactness approach. However, the compactness properties of the solutions do not ensure that the whole sequence of solutions to the scheme converges toward the weak solutions of the quasi-stationary system. Therefore, we propose to consider a more restrictive class of solutions for the quasi-stationary system in which uniqueness can be proven. Following~\cite{BLT21}, we then consider a slightly stronger notion of solution, namely the weak-strong solution:

\begin{definition}[Weak-strong solution for the quasi-stationary system]\label{def.WSquasistationary} Assume \emph{\textbf{(H1)}--\textbf{(H4)}} and $\eps=0$. Let $T>0$, and let $(u,v)$ nonnegative. Then, $(u,v)$ is called a weak-strong solution of the quasi-stationary system on $(0,T)$ if $(u,v)$ is a weak solution of the quasi-stationary system in the sense of Definition~\ref{def.weak} which additionally satisfies
\begin{align*}
    \delta \Delta v - \beta v + u = 0, \quad \mbox{in }L^2(Q_T).
\end{align*}
\end{definition}

Of course, one needs to derive sufficient conditions ensuring the existence of such weak-strong solutions. The answer is (partially) given in~\cite{BLT21}. Indeed, we have the following result: 

\begin{proposition}[Uniqueness of weak-strong solutions] \label{prop.exiWS}
Assume \emph{\textbf{(H1)}--\textbf{(H4)}} with $\di=1$ or $2$ and $\eps=0$. Assume furthermore that $u^0 \in L^\infty(\Omega)$. Then there exists at most one weak-strong solution of the quasi-stationary system in the sense of Definition~\ref{def.WSquasistationary} such that
\begin{align}\label{1.reguniq}
    u \in L^\infty(0,T;L^q(\Omega)), \quad \mbox{for }q > 2.
\end{align}
\end{proposition}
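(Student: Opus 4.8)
The plan is to prove uniqueness by the classical dual-norm (weak–strong) stability argument, comparing two solutions in $H^{-1}$ and closing the estimate with Grönwall's lemma. Let $(u_1,v_1)$ and $(u_2,v_2)$ be two weak-strong solutions of the quasi-stationary system on $(0,T)$ issued from the same datum $u^0$ and satisfying~\eqref{1.reguniq}, and set $U=u_1-u_2$, $V=v_1-v_2$. Testing~\eqref{1.weaku} with space-homogeneous functions gives $\langle u_i(t)\rangle=\langle u^0\rangle$, so $\langle U(t)\rangle=0$, and integrating the identity $\delta\Delta v_i-\beta v_i+u_i=0$ over $\Omega$ gives $\langle V(t)\rangle=0$; hence $\Phi(t):=(-\Delta)^{-1}U(t)$ is well defined (with $(-\Delta)^{-1}$ the inverse of $-\Delta$ with homogeneous Neumann conditions acting on zero-average functions, as in the Remark), and we set $\|U\|_{H^{-1}}^2:=\|\nabla\Phi\|_{L^2(\Omega)}^2=\int_\Omega U\,\Phi\,\dist x$. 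A crucial preliminary point is that, because $\di\le2$ and $u_i\in L^\infty(0,T;L^q(\Omega))$ with $q>2$, elliptic regularity applied to $\delta\Delta v_i=\beta v_i-u_i$ (together with $H^1(\Omega)\hookrightarrow L^q(\Omega)$ to start the bootstrap and $W^{2,q}(\Omega)\hookrightarrow L^\infty(\Omega)$ to finish it) yields $v_i\in L^\infty(Q_T)$, with a bound depending only on $\|u_i\|_{L^\infty(0,T;L^q)}$; in particular $\gamma(v_1)=e^{-v_1}\ge c_0>0$ a.e.\ in $Q_T$.

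Subtracting the two instances of~\eqref{1.weaku} (the initial terms cancel), testing with $-\Phi$, and using $\partial_t U=\Delta(u_1\gamma(v_1)-u_2\gamma(v_2))$ together with the splitting $u_1\gamma(v_1)-u_2\gamma(v_2)=\gamma(v_1)U+u_2(\gamma(v_1)-\gamma(v_2))$, one obtains the identity
\begin{equation*}
\frac12\frac{\dist}{\dist t}\|U(t)\|_{H^{-1}}^2 + \int_\Omega \gamma(v_1)\,U^2\,\dist x = -\int_\Omega u_2\,(\gamma(v_1)-\gamma(v_2))\,U\,\dist x .
\end{equation*}
This is rigorous after a standard mollification in time, since $\Phi\in L^\infty(0,T;H^2(\Omega))$ and $\partial_t\Phi=(-\Delta)^{-1}\partial_t U\in L^\infty(0,T;L^q(\Omega))$ (using $u_i\gamma(v_i)\in L^\infty(0,T;L^q(\Omega))$). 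It is essential here to keep the diffusive dissipation $\int_\Omega\gamma(v_1)U^2$ on the left-hand side: dropping it would only lead to a sublinear Grönwall inequality, which would not be enough for uniqueness.

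To close the estimate I bound the right-hand side: since $|\gamma'|\le1$ on $[0,\infty)$ we have $|\gamma(v_1)-\gamma(v_2)|\le|V|$, so by Young's inequality and Hölder's inequality with exponents $(q/2,q/(q-2))$,
\begin{equation*}
\Big|\int_\Omega u_2\,(\gamma(v_1)-\gamma(v_2))\,U\,\dist x\Big| \le \int_\Omega |u_2|\,|V|\,|U|\,\dist x \le \frac{c_0}{2}\|U\|_{L^2(\Omega)}^2 + \frac{1}{2c_0}\|u_2\|_{L^q(\Omega)}^2\,\|V\|_{L^{2q/(q-2)}(\Omega)}^2 .
\end{equation*}
Testing $\delta\Delta V-\beta V+U=0$ with $V$ gives $\delta\|\nabla V\|_{L^2}^2+\beta\|V\|_{L^2}^2=\int_\Omega\nabla\Phi\cdot\nabla V\,\dist x\le\|U\|_{H^{-1}}\|\nabla V\|_{L^2}$, hence $\|V\|_{H^1(\Omega)}\le C\|U\|_{H^{-1}}$; and since $\di\le2$ and $q>2$ (so $2q/(q-2)<\infty$), the embedding $H^1(\Omega)\hookrightarrow L^{2q/(q-2)}(\Omega)$ gives $\|V\|_{L^{2q/(q-2)}(\Omega)}\le C\|U\|_{H^{-1}}$. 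Plugging this in and absorbing $\tfrac{c_0}{2}\|U\|_{L^2}^2$ into $\int_\Omega\gamma(v_1)U^2\ge c_0\|U\|_{L^2}^2$, one arrives at
\begin{equation*}
\frac{\dist}{\dist t}\|U(t)\|_{H^{-1}}^2 \le C\,\|u_2(t)\|_{L^q(\Omega)}^2\,\|U(t)\|_{H^{-1}}^2 \le C\,\|u_2\|_{L^\infty(0,T;L^q(\Omega))}^2\,\|U(t)\|_{H^{-1}}^2 .
\end{equation*}
Since $U(0)=u^0-u^0=0$, Grönwall's lemma forces $\|U(t)\|_{H^{-1}}=0$ for all $t\in(0,T)$, that is $u_1=u_2$; then $V$ solves $\delta\Delta V-\beta V=0$ with $\nabla V\cdot\nu=0$, so $V=0$ and $v_1=v_2$.

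The algebra is short; the two points requiring real care — and exactly where the hypotheses $\di\le2$ and $q>2$ enter — are the elliptic bootstrap producing the lower bound $\gamma(v_1)\ge c_0>0$, and the rigorous justification of the dual-norm energy identity for the low-regularity difference $U$. Both can be carried out along the lines of~\cite{BLT21}.
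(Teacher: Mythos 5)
Your dual-norm Gr\"onwall argument is correct and is essentially the paper's own route: the paper establishes this uniqueness claim by adapting the weak--strong $H^{-1}$-duality argument of \cite[Theorem 1.6]{BLT21} to the quasi-stationary system (see Remark~\ref{Rk.weakstong}), with $\di\le 2$ and $q>2$ entering exactly where you place them, namely the $L^\infty$ bound on $v$ (hence $\gamma(v_1)\ge c_0>0$) and the embedding $H^1(\Omega)\hookrightarrow L^{2q/(q-2)}(\Omega)$. One minor caveat: on a possibly non-convex polygonal domain the bound $v_i\in L^\infty(Q_T)$ is more safely obtained by Stampacchia/Gr\"oger-type estimates for the Neumann problem (the paper invokes \cite{Grog93}) than via $W^{2,q}(\Omega)\hookrightarrow L^\infty(\Omega)$, since $W^{2,q}$ elliptic regularity can fail at re-entrant corners; this does not affect the rest of your argument.
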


\begin{remark}\label{Rk.weakstong}
In~\cite{BLT21}, the existence and uniqueness of the weak-strong solution is obtained for the system~\eqref{1.equ}--\eqref{1.IC} for $\eps>0$ and not for the quasi-stationary system. Besides, this property is only proved for a smooth bounded domain $\Omega$ in $\R^\di$ while in this work we consider only polygonal (or polyhedron) domains. We therefore explain here how to adapt the arguments of~\cite{BLT21} in our framework. First, under the regularity assumption~\eqref{1.reguniq} one can readily adapt the proof of uniqueness for the quasi-stationary system, see the proof of~\cite[Theorem 1.6]{BLT21}. Besides, in order to construct a weak-strong solution it is sufficient to improve the regularity of the weak solutions. The arguments given in the proof of~\cite[Proposition 1.7]{BLT21} to improve the regularity of the weak solution of~\eqref{1.equ}--\eqref{1.IC} for one dimensional domains can be easily adapted for the quasi-stationary system in order to obtain that $u$ and $v$ belong to $L^\infty(Q_T)$. However, in bi-dimensional domains, due to the regularity of $\pa \Omega$, we need to modify more deeply the arguments of~\cite[Section 4.3]{BLT21}. For this purpose, noting that $u$ belongs to $L^\infty(0,T;L\log L)$ (this is a consequence of the entropy inequality~\eqref{1.EI} with $\eps=0$), we apply the result in~\cite{Grog93} which ensures that $v\in L^\infty(Q_T)$. Then, using the duality estimate we deduce that $u\in L^2(Q_T)$ and we conclude thanks to~\cite[Theorem 4.3.1.4]{Grisvard} that $v \in L^2(0,T;H^2(\Omega))$. Then, by a slight adaption of the proof of~\cite[Lemma 4.2]{BLT21} we conclude that $u \in L^\infty(0,T;L^q(\Omega))$ for any $q \geq 2$.
\end{remark}

\subsection{Key estimates at the continuous level}\label{sec.keyest}

As previously exposed, the proofs of our main theorems rely on the adaptation at the discrete level of some uniform estimates that are for most of them rigorously established for the continuous system in~\cite{BLT21}, namely { mass conservation, entropy dissipation}, and duality estimates. 
The study of the limit $\eps\rightarrow 0$ with the proof of the AP property also requires additional estimates that are uniform in $\eps$. Therefore we establish some $\eps$-uniform estimates for the time derivative of $v$, that are apparently new, though inspired by the treatment of the quasi-stationary system in~\cite{BLT21}. To give a flavor of our proofs, we use this section to present these estimates formally at the continuous level. Let us then consider $(u,v)$ nonnegative such that $(u,v)$ is a strong solution of~\eqref{1.equ}--\eqref{1.IC} and assume that $(u,v)$ is as smooth as needed for the computations performed in this section. For the sake of simplicity we assume in this section $\eps>0$, though the case $\eps=0$ can be treated similarly with minor adaptations.\\

\noindent{\it Mass and entropy.} Let us first state the mass estimates. For this purpose, we recall that $\langle w\rangle$ denotes the mean value of $w$ on $\Omega$:
\begin{align*}
    \langle w \rangle := \frac{1}{\m(\Omega)} \int_\Omega w(x) \, \dist x, \quad \forall w \in L^1(\Omega),
\end{align*}
where $\m$ denotes the Lebesgue measure (in any space dimension). Then, the system preserves the nonnegativity of $u$ and by integrating~\eqref{1.equ} over the domain $Q_t$ for $t>0$ we get the preservation of its mean value along time, i.e.,
\begin{align}\label{1.massu}
    \langle u(t)\rangle = \frac{1}{\m(\Omega)} \, \|u(t)\|_{L^1(\Omega)} =  \langle u^0 \rangle, \quad \mbox{for }t >0.
\end{align}
Besides, $v$ is also a nonnegative function and by integrating~\eqref{1.eqv} over the domain $\Omega$ we have
\begin{align*}
    \frac{\dist}{\dist t} \left(\langle v(t) \rangle - \frac{\langle u^0 \rangle}{\beta} \right) = - \frac{\beta}{\eps} \, \left(\langle v(t) \rangle - \frac{\langle u^0 \rangle}{\beta} \right),
\end{align*}
so that its mean value verifies
\begin{align}\label{1.massv}
    \langle v(t) \rangle = \frac{\langle u^0 \rangle}{\beta} + e^{-\beta t/\eps} \, \left( \langle v^0 \rangle - \frac{\langle u^0 \rangle}{\beta} \right), \quad \mbox{for }t>0.
\end{align}

Now, as for the minimal Keller-Segel model, one can notice {\cite{BLT21}} that~\eqref{1.equ}--\eqref{1.motility} admits a 
{Lyapunov} functional:
\begin{align*}
    H(u,v) = \int_\Omega \left[u \log u - u + 1 + \frac{\beta}{2} |v|^2 -uv + \frac{\delta}{2} \left|\nabla v \right|^2 \right] \, \dist x.
\end{align*}
{Indeed, computing
\begin{eqnarray*}
    \frac{\dist}{\dist t} H(u,v) &=& \int_\Omega \log \left(u e^{-v}\right) \, \partial_t u \, \dist x + \int_\Omega \left[ \beta v - u - \delta \Delta v \right] \, \partial_t v \, \dist x \\
    &=& - \int_\Omega \frac{1}{u e^{-v}} \left| \nabla \left(u e^{-v}\right)\right|^2 \, \dist x - \eps \int_\Omega \left|\partial_t v\right|^2\, \dist x \le 0,
\end{eqnarray*}
we obtain that $H(u,v)$ is nonincreasing. Note that this computation crucially relies on the specific choice of $\gamma$ in~\eqref{1.motility}. Furthermore, we get} the following dissipation equality
\begin{equation}\label{1.EI}\begin{split}
H(u(t),v(t)) + 4 \int_0^t\int_\Omega \left|\nabla \sqrt{u e^{-v}} \right|^2 \,\dist x \dist s + \eps \int_0^t\int_\Omega \left|\partial_t v \right|^2 \,\dist x\dist s & \\
= H(u^0,v^0), & \quad \mbox{for }t>0.
\end{split}\end{equation}
However, let us emphasize that by itself this dissipation equality is of limited use, for example it does not directly give \emph{a priori} estimates of $v$ in $L^\infty((0,T);H^1(\Omega))$. This is due to the nonpositive quadratic term $-u v$ in the definition of $H$. In order to circumvent this issue the main idea is to make use of a duality estimate, in a strategy that we now describe.\\

\noindent{\it Combination of duality and entropy.} { As explained above, our objective here is to find a lower bound for the nonpositive quadratic term $-u v$ in the definition of $H$. Noting that in the definition of $H$ there also appears a (nonnegative) term equivalent to the $H^1$ norm of $v$, our strategy will be to find an estimate for $u-\langle u \rangle$ in $(H^1)'$, the dual space of $H^1$. Indeed, provided we have such estimates, we can control,
\begin{equation*}
    u v = \underbrace{(u-\langle u \rangle)}_{\in (H^1)'} \underbrace{\phantom{\langle} v \phantom{\rangle}}_{\in H^1} + \,  \langle u^0 \rangle \underbrace{\phantom{\langle} v \phantom{\rangle}}_{\in H^1} \in L^1.
\end{equation*}
Roughly speaking, the main argument to obtain this $(H^1)'$ estimate of $u-\langle u \rangle$ is a combination of the equation for $u$ and the relation
\begin{align}\label{eq:identity_Hdual}
\|f\|_{(H^1)'(\Omega)} = \|\nabla (-\Delta)^{-1} f\|_{L^2(\Omega)},
\end{align}
valid for any $f$ with $\int_\Omega f(x) \, \dist x = 0$. To be more precise, let us first introduce some notations here:
}
 let $(H^1)'(\Omega)$ denote the dual space of $H^1(\Omega)$, let $H^1_{\Diamond}(\Omega)$ denote the set of all functions in $H^1(\Omega)$ with mean value zero, let $(H^1_{\Diamond})'(\Omega)$ denote its dual space, and finally let us introduce the operator $\mathcal{K}=(-\Delta)^{-1} : (H^1_{\Diamond})'(\Omega) \to H^1_{\Diamond}(\Omega)$ which inverses the Laplace operator with homogeneous Neumann boundary conditions.

Following~\cite{BLT21} by applying $\mathcal{K}$ to~\eqref{1.equ} and testing the result with $u-\langle u^0\rangle$ (which is of zero mean thanks to the mass conservation~\eqref{1.massu}) we obtain,
\begin{equation}\label{1.dual}\begin{split}
\frac12 \, \sup_{[0,T]} \, \|u-\langle u^0\rangle \|^2_{(H^1)'(\Omega)} &+ \int_0^T \int_\Omega \gamma(v) \, u^2 \,\dist x \dist t \\
&\leq \frac12 \|u^0-\langle u^0\rangle\|^2_{(H^1)'(\Omega)} + \m(\Omega) \langle u^0\rangle^2 \, T.
\end{split}\end{equation}

Then, denoting by $C_T>0$ the constant such that
\begin{align*}
    \|u(t)-\langle u^0\rangle \|_{(H^1)'(\Omega)} \leq C_T, \quad \mbox{for }t\in(0,T),
\end{align*}
we have, for $t\in(0,T)$,
\begin{align*}
    -\int_\Omega u \, v \, \dist x &= -\int_\Omega (u-\langle u^0 \rangle) \, v \,\dist x - \int_\Omega \langle u^0 \rangle \, v \, \dist x\\
    &= \int_\Omega v \, \Delta \mathcal{K} (u-\langle u^0 \rangle) \, \dist x - \int_\Omega \langle u^0 \rangle \, v \,\dist x\\
    &=- \int_\Omega \nabla \mathcal{K} (u-\langle u^0 \rangle) \cdot \nabla v \, \dist x - \int_\Omega \langle u^0 \rangle \, v \, \dist x\\
    &\geq -\frac{C_T^2}{\delta} - \frac{\delta}{4} \int_\Omega |\nabla v|^2 \, \dist x - \frac{m^2}{\beta} - \frac{\beta}{4} \int_\Omega |v|^2 \, \dist x.
\end{align*}
This yields, for $t\in(0,T)$,
\begin{align}\label{1.lowerbound_ent}
    H(u,v) \geq -\frac{C_T^2}{\delta^2} - \frac{\langle u^0\rangle^2}{\beta} + \int_\Omega \left[ u\log u - u+1 + \frac{\beta}{4} \, |v|^2 + \frac{\delta}{4} |\nabla v|^2 \right] \, \dist x.
\end{align}
Thus combining~\eqref{1.EI} and~\eqref{1.lowerbound_ent} one can deduce an upper bound on the $H^1(\Omega)$ norm of $v$ and on the logarithmic entropy of $u$, which is uniform over any finite time interval $(0,T)$. These estimates (in their discretized versions, and established rigorously), will be at the cornerstone of our convergence proof, see Theorem~\ref{thm.conv}. Besides, contrary to the minimal Keller-Segel model, these estimates also imply that if a blow-up phenomenon occurs it has to be postponed to the infinite time limit.\\

\noindent{\it Estimate in the quasi-stationary relaxation.} As already exposed, we intend in our final main result, see Theorem~\ref{thm.AP}, to consider the limit $\eps \to 0$. For this purpose we need to establish some estimates that are uniform with respect to $\eps$. However, we notice that the entropy dissipation equality~\eqref{1.EI} (combined with~\eqref{1.lowerbound_ent}) yields a $L^2(Q_T)$ estimate on $\sqrt{\eps} \partial_t v$ only. In order to derive an estimate on $\partial_t v$ uniform in $\eps$, we first differentiate~\eqref{1.eqv} w.r.t. time and get
\begin{eqnarray*}
\eps \partial_{tt}v + \beta \partial_t v = \partial_t u + \delta \Delta \partial_t v.
\end{eqnarray*}
Note that the right-hand-side having zero-mean (thanks to the mass conservation~\eqref{1.massu}) we can apply the operator $\mathcal{K}=(-\Delta)^{-1}$ to the equality above so that, after testing the result with $\left(\partial_t v-\langle \partial_t v\rangle\right)$ we obtain
\begin{align*}
\int_\Omega \left(\partial_t v-\langle \partial_t v\rangle\right) \mathcal{K} \big[\eps \partial_{tt}v &+ \beta \partial_t v\big]\,\dist x\\
&= \int_\Omega \left(\partial_t v-\langle \partial_t v\rangle\right) \mathcal{K} \left[\partial_t u\right]\,\dist x + \delta \int_\Omega \left(\partial_t v-\langle \partial_t v\rangle\right) \mathcal{K} \left[ \Delta \partial_t v\right]\,\dist x\\
&= -\int_\Omega \left(\partial_t v-\langle \partial_t v\rangle\right) \left(u\gamma(v) - \langle u \gamma(v)\rangle\right) \,\dist x - \delta \int_\Omega \left(\partial_t v-\langle \partial_t v\rangle\right)^2\,\dist x\\
&\le -\frac{\delta}{2} \int_\Omega \left(\partial_t v-\langle \partial_t v\rangle\right)^2\,\dist x +\frac{1}{2 \delta} \int_\Omega \left(u\gamma(v) - \langle u \gamma(v)\rangle\right)^2\,\dist x,
\end{align*}
where we have used the equation~\eqref{1.equ} for the second equality, and Young's inequality for the last inequality. We furthermore compute the left-hand-side as
\begin{eqnarray*}
&&\int_\Omega \left(\partial_t v-\langle \partial_t v\rangle\right) \mathcal{K} \left[\eps \partial_{tt}v + \beta \partial_t v\right]\,\dist x\\
&=& \eps \int_\Omega \left(\partial_t v-\langle \partial_t v\rangle\right) \mathcal{K} \left[ \partial_{tt}v - \langle \partial_{tt}v \rangle \right]\,\dist x + \beta \int_\Omega \left(\partial_t v-\langle \partial_t v\rangle\right) \mathcal{K} \left[ \partial_t v - \langle \partial_t v \rangle\right]\,\dist x\\
&=&  \eps \int_\Omega \nabla \mathcal{K} \left[\partial_t v-\langle \partial_t v\rangle\right] \cdot \nabla \mathcal{K} \left[ \partial_{tt}v - \langle \partial_{tt}v \rangle \right]\,\dist x + \beta \int_\Omega \left|\nabla \mathcal{K} \left[ \partial_t v - \langle \partial_t v \rangle\right] \right|^2\,\dist x\\
&=&  \frac{\eps}{2} \frac{\textrm{d}}{\textrm{d}t}\int_\Omega \left|\nabla \mathcal{K} \left[\partial_t v-\langle \partial_t v\rangle\right] \right|^2\,\dist x  + \beta \int_\Omega \left|\nabla \mathcal{K} \left[ \partial_t v - \langle \partial_t v \rangle\right] \right|^2\,\dist x,
\end{eqnarray*}
where we have used the definition of $\mathcal{K}$ (including the associated boundary conditions).
Summing up, we have that
\begin{multline}\label{estim.dtv}
\frac{\delta}{2} \int_\Omega \left(\partial_t v-\langle \partial_t v\rangle\right)^2\,\dist x
+ \frac{\eps}{2} \frac{\textrm{d}}{\textrm{d}t}\int_\Omega \left|\nabla \mathcal{K} \left[\partial_t v-\langle \partial_t v\rangle\right] \right|^2\,\dist x + \beta \int_\Omega \left|\nabla \mathcal{K} \left[ \partial_t v - \langle \partial_t v \rangle\right] \right|^2\,\dist x\\
\le
\frac{1}{2 \delta} \int_\Omega \left(u\gamma(v) - \langle u \gamma(v)\rangle\right)^2\,\dist x.
\end{multline}
Using that $u \gamma(v)$ is bounded in $L^2(Q_T)$ thanks to the duality estimate~\eqref{1.dual} and the fact that $\gamma$ in~\eqref{1.motility} is uniformly bounded, we finally get that, for any $T>0$,
\begin{eqnarray*}
\int_0^T \int_\Omega \left(\partial_t v-\langle \partial_t v\rangle\right)^2 \, \dist x \dist t
\le
C_T'.
\end{eqnarray*}
In order to estimate the $L^2$ norm of $\langle \pa_t v \rangle$, we differentiate~\eqref{1.massv} with respect to time to get,
\begin{align}\label{eq:evoldtV}
\langle \partial_t v\rangle = \frac{\langle u^0\rangle-\beta\langle v^0\rangle }{\eps} e^{-\frac{\beta}{\eps}t},
\end{align}
so that
\begin{eqnarray*}
\int_0^T \int_\Omega \langle \partial_t v\rangle^2 \, \dist x \dist t
= \frac{\m(\Omega)}{2\beta} \, \left(\frac{\langle u^0 \rangle -\beta \,\langle v^0\rangle}{\sqrt{\eps}} \right)^2 \left(1-e^{-\frac{2\beta}{\eps}T}\right).
\end{eqnarray*}
Therefore we conclude that $\partial_t v$ is bounded in $L^2(Q_T)$ uniformly in $\eps$ exactly under the assumption \textbf{(H5)}.

\section{Numerical scheme and main results}\label{sec.main}

\subsection{Meshes and discrete functional framework}\label{sec.mesh}

From now on $\di=1$, $2$ or $3$. For presentation purposes we present the discretization of the domain $Q_T=\Omega\times(0,T)$ and our results for bi-dimensional domains only. Unless stated otherwise, the generalization to other space dimensions is straightforward.

Let then $\Omega\subset\R^2$ be a bounded, 
polygonal domain. An admissible mesh of $\Omega$ is given by (i) a family $\T$
of open polygonal control volumes (or cells), (ii) a family $\E$ of edges, and
(iii) a family ${\mathcal P}$ of points $(x_K)_{K\in\T}$ associated to the
control volumes and satisfying Definition 9.1 in \cite{EGH00}. This definition
implies that the straight line $\overline{x_Kx_L}$ between two centers of
neighboring cells is orthogonal to the edge $\sigma=K|L$ between two cells.
For instance, Vorono\"{\i} meshes satisfy this condition \cite[Example 9.2]{EGH00}.
The size of the mesh is denoted by $\Delta x = \max_{K\in\T}\operatorname{diam}(K)$.
The family of edges $\E$ is assumed to consist of interior edges $\E_{\rm int}$
satisfying $\sigma\in\Omega$ and boundary edges $\sigma\in\E_{\rm ext}$ satisfying
$\sigma\subset\pa\Omega$. For a given $K\in\T$, $\E_K$ is the set of edges of $K$,
and it splits into $\E_K=\E_{{\rm int},K}\cup\E_{{\rm ext},K}$ and $\Ne(K)$ the set of its neighbouring control volumes (i.e. the cell $L$ such that $\overline{L}\cap\overline{K}$ is an edge of the discretization). For any $\sigma\in\E$, there exists at least one cell $K\in\T$ such that 
$\sigma\in\E_K$. Moreover, for $\sigma\in\E$, we introduce the distance
\begin{align*}
  \dist_\sigma = \begin{cases}
	\mathrm{dist}(x_K,x_L) &\quad\mbox{if }\sigma=K|L\in\E_{{\rm int},K}, \\
		\mathrm{dist}(x_K,\sigma) &\quad\mbox{if }\sigma\in\E_{{\rm ext},K},
	\end{cases}
\end{align*}
where $	\mathrm{dist}$ is the Euclidean distance in $\R^2$, and the transmissibility coefficient
\begin{equation}\label{2.trans}
  \tau_\sigma = \frac{\m(\sigma)}{\dist_\sigma}.
\end{equation}
The mesh is assumed to satisfy the following regularity assumption: There exists
$\zeta>0$ such that for all $K\in\T$ and $\sigma\in\E_K$,
\begin{equation}\label{2.dd}
  	\mathrm{dist}(x_K,\sigma)\ge \zeta \dist_\sigma.
\end{equation}

We also introduce suitable function spaces for the numerical scheme. The space of
piecewise constant functions is defined by 
\begin{align}\label{def:H_T}
  \Hi_{\T} = \bigg\lbrace w: \Omega\to\R:\exists (w_K)_{K\in\T}\subset\R,\
	w(x)=\sum_{K\in\T}w_K\mathbf{1}_K(x)\bigg\rbrace,
\end{align}
where $\mathbf{1}_K$ is the characteristic function on $K$. In the sequel, we will identify each $w \in \Hi_\T$ with its associated sequence $(w_K)_{K\in\T}$ and for any element $w \in \Hi_\T$, we will write (as at the continuous level)
\begin{align*}
\langle w \rangle = \frac{1}{\m(\Omega)} \sum_{K\in \T} \m(K) w_K.
\end{align*}
Besides, in order to define a norm on the space $\Hi_\T$, we first introduce the notation
\begin{align*}
  w_{K,\sigma} = \begin{cases}
	w_L &\quad\mbox{if }\sigma=K|L\in\Eint, \\
	w_K &\quad\mbox{if }\sigma\in\E_{{\rm ext},K},
	\end{cases} 
\end{align*}
for $K\in\T$, $\sigma\in\E_K$ and the discrete operators
\begin{align*}
  \textrm{D}_{K,\sigma} w := w_{K,\sigma}-w_K, \quad 
	\textrm{D}_\sigma w := |\mathrm{D}_{K,\sigma} w|.
\end{align*}
Let $q\in[1,\infty)$ and $w\in\Hi_{\T}$. The discrete $W^{1,q}$ seminorm and discrete $W^{1,q}$ norm on $\Hi_{\T}$ are given by 
\begin{align*}
  |w|_{1,q,\T}^q = \sum_{\sigma\in\E}\m(\sigma)\,\dist_{\sigma} 
	\bigg|\frac{\textrm{D}_{\sigma} w}{\dist_{\sigma}}\bigg|^q, \quad 
	\|w\|_{1,q,\T}^q = |w|^q_{1,q,\T} + \|w\|^q_{L^q(\Omega)},
\end{align*}
respectively, and $\|w\|_{L^q(\Omega)}$ denotes the $L^q$ norm 
i.e. $\|w\|_{L^q(\Omega)} = (\sum_{K\in\T}\m(K)|w_K|^q)^{1/q}$.

We finally define the space-time discrete framework. Let $T>0$, let $N_T\in\N$ be the number of time steps. We introduce the
step size $\Delta t=T/N_T$ as well as the time steps $t_n=n\Delta t$ 
for $n=0,\ldots,N_T$. We denote by ${\mathcal D}$
the admissible space-time discretization of $Q_T$ composed of an admissible
mesh ${\mathcal T}$ and the values $(\Delta t,N_T)$. We introduce the space $\Hi_\D$ of piecewise constant in time functions with values in $\Hi_{\T}$,
\begin{multline*}
  \Hi_\D = \bigg\{ w:\overline\Omega\times[0,T]\to\R:
	\exists(w^n)_{n=1,\ldots,N_T}\subset\Hi_{\T}, \quad
	w(x,t) = \sum_{n=1}^{N_T} w^n(x)\mathbf{1}_{(t_{n-1},t_n]}(t)\bigg\},
\end{multline*}
equipped, for $1 \leq p,q < \infty$, with the discrete $L^p(0,T;W^{1,q}(\Omega))$ norm
\begin{align*}
  \bigg(\sum_{n=1}^{N_T}\Delta t \|w^n\|_{1,q,\T}^{p}\bigg)^{1/p}.
\end{align*}

\subsection{Numerical scheme}

We first discretize the initial conditions as
\begin{align}\label{2.ic}
u^0_K = \frac{1}{\m(K)} \int_{K} u^0(x) \, \mathrm{d}x, \quad v^0_K = \frac{1}{\m(K)}\int_{K} v^0(x) \, \mathrm{d}x, \quad \forall K \in \T.
\end{align}
Now for a given $n\geq 1$ and a given $(u^{n-1},v^{n-1}) \in \R^{2\mathrm{Card}(\T)}$, the numerical scheme is defined as
\begin{align}\label{2.schu}
\m(K)\frac{u^n_K-u^{n-1}_K}{\Delta t} &= \sum_{\sigma \in \E_K} \tau_\sigma \, \Dd_{K,\sigma} { \left( u^n \gamma(v^n) \right)}, \quad \forall K \in \T,\\
\eps \, \m(K)\frac{v^n_K-v^{n-1}_K}{\Delta t} &= \delta \sum_{\sigma \in \E_K} \tau_\sigma \, \Dd_{K,\sigma} v^n + \m(K) \, (u^{n-1}_K-\beta v^n_K), \quad \forall K \in \T.\label{2.schv}
\end{align}
We notice that the scheme~\eqref{2.schu}--\eqref{2.schv} is linear and it can be rewritten as
\begin{align}\label{2.schmat}
\Mv \, v^n = b^{n-1}_v, \qquad \Mu \, u^n = b^{n-1}_u,
\end{align}
where $b^{n-1}_v=( \m(K)(\eps\,v^{n-1}+\Delta t u^{n-1}))_{K\in\T}$, $b^{n-1}_u = (\m(K) u^{n-1}_K)_{K \in \T}$ and $\Mv$ and $\Mu$ are squared matrices of size $\mathrm{Card}(\T)\times\mathrm{Card}(\T)$ with entries
\begin{equation}
\left\{
\begin{array}{ll}
(\Mv)_{K,K} =\m(K)(\eps+\Delta t \beta) + \delta \Delta t \sum_{\sigma \in \E_{{\rm int},K}} \tau_\sigma,  &\forall K \in \T,\\
(\Mv)_{K,L} = -\delta\, \Delta t  \, \tau_\sigma,  &\forall K \in \T, \forall L \in \Ne(K), \, \sigma=K|L,	 \\
(\Mv)_{K,L} = 0,  &\forall K \in \T, \forall L \notin \Ne(K),
\end{array}
\right.
\end{equation}
and
\begin{equation}
\left\{
\begin{array}{ll}
(\Mu)_{K,K} =\m(K) + \Delta t \sum_{\sigma \in \E_{{\rm int},K}} \tau_\sigma \gamma(v^n_K),  &\forall K \in \T,\\
(\Mu)_{K,{L}} =- \Delta t  \, \tau_\sigma \gamma(v^n_L),  &\forall K \in \T, \forall L \in \Ne(K), \,\sigma=K|L,\\
(\Mu)_{K,L} =0,  &\forall K \in \T, \forall L \notin \Ne(K).
\end{array}
\right.
\end{equation}

\subsection{Main results}

Our first main result deals with the existence of a unique solution to the scheme~\eqref{2.ic}--\eqref{2.schv} at each time step. But first let us introduce the definition of the discrete entropy functional
\begin{align}\label{2.defent}
H(u^n,v^n) = \sum_{K \in \T} \m(K) \left(h(u^n_K) + \frac{\beta}{2} |v^n_K|^2 - u^n_K v^n_K \right) + \frac{\delta}{2} \sum_{\sigma \in \E} \tau_\sigma \, \left(\Dd_{\sigma} v^n\right)^2,
\end{align}
where $h(x)=x(\log(x)-1)+1$. We also introduce the corresponding discrete entropy dissipation functional as follows
\begin{align}\label{2.defdiss}
D(u^n,v^n,v^{n-1}) = 4\sum_{\sigma \in \E} \tau_\sigma \left(\Dd_{\sigma} \sqrt{u^n \gamma(v^n)}\right)^2 + \eps \sum_{K\in \T} \m(K) \left|\frac{v^n_K-v^{n-1}_K}{\Delta t}\right|^2.
\end{align}

\begin{theorem}[Well-posedness of the scheme]\label{thm.exi}
Assume \emph{\textbf{(H1)}--\textbf{(H4)}}. Then, for any $1\leq n \leq N_T$, there exists a unique positive solution $(u^n,v^n)$ to the scheme~\eqref{2.ic}--\eqref{2.schv}. Moreover this solution satisfies the following properties:
\begin{itemize}
\item Mass estimates: for all $n\in \{1,\ldots,N_T\}$ we have
\begin{align}\label{2.massu}
\langle u^n\rangle  = \langle u^0 \rangle = \frac{1}{\m(\Omega)} \int_\Omega u^0(x) \, \dist x,
\end{align}
and
\begin{align}\label{2.massv}
\langle v^n\rangle = \frac{\langle u^0 \rangle}{\beta}   + \left( \frac{\eps}{\eps+\beta \Delta t}\right)^n \left(\langle v^0\rangle -\frac{\langle u^0 \rangle}{\beta} \right).
\end{align}
\item Entropy dissipation inequality: for all $n\in \{1,\ldots,N_T\}$ it holds
\begin{align}\label{2.EI}
H(u^n,v^n) + \Delta t \, D(u^n,v^n,v^{n-1}) \leq H(u^{n-1},v^{n-1}),
\end{align}
where $H$ and $D$ are defined in~\eqref{2.defent} and~\eqref{2.defdiss}.
\end{itemize}
\end{theorem}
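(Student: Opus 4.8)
\textbf{Plan.} The argument splits into three parts matching the three assertions: (i) existence, uniqueness and positivity of $(u^n,v^n)$; (ii) the mass identities \eqref{2.massu}--\eqref{2.massv}; (iii) the discrete entropy inequality \eqref{2.EI}. I would proceed by induction on $n$, assuming $(u^{n-1},v^{n-1})$ is known and positive (the base case $n=0$ being the discretized initial data \eqref{2.ic}, which is nonnegative by \textbf{(H4)}; positivity of $u^0_K$ on cells where $u^0\not\equiv0$ and a short approximation/maximum-principle remark handle the degenerate possibility).

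\textbf{Step 1: the $v$-equation.} Since \eqref{2.schv} does not involve $u^n$, I solve for $v^n$ first. The matrix $\Mv$ is independent of $v^n$: it has positive diagonal entries $\m(K)(\eps+\Delta t\beta)+\delta\Delta t\sum_{\sigma\in\E_{{\rm int},K}}\tau_\sigma$, nonpositive off-diagonal entries $-\delta\Delta t\,\tau_\sigma$, and is strictly diagonally dominant by rows (the diagonal exceeds the sum of the moduli of the off-diagonal entries precisely by $\m(K)(\eps+\Delta t\beta)>0$, using \textbf{(H2)}). Hence $\Mv$ is an invertible M-matrix with $\Mv^{-1}\ge0$ entrywise; since $b^{n-1}_v=(\m(K)(\eps v^{n-1}_K+\Delta t\,u^{n-1}_K))_K$ is nonnegative (and not identically zero when $\Delta t\,u^{n-1}\not\equiv0$), we get $v^n=\Mv^{-1}b^{n-1}_v$ is well-defined and strictly positive.

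\textbf{Step 2: the $u$-equation.} With $v^n>0$ in hand, the coefficients $\gamma(v^n_K)=e^{-v^n_K}>0$ are determined, so $\Mu$ is a fixed matrix: positive diagonal, nonpositive off-diagonal entries $-\Delta t\,\tau_\sigma\gamma(v^n_L)$, and weakly diagonally dominant by columns --- indeed summing column $L$ of $\Mu$ gives $\m(L)+\Delta t\sum_{\sigma\in\E_{{\rm int},L}}\tau_\sigma\gamma(v^n_L)-\sum_{K\in\Ne(L)}\Delta t\,\tau_{K|L}\gamma(v^n_L)=\m(L)>0$ (the conservative/telescoping structure of the flux). This gives invertibility of $\Mu$ and $\Mu^{-1}\ge0$, so $u^n=\Mu^{-1}b^{n-1}_u$ exists, is unique, and is nonnegative; strict positivity follows from irreducibility of $\Mu$ (connectedness of the mesh) together with $b^{n-1}_u\gneq0$, using the preservation of total mass established in Step 3. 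This proves the first assertion.

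\textbf{Step 3: mass identities.} Summing \eqref{2.schu} over $K\in\T$, the right-hand side vanishes by the antisymmetry $\tau_\sigma\Dd_{K,\sigma}(u^n\gamma(v^n))=-\tau_\sigma\Dd_{L,\sigma}(u^n\gamma(v^n))$ for $\sigma=K|L$ and the no-flux treatment of boundary edges; this yields $\sum_K\m(K)u^n_K=\sum_K\m(K)u^{n-1}_K$, hence \eqref{2.massu} by induction. For \eqref{2.massv}, sum \eqref{2.schv} over $K$: the $\delta$-term telescopes to zero, leaving $\eps\m(\Omega)(\langle v^n\rangle-\langle v^{n-1}\rangle)/\Delta t=\m(\Omega)(\langle u^{n-1}\rangle-\beta\langle v^n\rangle)=\m(\Omega)(\langle u^0\rangle-\beta\langle v^n\rangle)$ using \eqref{2.massu}. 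Solving this scalar recursion for $\langle v^n\rangle-\langle u^0\rangle/\beta$ gives the geometric factor $\eps/(\eps+\beta\Delta t)$ and \eqref{2.massv}.

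\textbf{Step 4: entropy inequality.} This is the main obstacle and the place where the specific choice $\gamma(s)=e^{-s}$ is essential. I would mimic the continuous computation following \eqref{1.EI}. Test \eqref{2.schu} with the discrete ``chemical potential'' $\log(u^n_K e^{-v^n_K})=\log u^n_K-v^n_K$ and \eqref{2.schv} with $-(v^n_K-v^{n-1}_K)/\Delta t$ (or, more precisely, with $(v^n_K-v^{n-1}_K)$ scaled appropriately so the $\eps$-term produces the dissipation $\eps\m(K)|(v^n_K-v^{n-1}_K)/\Delta t|^2$), then add. On the left one uses discrete convexity inequalities --- $h(a)-h(b)\le(a-b)h'(a)=(a-b)\log a$ for the $h$ part, and $\frac\beta2(|v^n_K|^2-|v^{n-1}_K|^2)\le\beta v^n_K(v^n_K-v^{n-1}_K)$, plus the analogous inequality for the Dirichlet-energy term $\frac\delta2\tau_\sigma((\Dd_\sigma v^n)^2-(\Dd_\sigma v^{n-1})^2)\le\delta\tau_\sigma\Dd_{K,\sigma}v^n\,\Dd_{K,\sigma}(v^n-v^{n-1})$ after summation by parts --- to bound $H(u^n,v^n)-H(u^{n-1},v^{n-1})$ from above by the tested equations. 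The diffusive term $\sum_{\sigma}\tau_\sigma\Dd_{K,\sigma}(u^n\gamma(v^n))\,\Dd_{K,\sigma}(\log u^n-v^n)$ must then be shown to dominate $4\sum_\sigma\tau_\sigma(\Dd_\sigma\sqrt{u^n\gamma(v^n)})^2$; at the continuous level this is the identity $\nabla(uw)\cdot\nabla\log(uw)=4|\nabla\sqrt{uw}|^2$ with $w=e^{-v}$ (here the fact that $-v=\log\gamma(v)$ exactly is what makes $\log u-v=\log(u\gamma(v))$, i.e. it is the log of the flux argument), and at the discrete level it follows from the elementary inequality $(a-b)(\log a-\log b)\ge4(\sqrt a-\sqrt b)^2$ applied with $a=u^n_{K,\sigma}\gamma(v^n_{K,\sigma})$, $b=u^n_K\gamma(v^n_K)$, together with careful bookkeeping of which cell values of $\gamma$ appear in the TPFA flux (the scheme uses the upwind-in-$K$ value $\gamma(v^n_K)$ versus $\gamma(v^n_L)$, so one must check the flux $\tau_\sigma\Dd_{K,\sigma}(u^n\gamma(v^n))$ is consistent with writing it as a difference of the products $u^n\gamma(v^n)$). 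Handling the cross term $-u^nv^n$ and the mixed contributions from testing two different equations with related but not identical test functions, and verifying that all leftover terms either cancel or have the right sign, is the delicate part; I expect it to require the same algebraic rearrangement as in \cite{BLT21,FuJi21a,JiWa20} transcribed to the mesh, with the mild additional subtlety that the source term $u^{n-1}_K$ (not $u^n_K$) appears in \eqref{2.schv}, which is exactly why one tests with the time increment of $v$ rather than with $v^n$ itself. Summing over $K$ and $\sigma$ and discarding nonnegative remainder terms yields \eqref{2.EI}.
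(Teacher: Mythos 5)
Your proposal is correct in substance, and the mass and entropy computations coincide with the paper's: sum over cells for \eqref{2.massu}--\eqref{2.massv}; test \eqref{2.schu} with $\Delta t\,\log(u^n_K\gamma(v^n_K))=\Delta t(\log u^n_K-v^n_K)$ and \eqref{2.schv} with $v^n_K-v^{n-1}_K$, use $h(a)-h(b)\le (a-b)\log a$ and $a^2-b^2\le 2a(a-b)$, and finish with $(a-b)(\log a-\log b)\ge 4(\sqrt a-\sqrt b)^2$. Two of your hedges can be discharged at once: there is no upwinding in the flux, since $\tau_\sigma\,\Dd_{K,\sigma}(u^n\gamma(v^n))$ is by definition the plain two-point difference of the product $u^n\gamma(v^n)$ (the $\gamma(v^n_L)$ in the $(K,L)$ entry of $\Mu$ is just this difference written out), so $\log(u^n\gamma(v^n))$ is exactly the conjugate variable; and the cross terms cancel identically, $-(u^n_K-u^{n-1}_K)v^n_K-u^{n-1}_K(v^n_K-v^{n-1}_K)=u^{n-1}_Kv^{n-1}_K-u^n_Kv^n_K$, which is precisely what the IMEX choice of $u^{n-1}_K$ in \eqref{2.schv} buys, so no further rearrangement is needed. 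Where you genuinely diverge from the paper is the treatment of positivity, which is needed so that $\log u^n_K$ makes sense: the paper regularizes the initial datum, $u^{0,\omega}_K=u^0_K+\omega$, proves existence, positivity, mass and the entropy inequality for the regularized scheme (where the right-hand sides are strictly positive, so no irreducibility is invoked), and then passes to the limit $\omega\to0$, recovering positivity of $u^n$ a posteriori from the boundedness of the dissipation term $\Dd_{K,\sigma}(u^n\gamma(v^n))\,\Dd_{K,\sigma}\log(u^n\gamma(v^n))$ combined with mass conservation and connectivity of the mesh; you instead get strict positivity directly from the fact that $\Mv$ and $\Mu$ are irreducible nonsingular M-matrices (connected mesh graph, $\delta>0$, $\gamma>0$), whose inverses are entrywise positive, applied to $b\gneq0$. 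This avoids the regularization and the limit passage altogether, and the induction only needs $u^{n-1}\ge0$, $\not\equiv0$ (the convexity inequality is harmless at $b=0$ since $h(0)=1$), which is a slightly cleaner route; note only that strict positivity of $v^n$ at the first step requires the same irreducibility argument you use for $\Mu$ (with $\Mv^{-1}\ge0$ alone you get nonnegativity), and that, exactly as in the paper, the statement implicitly assumes $u^0\not\equiv0$.
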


\begin{remark}\label{rk.limit_delta0}
Let us emphasize that the statement of Theorem~\ref{thm.exi} { also} holds in the quasi-stationary regime $\eps=0$. It actually also holds in the asymptotic regime $\delta=0$. Let us precise nonetheless that the regime $\delta=0$ is very degenerate and the convergence of the scheme as well as the well-posedness of the continuous system are far from being { guaranteed} in this case.
\end{remark}

The next main result deals with the convergence of the scheme. We first need to introduce some notation. For $K\in\T$ and $\sigma\in\E_K$,
we define the cell $T_{K,\sigma}$ of the dual mesh:
\begin{itemize}
\item If $\sigma=K|L\in\E_{{\rm int},K}$, then $T_{K,\sigma}$ is that cell 
(``diamond'') whose
vertices are given by $x_K$, $x_L$, and the end points of the edge $\sigma$.
\item If $\sigma\in\E_{{\rm ext},K}$, then $T_{K,\sigma}$ is that cell (``triangle'')
whose vertices are given by $x_K$ and the end points of the edge $\sigma$.
\end{itemize}
The cells $T_{K,\sigma}$ define a partition of $\Omega$. Then, the approximate gradient of $w\in \Hi_\D$ is defined by
\begin{align}\label{def.nab}
  \na^{\mathcal D} w(x,t) = \frac{\m(\sigma)}{\m(T_{K,\sigma})}
	(\mathrm{D}_{K,\sigma} w^k)\, \nu_{K,\sigma}
	\quad\mbox{for }x\in T_{K,\sigma},\ t\in(t_{k-1},t_{k}],
\end{align}
where $\nu_{K,\sigma}$ is the unit vector that is normal to $\sigma$ and points
outwards of $K$.

Finally, we introduce a family $(\mathcal{D}_m)_{m\in\N}$ of admissible space-time 
discretizations of $Q_T$ indexed by the size 
$\eta_m=\max\{\Delta x_m,\Delta t_m\}$ of the mesh, satisfying $\eta_m\to 0$
as $m\to\infty$. We denote by $\T_m$ the corresponding meshes of $\Omega$ and
by $\Delta t_m$ the corresponding time step sizes.  
Finally, we set $\na^m:=\na^{\mathcal{D}_m}$.

\begin{theorem}[Convergence of the scheme]\label{thm.conv}
Assume \emph{\textbf{(H1)}--\textbf{(H4)}} and $\eps>0$. Let $(\mathcal{D}_m)_{m\in\N}$ 
be a family of admissible meshes satisfying~\eqref{2.dd} uniformly in $m\in\N$ and such that $\eta_m\to 0$
as $m\to\infty$. Let $(u_m,v_m)_{m\in\N}$ be a family of finite volume solutions to~\eqref{2.ic}--\eqref{2.schu} obtained in Theorem~\ref{thm.exi}. Then, there exists two nonnegative functions $u \in L^\infty(0,T;L^1(\Omega))$ and $v\in L^\infty(0,T;H^1(\Omega)) \cap H^1(0,T;L^2(\Omega))$ such that, up to a subsequence, as $m\to\infty$ we have
\begin{align*}
    u_m &\rightharpoonup u \quad \mbox{weakly in }L^1(Q_T),\\
    v_m &\to v \quad \mbox{strongly in }L^2(Q_T),\\
    \nabla^m v_m &\rightharpoonup \nabla v \quad  \mbox{weakly in }L^2(Q_T).
\end{align*}
Moreover, $(u,v)$ is a weak solution of~\eqref{1.equ}--\eqref{1.IC} in the sense of Definition~\ref{def.weak}. 
\end{theorem}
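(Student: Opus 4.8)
The plan is to follow the now-standard finite volume convergence machinery à la \cite{EGH00}, combining the a priori estimates of Theorem~\ref{thm.exi} with additional uniform bounds, proving discrete compactness, and passing to the limit in the weak formulation. First I would collect the $m$-uniform estimates: mass conservation~\eqref{2.massu}--\eqref{2.massv} gives $\|u_m\|_{L^\infty(0,T;L^1)}$ and a bound on $\langle v_m\rangle$; the discrete entropy inequality~\eqref{2.EI}, combined with the discrete analogue of the lower bound~\eqref{1.lowerbound_ent} (which requires the discrete duality estimate~\eqref{1.dual}, i.e.\ Proposition~\ref{prop.dual}), yields a uniform bound on the discrete entropy of $u_m$ and, crucially, on $\|v_m\|_{L^\infty(0,T;H^1_\T)}$ together with $\eps\|\partial_t^{\Delta t}v_m\|_{L^2(Q_T)}^2 \lesssim 1$ via the summed dissipation $\Delta t\sum_n D(u^n,v^n,v^{n-1})$. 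The duality estimate~\eqref{1.dual} additionally gives $\|\sqrt{\gamma(v_m)}\,u_m\|_{L^2(Q_T)}\lesssim 1$, and since $\gamma(v_m)=e^{-v_m}$ is bounded between $0$ and $1$ but may degenerate where $v_m$ is large, one extracts from the entropy-dissipation term $\sum_\sigma\tau_\sigma(\mathrm D_\sigma\sqrt{u^n\gamma(v^n)})^2$ a discrete $H^1$ bound on $\sqrt{u_m\gamma(v_m)}$, hence (by a discrete Sobolev/Poincaré inequality of the type in \cite{EGH00,BCF15}) an $L^2$-in-space-time bound on $\sqrt{u_m\gamma(v_m)}$ beyond its mean, and thus on $u_m\gamma(v_m)$ in some $L^p(Q_T)$, $p>1$.

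Second, I would establish compactness. For $v_m$: the uniform discrete $H^1$ bound plus a uniform estimate on the time translates of $v_m$ (obtained from the $v$-equation~\eqref{2.schv}, using the bounded source $u^{n-1}-\beta v^n$ and the $\eps$-weighted time-derivative bound when $\eps>0$, noting $\eps$ is fixed here) allow an application of a discrete Aubin--Lions--Simon lemma (e.g.\ \cite{ACM17,GL12}) giving $v_m\to v$ strongly in $L^2(Q_T)$ with $v\in L^\infty(0,T;H^1)\cap H^1(0,T;L^2)$, and the reconstructed gradient $\nabla^m v_m \rightharpoonup \nabla v$ weakly in $L^2(Q_T)$ by the standard consistency of $\nabla^{\mathcal D}$ \cite[Lemma~4.4]{EGH00}-type arguments. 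For $u_m$: the $L\log L$ (de la Vallée-Poussin) bound gives weak $L^1(Q_T)$ compactness, so $u_m\rightharpoonup u$ weakly in $L^1(Q_T)$ up to a subsequence, with $u\geq 0$ and $u\in L^\infty(0,T;L^1)$ by weak lower semicontinuity. Since $v_m\to v$ a.e.\ (up to a further subsequence) and $\gamma$ is continuous and bounded, $\gamma(v_m)\to\gamma(v)$ strongly in $L^p$ for all finite $p$; combined with the weak $L^1$ (in fact weak $L^{p'}$ from the entropy/duality bound) convergence of $u_m$, one identifies the weak limit of the product $u_m\gamma(v_m)$ as $u\gamma(v)$ — this is the delicate identification step and I would handle it by showing $u_m\gamma(v_m)$ is bounded in $L^p(Q_T)$ for some $p>1$ (hence equi-integrable) and using the a.e.\ convergence of $\gamma(v_m)$ against the weak-$L^1$ convergence of $u_m$, or alternatively by passing through $\sqrt{u_m\gamma(v_m)}$ which converges weakly in $L^2$ with a.e.-identified limit.

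Third, I would pass to the limit in the scheme. Fix $\varphi\in C^\infty_c(\overline\Omega\times[0,T))$ with $\nabla\varphi\cdot\nu=0$ on $\partial\Omega$, set $\varphi_K^n=\varphi(x_K,t_n)$, multiply~\eqref{2.schu} by $\Delta t\,\varphi_K^{n-1}$ and sum over $K$ and $n$; a discrete integration by parts (Abel summation in $n$ for the discrete time derivative, and the standard edge summation $\sum_K\sum_{\sigma\in\E_K}\tau_\sigma \mathrm D_{K,\sigma}(u^n\gamma(v^n))\varphi_K^n = -\sum_\sigma \tau_\sigma\mathrm D_\sigma(u^n\gamma(v^n))\mathrm D_\sigma\varphi^n$ for the diffusion term) rewrites the identity in a form whose terms converge: the discrete time-derivative term converges to $\int_{Q_T}u\,\partial_t\varphi + \int_\Omega u^0\varphi(\cdot,0)$ using weak $L^1$ convergence of $u_m$ and the consistency of the piecewise-constant-in-time reconstruction of $\partial_t\varphi$; the diffusion term, rewritten using a second discrete integration by parts back onto $\Delta\varphi$ (or kept in divergence form and compared to $\int u_m\gamma(v_m)\Delta\varphi$ via a consistency error $O(\Delta x)$ controlled by the $L^p$ bound on $u_m\gamma(v_m)$ and the smoothness of $\varphi$), converges to $\int_{Q_T}u\gamma(v)\Delta\varphi$ by the product identification above. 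The analogous, easier manipulation of~\eqref{2.schv} — where all terms are linear except none, the source $u^{n-1}$ being handled by the weak $L^1$ convergence and a shift $t_{n-1}\to t_n$ error $\to 0$, and the diffusion term by the weak $L^2$ convergence of $\nabla^m v_m$ — yields~\eqref{1.weakv}.

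The main obstacle is the identification of the nonlinear flux limit, i.e.\ proving $u_m\gamma(v_m)\rightharpoonup u\gamma(v)$ in the sense needed to pass to the limit in the diffusion term of the $u$-equation. The difficulty is genuinely two-fold: $u_m$ only converges weakly in $L^1$ (no better a priori, since we are at the level of $L\log L$ bounds and cannot exclude infinite-time concentration), so strong convergence of $\gamma(v_m)$ alone is not quite enough — one needs equi-integrability of the product, which is exactly what the discrete duality estimate $\|\sqrt{\gamma(v_m)}\,u_m\|_{L^2}\lesssim 1$ from Proposition~\ref{prop.dual} buys us, but transferring this into an $L^p(Q_T)$ bound on $u_m\gamma(v_m)=\sqrt{\gamma(v_m)}\cdot\sqrt{\gamma(v_m)}u_m$ requires care because $\sqrt{\gamma(v_m)}=e^{-v_m/2}\le 1$ handles one factor trivially, giving $u_m\gamma(v_m)\le \sqrt{\gamma(v_m)}u_m$ pointwise, hence a clean $L^2(Q_T)$ bound on $u_m\gamma(v_m)$ itself — so in fact equi-integrability is immediate and the identification reduces to: bounded in $L^2$, $\gamma(v_m)\to\gamma(v)$ a.e.\ and in every $L^p$, $u_m\rightharpoonup u$ weakly-$L^1$, therefore (testing the product against $L^\infty$ functions and splitting $u_m\gamma(v_m)-u\gamma(v)=(u_m-u)\gamma(v)+u_m(\gamma(v_m)-\gamma(v))$, the first term $\to 0$ by weak-$L^1$ convergence against $\gamma(v)\in L^\infty$, the second $\to 0$ by Cauchy--Schwarz using the $L^2$ bound on $u_m$ — wait, $u_m$ is only $L^1$; instead bound the second term in $L^1$ by $\|u_m\gamma(v_m)^{1/2}\|_{L^2}\|\gamma(v_m)^{-1/2}(\gamma(v_m)-\gamma(v))\|_{L^2}$ which is delicate where $v_m$ is large) — the cleanest route is to work with $w_m:=\sqrt{u_m\gamma(v_m)}$, which is bounded in $L^2(0,T;H^1_\T)$ by the dissipation term, hence compact in $L^2(Q_T)$ by discrete Aubin--Lions if one controls its time translates (here one uses that $u_m$ satisfies the scheme and $\partial_t^{\Delta t}(u_m)$ is bounded in a discrete $(H^1)'$-type dual norm by the duality estimate), so $w_m\to w$ strongly in $L^2$ and a.e., and then $u_m=w_m^2/\gamma(v_m)\to w^2/\gamma(v)=:u$ a.e., upgrading the weak-$L^1$ convergence to an a.e.\ convergence that makes the product identification $u_m\gamma(v_m)=w_m^2\to w^2=u\gamma(v)$ strongly in $L^1$ automatic — this is the route I would take, and arranging the discrete Aubin--Lions input for $w_m$ (the correct dual-norm time-translate estimate) is where the real work lies.
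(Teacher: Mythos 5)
Your overall architecture (mass/entropy/duality estimates, Dunford--Pettis for $u_m$, Kolmogorov/Aubin--Lions-type compactness for $v_m$, then passing to the limit in the scheme tested against $\varphi$ with $\nabla\varphi\cdot\nu=0$) is the same as the paper's. The place where your proposal goes astray is the identification of the limit of $u_m\gamma(v_m)$. Your first route is in fact the paper's route, and the worry that makes you abandon it is unfounded: for the splitting $u_m(\gamma(v_m)-\gamma(v))$ you do not need any $L^2$ control of $u_m$. Since $(u_m)$ is equi-integrable (this is exactly what the $L\log L$ bound gives via Dunford--Pettis) and $|\gamma(v_m)-\gamma(v)|\le 2$ with $\gamma(v_m)\to\gamma(v)$ a.e., an Egorov argument gives $u_m\gamma(v_m)\rightharpoonup u\gamma(v)$ weakly in $L^1(Q_T)$; this is precisely Lemma~\ref{lem.convugamm} (Fonseca--Leoni, Prop.~2.61), which the paper invokes. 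The pointwise bound $u_m\gamma(v_m)\le u_m\sqrt{\gamma(v_m)}$ together with the duality estimate~\eqref{4.estdual} then upgrades this to weak convergence in $L^2(Q_T)$, which is all that is needed downstream. By contrast, the route you finally advocate --- strong $L^2$ compactness of $w_m=\sqrt{u_m\gamma(v_m)}$ via a discrete Aubin--Lions lemma --- contains a genuine gap that you acknowledge but do not close: a dual-norm time estimate on $\partial_t^{\Delta t}u_m$ does not transfer to time compactness of $w_m$, because of the square root and the time-dependent factor $\gamma(v_m)$; the known nonlinear/degenerate Aubin--Lions lemmas do not apply off the shelf here. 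Fortunately this detour is unnecessary: no strong convergence of $u_m$ or of the flux is needed anywhere in the proof.

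A second, smaller gap is in your treatment of the consistency error when comparing the discrete flux term with $\int_{Q_T}u_m\gamma(v_m)\Delta\varphi$. You claim the $O(\eta_m)$ error is controlled by ``the $L^p$ bound on $u_m\gamma(v_m)$ and the smoothness of $\varphi$''; this is not sufficient. The error has the form $\sum_n\Delta t\sum_\sigma \m(\sigma)\,\mathrm{D}_\sigma(u^n\gamma(v^n))\,R^{n-1}_\sigma$ with $|R^{n-1}_\sigma|\le C\eta_m$, and bounding $\sum_\sigma\m(\sigma)\bigl(|a_K|+|a_{K,\sigma}|\bigr)$ by cellwise norms costs a factor $\eta_m^{-1}$ (perimeter versus area), so an $L^1$ or $L^p$ bound alone only yields $O(1)$, not $o(1)$. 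The paper resolves this by factoring the edge jump as $\bigl(\sqrt{u_K\gamma(v_K)}-\sqrt{u_L\gamma(v_L)}\bigr)\bigl(\sqrt{u_K\gamma(v_K)}+\sqrt{u_L\gamma(v_L)}\bigr)$ and applying Cauchy--Schwarz, using the entropy-dissipation control~\eqref{2.EI} of $|\sqrt{u^n\gamma(v^n)}|_{1,2,\T}$ for the first factor and the duality bound on $\|u^n\sqrt{\gamma(v^n)}\|_{L^2}$ (together with the mesh regularity~\eqref{2.dd}) for the second. You do record the discrete $H^1$ bound on $\sqrt{u_m\gamma(v_m)}$ among your a priori estimates, but it must actually be used at this step for the limit passage in the $u$-equation to close.
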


\begin{remark}
When comparing this result with the existence result from~\cite{BLT21}, we observe that in~\emph{\textbf{(H4)}} the condition $u^0 \in L^2(\Omega)$ is slightly stronger than the ones imposed in~\cite{BLT21}. This stronger assumption arises when establishing a discretized version of~\eqref{1.dual}, see Remark~\ref{rk:initial_L2} for more details.
\end{remark}

Eventually, we will prove that the scheme~\eqref{2.ic}--\eqref{2.schv} is uniformly, w.r.t.~$\eps$ and the size of the meshes, asymptotic preserving.

\begin{theorem}[AP property]\label{thm.AP}
Assume \emph{\textbf{(H1)}--\textbf{(H5)}}, and let $(\mathcal{D}_m)_{m\in\N}$ 
be a family of admissible meshes satisfying~\eqref{2.dd} uniformly in $m\in\N$ and such that $\eta_m\to 0$
as $m\to\infty$. Consider a sequence $(\eps_m)_{m \in \N}\subset[0,\infty)$ such that $\eps_m \to 0$ as $m \to \infty$ and an associated sequence $(u_m,v_m)_{m\in \N}$ of finite volume solutions to~\eqref{2.ic}--\eqref{2.schv} obtained in Theorem~\ref{thm.exi}. Then, there exists $(u,v)$ such that, as
$(\eps_m,\eta_m)_{m\in\N}$ converges to $(0,0)$, we have, up to a subsequence,
\begin{align*}
    u_m &\rightharpoonup u \quad \mbox{weakly in }L^1(Q_T),\\
    v_m &\to v \quad \mbox{strongly in }L^2(Q_T),\\
    \nabla^m v_m &\rightharpoonup \nabla v \quad \mbox{weakly in }L^2(Q_T),\\
    \eps \pa_t v_m &\rightharpoonup 0 \quad \mbox{weakly in }L^2(Q_T).
\end{align*}
Furthermore, $(u,v)$ is a weak solution to the quasi-stationary system in the sense of Definition~\ref{def.weak}.
\end{theorem}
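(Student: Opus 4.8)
The strategy is to run the same compactness machinery as in the proof of Theorem~\ref{thm.conv}, but now keeping track of uniformity in $\eps_m$ as well as in $\eta_m$, and then to use the extra estimate on $\partial_t v$ (the discrete analogue of the continuous computation leading to~\eqref{estim.dtv}) together with hypothesis~\textbf{(H5)} to control the initial layer. First I would collect all the $\eps$-uniform a priori bounds: the mass conservation~\eqref{2.massu}--\eqref{2.massv}, the discrete entropy/dissipation inequality~\eqref{2.EI}, the discrete duality estimate (Proposition~\ref{prop.dual}, the discrete counterpart of~\eqref{1.dual}), and the discrete version of the combined entropy--duality lower bound~\eqref{1.lowerbound_ent}. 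Since all constants appearing there depend only on $\|u^0\|_{L^2}$, $\|v^0\|_{H^1}$, $T$, $\beta$, $\delta$ and the mesh regularity parameter $\zeta$ — and \textbf{(H5)} makes the initial data uniformly bounded in $L^2(\Omega)\times H^1(\Omega)$ — these yield: $u_m$ bounded in $L^\infty(0,T;L\log L)$ (hence equi-integrable in $L^1(Q_T)$), $v_m$ bounded in $L^\infty(0,T;H^1_\T)$, $\sqrt{u_m\gamma(v_m)}$ bounded in $L^2(0,T;H^1_\T)$ and $u_m\gamma(v_m)$ bounded in $L^2(Q_T)$, all uniformly in $m$.

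Next I would establish the key $\eps$-uniform estimate on $\partial_t v_m$, i.e. the discrete analogue of the argument in Section~\ref{sec.keyest}: discretely differentiate~\eqref{2.schv} in time, apply the discrete inverse Laplacian $\mathcal{K}_\T$ (well-defined on zero-mean piecewise constant functions thanks to mass conservation~\eqref{2.massu}), test against $\bigl(D_t v^n - \langle D_t v^n\rangle\bigr)$ where $D_t v^n=(v^n-v^{n-1})/\Delta t$, and use the discrete equation~\eqref{2.schu} together with the discrete integration by parts and a discrete Young inequality. This gives a discrete version of~\eqref{estim.dtv}, which summed over $n$ and combined with the $L^2(Q_T)$ bound on $u_m\gamma(v_m)$ and the boundedness of $\gamma$ produces a bound on $\|D_t v_m - \langle D_t v_m\rangle\|_{L^2(Q_T)}$ uniform in $\eps_m$ and $\eta_m$; the spatial-mean part $\langle D_t v_m\rangle$ is controlled explicitly from~\eqref{2.massv} by a discrete analogue of~\eqref{eq:evoldtV}, whose $L^2(0,T)$ norm is $O(1)$ precisely because \textbf{(H5)}, via~\eqref{cdtintau}, forces $\langle u^0\rangle-\beta\langle v^0\rangle=O(\sqrt{\eps_m})$. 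Hence $\partial_t v_m$ (interpreted as the piecewise-constant-in-time difference quotient) is bounded in $L^2(Q_T)$ uniformly, and therefore $\eps_m \partial_t v_m \to 0$ strongly (hence weakly) in $L^2(Q_T)$.

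With these bounds in hand I would extract subsequences: weak-$L^1$ convergence of $u_m$ to some nonnegative $u$ (Dunford--Pettis, using equi-integrability); for $v_m$, the $L^\infty(0,T;H^1_\T)$ bound plus the $L^2(Q_T)$ bound on the time difference quotient give, via a discrete Aubin--Lions–type compactness lemma (e.g. \cite{EGH00}-style, or Gallou\"et--Latch\'e), strong convergence $v_m\to v$ in $L^2(Q_T)$ with $v\in L^\infty(0,T;H^1(\Omega))$, and $\nabla^m v_m\rightharpoonup\nabla v$ weakly in $L^2(Q_T)$; the discrete gradient identification is the standard one from the convergence proof. Then I pass to the limit in the scheme written in the two weak formulations: in~\eqref{2.schu}, against a test function $\varphi$ with $\nabla\varphi\cdot\nu=0$ on $\partial\Omega$, the time-derivative term and the initial term converge by weak-$L^1$ convergence of $u_m$ and a.e. convergence of $u^0$; the flux term converges because $u_m\gamma(v_m)\rightharpoonup u\gamma(v)$ — here I need the product-limit argument, combining weak-$L^1$ convergence of $u_m$, strong-$L^2$ (hence a.e. up to subsequence) convergence of $v_m$ and continuity and boundedness of $\gamma$, exactly as in Theorem~\ref{thm.conv} — yielding~\eqref{1.weaku}. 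In~\eqref{2.schv}, multiplying through appropriately, the $\eps$-term vanishes because $\eps_m\partial_t v_m\to0$ in $L^2(Q_T)$ and the discretized initial term $\eps_m v^0$ also vanishes (using \textbf{(H4)}/\textbf{(H5)} boundedness), and the remaining terms ($\delta\nabla v$, $\beta v$, $u$) converge by the convergences just established, giving~\eqref{1.weakv} with $\eps=0$. Nonnegativity of the limit is inherited from the discrete positivity in Theorem~\ref{thm.exi}. Finally the regularity $v\in L^\infty(0,T;H^1(\Omega))$ and $u\in L^\infty(0,T;L^1(\Omega))$ required by Definition~\ref{def.weak} follow from lower semicontinuity of the norms under the respective weak/strong limits, and $u\gamma(v)\in L^2(Q_T)$ from the uniform $L^2$ bound; so $(u,v)$ is a weak solution of the quasi-stationary system.

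The main obstacle is the discrete $\partial_t v$ estimate: one must set up a discrete inverse Neumann-Laplacian $\mathcal{K}_\T$ on zero-mean functions, verify the discrete integration-by-parts and self-adjointness identities it satisfies (so that the telescoping "$\tfrac{\eps}{2}\tfrac{d}{dt}\int|\nabla\mathcal{K}[\partial_t v]|^2$" structure survives at the discrete level as a genuine telescoping sum in $n$), and carefully track that the resulting bound is uniform in $\eps_m$ — the delicate point being that the only place $\eps_m$ could blow things up, namely the mean $\langle D_t v_m\rangle$, is tamed exactly by~\eqref{cdtintau}. Everything else is a (careful but routine) repetition of the arguments already developed for Theorem~\ref{thm.conv}, now monitored for $\eps$-uniformity.
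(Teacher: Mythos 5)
Your proposal is correct and follows essentially the same route as the paper: the paper's Section~\ref{sec.AP} establishes exactly the discrete analogue of~\eqref{estim.dtv} you describe (Proposition~\ref{prop.dtv}, via the auxiliary discrete Neumann problem defining $z^n$, which is the same computation as testing with $\mathcal{K}_\T$ by symmetry), sums it and controls the mean $\langle w^n\rangle$ through the discrete mass relation and \textbf{(H5)} (Proposition~\ref{prop.massw}, Corollary~\ref{coro.estimdtv_unifeps}), and then reuses the $\eps$-uniform mass/entropy/duality bounds and the limit identification of Section~\ref{sec.limitscheme}, just as you propose. The only cosmetic difference is that you invoke a discrete Aubin--Lions argument for the strong $L^2$ compactness of $v_m$ where the paper relies on the Kolmogorov-type time-translate argument with the $\eps$-dependent estimate replaced by the new uniform one; these are interchangeable here.
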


\begin{remark}\label{rk:byproduct} Note that a by-product of Theorem~\ref{thm.AP} is the convergence (up to a subsequence) of the numerical scheme for $\eps=0$ under assumptions \emph{\textbf{(H1)}}--\emph{\textbf{(H4)}}.
Indeed, taking $(\eps_m)_{m\in \N}=0$ and some initial data $u^0$ in $L^2(\Omega)$, and defining $v^0$ as the solution of \eqref{1.eqv} with $\eps=0$ and $u$ replaced by $u^0$, we have that assumption \emph{\textbf{(H5)}} is automatically verified.
In fact, noting that when $\eps=0$ no initial data for $v$ is needed, see \eqref{1.IC}, assumption \emph{\textbf{(H5)}} becomes irrelevant.
\end{remark}

As a direct consequence of Theorem~\ref{thm.AP} and Proposition~\ref{prop.exiWS} with Remark~\ref{Rk.weakstong} we obtain the

\begin{corollary}\label{cor.AP}
Let the assumptions of Theorem~\ref{thm.AP} hold, with $\di=1$ or $2$. Furthermore, assume that $u^0$ belongs to $L^\infty(\Omega)$. Then, there exists $(u,v)$ such that the convergences in Theorem~\ref{thm.AP} hold for the whole sequence $(u_m,v_m)_{m \in \N}$ (and not only subsequences), and such that $(u,v)$ is a weak-strong solution of the quasi-stationary system in the sense of Definition~\ref{def.WSquasistationary}.
\end{corollary}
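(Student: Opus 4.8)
The plan is to combine Theorem~\ref{thm.AP}, which already yields subsequential convergence to \emph{some} weak solution of the quasi-stationary system, with the uniqueness of weak-strong solutions from Proposition~\ref{prop.exiWS} (together with the adaptation to polygonal domains explained in Remark~\ref{Rk.weakstong}). The standard subsequence-uniqueness argument is: if every subsequence of a sequence admits a further subsequence converging to the same limit $(u,v)$, then the whole sequence converges to $(u,v)$. So the two things to establish are (i) that the limit $(u,v)$ produced by Theorem~\ref{thm.AP} is in fact a weak-\emph{strong} solution, and (ii) that it satisfies the additional regularity~\eqref{1.reguniq} making it the \emph{unique} such solution, whence independence of the extracted subsequence.

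For step (i), I would go back to the convergences provided by Theorem~\ref{thm.AP}: $v_m\to v$ strongly in $L^2(Q_T)$, $\nabla^m v_m\rightharpoonup\nabla v$ weakly in $L^2(Q_T)$, $u_m\rightharpoonup u$ weakly in $L^1(Q_T)$, and $\eps_m\partial_t v_m\rightharpoonup 0$ weakly in $L^2(Q_T)$. Passing to the limit in the discrete version of~\eqref{2.schv} — tested against a fixed smooth function — kills the $\eps_m$-time-derivative term and the discrete $v$-time-derivative term (its $L^2$ bound carries a factor $\eps_m\to0$ from the entropy dissipation~\eqref{2.defdiss}), while the discrete Laplacian of $v_m$ converges (using consistency of the TPFA gradient reconstruction~\eqref{def.nab}, already handled in the proof of Theorem~\ref{thm.conv}) to $\delta\Delta v$, and the zeroth-order terms converge to $\beta v - u$. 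This shows $\delta\Delta v - \beta v + u = 0$ holds in $\mathcal D'(Q_T)$; since $u\in L^2(Q_T)$ (this $L^2$-in-space bound on $u$ is exactly what the discrete duality estimate~\eqref{1.dual}/Proposition~\ref{prop.dual} delivers uniformly, and it passes to the limit by weak lower semicontinuity) and $v\in L^\infty(0,T;H^1(\Omega))$, elliptic regularity on the polygonal domain (via~\cite{Grisvard} as in Remark~\ref{Rk.weakstong}) upgrades this to an identity in $L^2(Q_T)$, so $(u,v)$ is a weak-strong solution in the sense of Definition~\ref{def.WSquasistationary}.

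For step (ii), I need $u\in L^\infty(0,T;L^q(\Omega))$ for some $q>2$, which is precisely the regularity hypothesis~\eqref{1.reguniq} in Proposition~\ref{prop.exiWS}. Here the extra assumption $u^0\in L^\infty(\Omega)$ is used: by the bootstrap argument of Remark~\ref{Rk.weakstong} (combining the entropy bound $u\in L^\infty(0,T;L\log L)$, the regularity results~\cite{Grog93} and~\cite[Theorem 4.3.1.4]{Grisvard} giving $v\in L^\infty(Q_T)\cap L^2(0,T;H^2(\Omega))$, the duality estimate giving $u\in L^2(Q_T)$, and finally~\cite[Lemma 4.2]{BLT21}) one obtains $u\in L^\infty(0,T;L^q(\Omega))$ for any $q\ge2$, in dimensions $\di=1,2$. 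Hence the weak-strong solution is unique. Since this uniqueness does not depend on which subsequence was extracted in Theorem~\ref{thm.AP}, the Urysohn subsequence principle gives convergence of the \emph{whole} sequence $(u_m,v_m)_{m\in\N}$ in the topologies listed there.

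The main obstacle, I expect, is not any single deep estimate — all the heavy lifting (discrete duality, discrete entropy, compactness, and the regularity bootstrap) is either proved earlier in the paper or imported from~\cite{BLT21} — but rather making the passage to the limit in the discrete $v$-equation clean enough to conclude the \emph{strong} form $\delta\Delta v-\beta v+u=0$ in $L^2(Q_T)$ rather than merely the distributional identity, and checking carefully that the regularity~\eqref{1.reguniq} genuinely holds in the limit so that Proposition~\ref{prop.exiWS} applies. Everything else is the routine Urysohn/uniqueness packaging.
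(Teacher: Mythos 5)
Your proposal matches the paper's intended argument: the paper gives no separate proof, presenting the corollary as a direct consequence of Theorem~\ref{thm.AP} (subsequential convergence to a weak solution), the regularity bootstrap of Remark~\ref{Rk.weakstong} (which upgrades that limit to a weak-strong solution satisfying~\eqref{1.reguniq}, using $u^0\in L^\infty(\Omega)$ and $\di=1,2$), the uniqueness of Proposition~\ref{prop.exiWS}, and the Urysohn subsequence principle --- exactly your packaging. One small caution: the duality estimate controls $u\sqrt{\gamma(v)}=u\,e^{-v/2}$ in $L^2(Q_T)$, not $u$ itself, so $u\in L^2(Q_T)$ only follows \emph{after} \cite{Grog93} yields $v\in L^\infty(Q_T)$ (the order used in Remark~\ref{Rk.weakstong}), which your step (ii) respects even though your step (i) invokes it prematurely.
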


\section{Existence proof and entropy diminishing property of the scheme}\label{sec.exiproof}

This section is devoted to the proof of Theorem~\ref{thm.exi}. Our strategy is the following. For $\omega \in (0,1)$ given, let us first define the regularized initial data
\begin{align}\label{def.regIC}
u^{0,\omega}_K := u^0_K + \omega, \quad \forall K \in \T.
\end{align}
Then, starting from $(u^{0,\omega},v^0)$ we establish the existence of a (unique) positive solution $(u^{n,\omega},v^{n,\omega})$ to the scheme~\eqref{2.schu}--\eqref{2.schv}, for which we can show mass and entropy estimates. In order to conclude the proof, it will remain to prove some uniform w.r.t. $\omega$ estimates and to pass to the limit $\omega \to 0$ both in the scheme and in the entropy estimate. Finally, we will use this later inequality to prove that the solutions to the scheme~\eqref{2.ic}--\eqref{2.schv} are positive. First, our results on the scheme with regularized initial data are presented in the following lemma.

\begin{lemma}\label{lem.regIC} Assume \emph{\textbf{(H1)}--\textbf{(H4)}}, and for $\omega\in(0,1)$ let $u^{0,\omega}_K$ be defined by~\eqref{def.regIC}. Then, for any $1\leq n \leq N_T$, there exists a unique positive solution $(u^{n,\omega},v^{n,\omega})$ to the scheme~\eqref{2.schu}--\eqref{2.schv} with initialization $(u^{0,\omega}_K,v^0_K)$. Moreover it satisfies the following properties:
\begin{itemize}
\item Mass estimates: for all $n\in \{1,\ldots,N_T\}$ we have
\begin{align}\label{3.massu}
\langle u^{n,\omega} \rangle = \langle u^{0,\omega} \rangle =  \langle u^0 \rangle + \omega,
\end{align}
and
\begin{align}\label{3.massv}
\langle v^{n,\omega}\rangle = \frac{\langle u^0 \rangle+\omega }{\beta}  +  \left(\dfrac{\eps}{\eps+\beta \, \Delta t}\right)^n \left(\langle v^0\rangle - \frac{\langle u^0 \rangle + \omega}{\beta}\right).
\end{align}
\item Entropy dissipation inequality: for all $n\in \{1,\ldots,N_T\}$ it holds
\begin{multline}\label{3.EIeps}
H(u^{n,\omega},v^{n,\omega}) + \Delta t \sum_{\substack{\sigma \in \E_{{\rm int}}\\\sigma = K|L}} \tau_\sigma \, \Dd_{K,\sigma} (u^{n,\omega} \gamma(v^{n,\omega})) \, \Dd_{K,\sigma} \log(u^{n,\omega} \gamma(v^{n,\omega}))\\ +\eps \,\Delta t \sum_{K \in \T} \m(K) \left|\frac{v^{n,\omega}_K - v^{n-1,\omega}_K}{\Delta t}\right|^2 \leq H(u^{n-1,\omega},v^{n-1,\omega}),
\end{multline}
where $H$ is defined in~\eqref{2.defent}.
\end{itemize}
\end{lemma}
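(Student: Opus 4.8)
The lemma combines three assertions — existence/uniqueness of a positive solution of the regularized scheme, the two mass identities, and the entropy inequality — so I would treat them in that order. For the existence part, the key observation is that the scheme is linearly implicit: equations \eqref{2.schv} for $v^{n,\omega}$ decouple from \eqref{2.schu} and involve only the matrix $\Mv$, which does not depend on the unknowns at all. So I would first show $\Mv$ is invertible with nonnegative solution: $\Mv$ is, by inspection of its entries, an $M$-matrix (strictly diagonally dominant with respect to columns since $(\Mv)_{K,K} = \m(K)(\eps+\Delta t\beta) + \delta\Delta t\sum_{\sigma\in\E_{\mathrm{int},K}}\tau_\sigma$ strictly dominates $\sum_{L\neq K}|(\Mv)_{K,L}| = \delta\Delta t\sum_{\sigma\in\E_{\mathrm{int},K}}\tau_\sigma$ by the $\eps+\Delta t\beta>0$ slack from the zeroth-order term), hence an irreducibly diagonally dominant $Z$-matrix, so $\Mv^{-1}\geq 0$ entrywise. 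Since $b_v^{n-1}=(\m(K)(\eps v^{n-1}+\Delta t u^{n-1}))_K$ is nonnegative whenever $u^{n-1},v^{n-1}\geq 0$ — and strictly positive because $u^{0,\omega}_K=u^0_K+\omega>0$ propagates (this is exactly why we regularize) — we get $v^{n,\omega}>0$. Then, with $v^{n,\omega}$ known and positive, the matrix $\Mu$ is well-defined, and by the same $M$-matrix argument (the diagonal entry $\m(K)+\Delta t\sum_{\sigma\in\E_{\mathrm{int},K}}\tau_\sigma\gamma(v^n_K)$ dominates the off-diagonal sum $\Delta t\sum_{\sigma\in\E_{\mathrm{int},K}}\tau_\sigma\gamma(v^n_L)$ strictly thanks to the $\m(K)>0$ slack, using $\gamma>0$) it is invertible with nonnegative inverse; since $b_u^{n-1}=(\m(K)u^{n-1,\omega}_K)_K>0$ we obtain $u^{n,\omega}>0$. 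Induction on $n$ closes this step.

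\textbf{Mass identities.} These are the discrete analogues of \eqref{1.massu}--\eqref{1.massv} and follow by summing the scheme over $K\in\T$. Summing \eqref{2.schu} over all $K$, the flux terms $\sum_{K}\sum_{\sigma\in\E_K}\tau_\sigma\Dd_{K,\sigma}u^n\gamma(v^n)$ telescope to zero (each interior edge $\sigma=K|L$ contributes $\tau_\sigma\Dd_{K,\sigma}(\cdot)$ from $K$ and $\tau_\sigma\Dd_{L,\sigma}(\cdot)=-\tau_\sigma\Dd_{K,\sigma}(\cdot)$ from $L$, and exterior edges contribute nothing since $\Dd_{K,\sigma}w=0$ there), giving $\sum_K\m(K)u^{n,\omega}_K=\sum_K\m(K)u^{n-1,\omega}_K$, hence \eqref{3.massu} by induction and the definition of the discretized initial data. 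For \eqref{3.massv}, summing \eqref{2.schv} over $K$ kills the flux term the same way and leaves $\eps\m(\Omega)(\langle v^{n,\omega}\rangle-\langle v^{n-1,\omega}\rangle)/\Delta t = \m(\Omega)(\langle u^{n-1,\omega}\rangle - \beta\langle v^{n,\omega}\rangle) = \m(\Omega)(\langle u^0\rangle+\omega - \beta\langle v^{n,\omega}\rangle)$ using \eqref{3.massu}; this is a linear recurrence $\langle v^{n,\omega}\rangle = \frac{\eps}{\eps+\beta\Delta t}\langle v^{n-1,\omega}\rangle + \frac{\beta\Delta t}{\eps+\beta\Delta t}\cdot\frac{\langle u^0\rangle+\omega}{\beta}$ whose solution, with $\frac{\langle u^0\rangle+\omega}{\beta}$ as fixed point, is exactly \eqref{3.massv}.

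\textbf{Entropy inequality.} This is the heart of the lemma and the step I expect to be the main obstacle, as it is the discrete transcription of the computation behind \eqref{1.EI}. The idea is to test \eqref{2.schu} with $\log(u^{n,\omega}\gamma(v^{n,\omega}))_K = \log u^{n,\omega}_K - v^{n,\omega}_K$ — note $\gamma(s)=e^{-s}$ so $\log\gamma(v)=-v$ — and to test \eqref{2.schv} with $(v^{n,\omega}_K-v^{n-1,\omega}_K)/\Delta t$, then add. On the left-hand side, the $u$-equation tested against $\log u^n_K$ produces, via convexity of $h$ and the inequality $h(a)-h(b)\leq (a-b)\log a = (a-b)h'(a)$, the increment $\sum_K\m(K)(h(u^{n,\omega}_K)-h(u^{n-1,\omega}_K))/\Delta t$ up to a nonnegative remainder; tested against $-v^n_K$ it produces (after discrete integration by parts and using the $v$-equation to substitute for $u^{n-1}_K$) the $-u^nv^n$ and $\frac{\beta}{2}|v^n|^2$ and $\frac{\delta}{2}\tau_\sigma(\Dd_\sigma v^n)^2$ increments, again modulo nonnegative terms (the $\frac{\beta}{2}$ and $\frac{\delta}{2}$ squared-difference terms $\frac{\beta}{2}(v^n_K-v^{n-1}_K)^2$ etc., which one discards). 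The right-hand side of the $u$-equation, after discrete integration by parts, becomes $-\Delta t\sum_{\sigma=K|L\in\E_{\mathrm{int}}}\tau_\sigma\Dd_{K,\sigma}(u^n\gamma(v^n))\,\Dd_{K,\sigma}\log(u^n\gamma(v^n))$, which is nonpositive since $\log$ is increasing (so $\Dd_{K,\sigma}w$ and $\Dd_{K,\sigma}\log w$ have the same sign), and the $\eps$-term yields $\eps\sum_K\m(K)|v^n_K-v^{n-1}_K|^2/\Delta t$. Collecting everything, keeping the flux dissipation term and the $\eps$-term on the left and throwing away all the other nonnegative leftover terms, gives precisely \eqref{3.EIeps}. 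The delicate bookkeeping is: (i) correctly handling the cross term $-u^nv^n$ versus using \eqref{2.schv} to eliminate $u^{n-1}$ — one must be careful that it is $u^{n-1}$ (explicit) that appears in the $v$-equation, which is why the IMEX structure is compatible with the entropy — and (ii) making sure every discarded term genuinely has the right sign, in particular the second-order $\frac{\delta}{2}(\Dd_\sigma(v^n-v^{n-1}))^2$-type term that arises from the discrete $|\nabla v|^2$ increment. I would carry out this computation following the continuous template \eqref{1.EI}--\eqref{1.dual} and the references \cite{BLT21,FuJi21a,JiWa20} cited by the authors, adapting each integration by parts to its discrete summation-by-parts counterpart on the mesh.
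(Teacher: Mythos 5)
Your overall route is the same as the paper's: an M-matrix argument for $\Mv$ and then for $\Mu$ with induction in $n$, the mass identities by summing the scheme over the cells and solving the resulting linear recurrence for $\langle v^{n,\omega}\rangle$, and the entropy inequality by testing~\eqref{2.schu} with $\log(u^{n,\omega}\gamma(v^{n,\omega}))$ and~\eqref{2.schv} with the time increment of $v$, using the convexity of $h$, the inequality $a^2-b^2\le 2a(a-b)$, and the fact that the IMEX structure (with $u^{n-1}$ explicit in the $v$-equation) makes the two cross terms telescope into the increment of $-\sum_K \m(K)u_K v_K$; all of this matches the paper's proof.

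There is, however, one concrete flaw in your well-posedness step: the diagonal-dominance justification for $\Mu$ is false as stated. The off-diagonal entries of row $K$ are $-\Delta t\,\tau_\sigma\,\gamma(v^n_L)$ for $\sigma=K|L$, so your claimed inequality is $\m(K)+\Delta t\sum_{\sigma\in\E_{{\rm int},K}}\tau_\sigma\gamma(v^n_K) > \Delta t\sum_{\sigma=K|L}\tau_\sigma\gamma(v^n_L)$, and the $\m(K)$ ``slack'' does not give it: since $\gamma(s)=e^{-s}$, the ratio $\gamma(v^n_L)/\gamma(v^n_K)=e^{v^n_K-v^n_L}$ can be arbitrarily large, so for a fixed mesh and time step the inequality fails as soon as $v^n_K-v^n_L$ is large enough — and no smallness condition on $\Delta t$ is available, since the scheme is meant to be unconditionally well-posed. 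The correct statement, and the one the paper uses, is strict diagonal dominance of $\Mu$ with respect to its \emph{columns}: the off-diagonal entries of column $K$ are $(\Mu)_{L,K}=-\Delta t\,\tau_\sigma\,\gamma(v^n_K)$, all carrying the same factor $\gamma(v^n_K)$ as the diagonal, so the column sum of their moduli equals $\Delta t\,\gamma(v^n_K)\sum_{\sigma\in\E_{{\rm int},K}}\tau_\sigma$, which is below the diagonal entry by exactly $\m(K)>0$. Since $\Mu$ is a Z-matrix with positive diagonal, column dominance (equivalently, row dominance of $\Mu^{T}$) still yields the nonsingular M-matrix property and the entrywise nonnegative inverse, so your conclusion survives once this is repaired. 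Note that for $\Mv$ you also wrote ``columns'' while computing a row sum; there it is harmless because $\Mv$ is symmetric — it is precisely for the nonsymmetric $\Mu$ that the row/column distinction is the whole point.
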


\begin{proof}
We divide the proof into three steps.\\

\noindent{\it Step 1. Well-posedness of the scheme.} In the following we argue by induction. If $n=1$, we notice that all the diagonal entries of $\Mv$ are strictly positive, whereas the extra-diagonal coefficients are nonpositive. Besides, for a given $K \in \T$ it holds
\begin{align*}
\sum_{L \in \T} (\Mv)_{K,L} = \m(K)(\eps+\Delta t \beta) > 0.
\end{align*}
Therefore the matrix $\Mv$ is strictly diagonally dominant with respect to its rows and we conclude that $\Mv$ is a M-matrix. In particular $\Mv$ is invertible and, since all the components of the vectors $b^{0}_v$ are positive, we deduce that there exists a unique vector $v^{1,\omega}$ with positive components satisfying~\eqref{2.schv}. Similarly, one can show that $\mathbb{M}_u^1$ is strictly diagonally dominant with respect to its columns. Since in $\mathbb{M}_u^1$ all the diagonal entries are strictly positive, whereas the extra-diagonal coefficients are nonpositive, we conclude that $\mathbb{M}_u^1$ is also a M-matrix. This implies the existence of a unique positive vector $u^{1,\omega}$ solution to~\eqref{2.schu}. Finally, arguing recursively we obtain the existence of a unique positive solution $(u^{n,\omega},v^{n,\omega})$ to the scheme~\eqref{2.schu}--\eqref{2.schv} for all $1 \leq n \leq N_T$.\\

\noindent{\it Step 2. Mass estimates.} Now, we observe that by summing~\eqref{2.schu} over $K \in \T$ we have by construction
\begin{align*}
\sum_{K \in \T} \m(K) u^{n,\omega}_K = \sum_{K\in \T} \m(K) u^{n-1,\omega}_K,
\end{align*}
such that~\eqref{3.massu} holds.
Similarly, we sum the equation~\eqref{2.schv} over $K \in \T$ and we obtain
\begin{multline*}
\eps \m(\Omega) \langle v^{n,\omega}\rangle - \eps \, \m(\Omega) \frac{\langle u^0 \rangle+\omega}{\beta} = \eps \, \m(\Omega)\, \langle v^{n-1,\omega}\rangle\\ - \eps \, \m(\Omega) \frac{\langle u^0 \rangle+\omega}{\beta} + \Delta t \, \beta \left(\m(\Omega) \frac{\langle u^0 \rangle+\omega}{\beta}-\m(\Omega) \, \langle v^{n,\omega}\rangle \right),
\end{multline*}
so that
\begin{align*}
(\eps+\Delta t \, \beta) \left( \langle v^{n,\omega}\rangle  - \frac{\langle u^0 \rangle+\omega}{\beta} \right) = \eps \langle v^{n-1,\omega}\rangle  - \eps \frac{\langle u^0 \rangle+\omega}{\beta}.
\end{align*}
Hence
\begin{align*}
    \langle v^{n,\omega}\rangle - \frac{\langle u^0 \rangle+\omega}{\beta} =  \dfrac{\eps}{\eps+\beta \, \Delta t} \left(\langle v^{n-1,\omega}\rangle - \frac{\langle u^0 \rangle + \omega}{\beta}\right),
\end{align*}
and we conclude~\eqref{3.massv} by a direct induction.
\medskip

\noindent{\it Step 3. Entropy dissipation inequality.} Let $1\leq n \leq N_T$ be fixed, we multiply~\eqref{2.schu} by $\Delta t \log(u^{n,\omega}_K \gamma(v^{n,\omega}_K)) = \Delta t ( \log(u^{n,\omega}_K) - v^{n,\omega}_K)$, we sum over $K \in \T$ and we use a discrete integration by parts to obtain
\begin{multline*}
\sum_{K \in \T} \m(K) (u^{n,\omega}-u^{n-1,\omega}) \,(\log(u^{n,\omega}_K)-v^{n,\omega}_K) \\+ \Delta t \sum_{\substack{\sigma \in \E_{{\rm int}}\\\sigma = K|L}} \tau_\sigma \, \Dd_{K,\sigma} (u^{n,\omega} \gamma(v^{n,\omega})) \, \Dd_{K,\sigma} \log(u^{n,\omega} \gamma(v^{n,\omega}))=0.
\end{multline*}
Using the convexity of the function $h(x)= x(\log(x)-1)+1$ we get
\begin{multline}\label{3.EI1}
\sum_{K\in \T} \m(K) \left(h(u^{n,\omega}_K)-h(u^{n-1,\omega}_K)\right) - \sum_{K\in \T}\m(K)(u^{n,\omega}_K-u^{n-1,\omega}_K) v^{n,\omega}_K \\+ \Delta t \sum_{\substack{\sigma \in \E_{{\rm int}}\\\sigma = K|L}} \tau_\sigma \, \Dd_{K,\sigma} (u^{n,\omega} \gamma(v^{n,\omega})) \, \Dd_{K,\sigma} \log(u^{n,\omega} \gamma(v^{n,\omega})) \leq 0.
\end{multline}
Now, we multiply the equation~\eqref{2.schv} by $(v^{n,\omega}_K-v^{n-1,\omega}_K)$, we sum over $K \in \T$ and we use a discrete integration by parts to get
\begin{multline*}
\eps\sum_{K \in \T} \m(K) \frac{(v^{n,\omega}_K - v^{n-1,\omega}_K)^2}{\Delta t}  +  \delta \sum_{\substack{\sigma \in \E_{{\rm int}}\\\sigma = K|L}} \tau_\sigma \Dd_{K,\sigma} v^{n,\omega} \, \Dd_{K,\sigma} (v^{n,\omega}-v^{n-1,\omega}) \\+ \beta \sum_{K\in\T} \m(K) v^{n,\omega}_K (v^{n,\omega}_K-v^{n-1,\omega}_K) - \sum_{K\in \T} \m(K) u^{n-1,\omega}_K (v^{n,\omega}_K - v^{n-1,\omega}_K) = 0.
\end{multline*}
Applying the classical inequality $a^2-b^2 \leq 2a(a-b)$ for all $a$ and $b \in \R$ to the second and third terms of the l.h.s. yields
\begin{multline}\label{3.EI2}
\eps\sum_{K \in \T} \m(K) \frac{(v^{n,\omega}_K - v^{n-1,\omega}_K)^2}{\Delta t}  +  \frac{\delta}{2} \sum_{\substack{\sigma \in \E_{{\rm int}}\\\sigma = K|L}} \tau_\sigma \left( (\Dd_\sigma v^{n,\omega})^2 - (\Dd_\sigma v^{n-1,\omega})^2 \right)\\
+ \frac{\beta}{2} \sum_{K\in\T} \m(K) \left(|v^{n,\omega}_K|^2-|v^{n-1,\omega}_K|^2\right) -  \sum_{K\in \T} \m(K) u^{n-1,\omega}_K (v^{n,\omega}_K - v^{n-1,\omega}_K) \leq 0.
\end{multline}
It remains to sum~\eqref{3.EI1} and~\eqref{3.EI2} and we end up with~\eqref{3.EIeps}. 
\end{proof}

It now remains to pass to the limit $\omega \to 0$ and to show the positivity of the limit in order to prove Theorem~\ref{thm.exi}.

\begin{proof}[Proof of Theorem~\ref{thm.exi}]
For $\omega \in (0,1)$, we consider the regularized solution $(u^{n,\omega},v^{n,\omega})$ built in Lemma~\ref{lem.regIC}. Since $\omega \in (0,1)$, we deduce thanks to~\eqref{3.massu} and~\eqref{3.massv}, that there exists a constant $C>0$ independent of $\omega$ (but which depends on $\T$) such that
\begin{align}\label{bound_th4}
0 < u^{n,\omega}_K, \, v^{n,\omega}_K \leq C, \qquad \forall K \in \T, \, n=1,\ldots,N_T.
\end{align}
Thereby, for any $1\leq n \leq N_T$, there exists $(u^n,v^n)$ such that, up to a subsequence, as $\omega \to 0$ we have
\begin{align*}
u^{n,\omega}_K, \, v^{n,\omega}_K \to u^n_K, \, v^n_K \geq 0, \qquad \forall K \in \T.
\end{align*}
We conclude that $(u^n,v^n)$ is a solution to the scheme~\eqref{2.schu}--\eqref{2.schv}. Besides, the linear structure of the scheme allows us to deduce that $(u^n,v^n)$ is the unique solution to the scheme and that in fact, as $\omega \to 0$, the whole sequence $(u^{n,\omega},v^{n,\omega})$ converges toward $(u^n,v^n)$. We now pass to the limit in~\eqref{3.massu} and~\eqref{3.massv} in order to obtain the mass estimates~\eqref{2.massu} and~\eqref{2.massv}. {Moreover, a first consequence of~\eqref{3.EIeps} and the uniform, with respect to $\omega$, bound~\eqref{bound_th4}, is that for all $\sigma \in \E_{{\rm int}}$ with $\sigma = K|L$, we have}
\begin{multline*}
    {\limsup_{\omega\longrightarrow0} \Big\{ (u^{n,\omega}_K\gamma(v^{n,\omega}_K)-u^{n,\omega}_L \gamma(v^{n,\omega}_L)) \, (\log(u^{n,\omega}_K\gamma(v^{n,\omega}_K))-\log(u^{n,\omega}_L \gamma(v^{n,\omega}_L)))\Big\}}\\ {\leq H(u^{n-1},v^{n-1}) + C^2 \, \m(\Omega).}
\end{multline*}
This implies that $u^n$ is positive. Indeed, by contradiction let us assume that there exists $K \in \T$ with $u^n_K=0$. Then, the previous inequality implies that any cell $L \in \Ne(K)$ should satisfy $u^n_L=0$. Then, repeating this argument over all $K \in \T$ we conclude that $u^n_K = 0$ for all $K\in\T$. This contradicts the mass estimate~\eqref{2.massu}. Hence, $u^n$ is a positive vector for all $1\leq n \leq N_T$ and since $\mathbb{M}_v$ is a M-matrix we deduce that $v^n$ is also a positive vector for all $1\leq n \leq N_T$.

We can finally pass to the limit $\omega \to 0$ in~\eqref{3.EIeps} and obtain
\begin{multline*}
H(u^n,v^n) + \Delta t \sum_{\substack{\sigma \in \E_{{\rm int}}\\\sigma = K|L}} \tau_\sigma \, \Dd_{K,\sigma} (u^n \gamma(v^n)) \, \Dd_{K,\sigma} \log(u^n \gamma(v^n))\\ +\eps \,\Delta t \sum_{K \in \T} \m(K) \left|\frac{v^n_K - v^{n-1}_K}{\Delta t}\right|^2 \leq H(u^{n-1},v^{n-1}).
\end{multline*}
We then apply the elementary inequality $4(\sqrt{a}-\sqrt{b})\leq (a-b)(\log(a)-\log(b))$ for all $a$, $b>0$ and we obtain the desired entropy inequality~\eqref{2.EI}.
\end{proof}

\section{Uniform estimates}\label{sec.unifest}

In this section we establish some uniform estimates on the solutions to the scheme~\eqref{2.schu}--\eqref{2.schv}. The main idea, as exposed formally in Section~\ref{sec.keyest}, is to combine the entropy inequality~\eqref{2.EI} together with a duality estimate. Following~\cite[Section 2.1]{BLT21}, we first bound from below the term $-uv$ appearing in the definition of the entropy functional. { Roughly speaking, this is done by estimating the $(H^1)'(\Omega)$ norm of $u-\langle u \rangle$ thanks to the relation \eqref{eq:identity_Hdual}.}
Thereby, we need to adapt this approach at the discrete level. For this purpose, we introduce some discrete counterpart $N$ of the norm $\|\cdot\|_{(H^1)'(\Omega)}$. We first define the subset $\widetilde{\Hi}_\T$ of $\Hi_\T$ as
\begin{align*}
\widetilde{\Hi}_\T = \left\lbrace w \in \Hi_\T \, \, : \, \, \langle w \rangle = 0 \right\rbrace.
\end{align*}
We then define the map $N : \widetilde{\Hi}_\T \to \R_+$ by
\begin{align*}
N(w) = \left(\sum_{\sigma \in \E} \tau_\sigma (\Dd_\sigma z)^2\right)^{1/2} = |z|_{1,2,\T} ,
\end{align*}
where the vector $z$ is the unique solution in $\widetilde{\Hi}_\T$ to
\begin{align}\label{4.aux1}
-\sum_{\sigma \in \E_K} \tau_\sigma \Dd_{K,\sigma} z = \m(K) w_K, \quad \forall K \in \T.
\end{align}
The existence and uniqueness of such $z$ are guaranteed by~\cite[Theorem 2.5]{ChDr11}. One can easily check that $N$ is a norm on $\widetilde{\Hi}_\T$. More precisely, we have the

\begin{lemma}\label{lem.dual}
Let $\T$ be an admissible mesh of $\Omega$. Then, for all $w\in \widetilde{\Hi}_\T$, we have
\begin{align*}
N(w)=
 \sup\bigg\{\int_\Omega w \theta\, \dist x: \theta \in\widetilde{\Hi}_\T,\
	|\theta|_{1,2,\T}=1\bigg\}.
\end{align*}
\end{lemma}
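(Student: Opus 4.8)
The plan is to recognize this identity as the discrete transcription of the variational characterization of a dual norm, and to prove it by establishing the two inequalities separately. The only structural ingredient is the discrete integration-by-parts formula: for any $\theta,\varphi\in\Hi_\T$,
\begin{align*}
-\sum_{K\in\T}\theta_K\sum_{\sigma\in\E_K}\tau_\sigma\,\Dd_{K,\sigma}\varphi
=\sum_{\substack{\sigma\in\Eint\\\sigma=K|L}}\tau_\sigma\,\Dd_{K,\sigma}\varphi\,\Dd_{K,\sigma}\theta,
\end{align*}
which follows by collecting the left-hand side edge by edge, using that $\Dd_{K,\sigma}\varphi=0$ for $\sigma\in\E_{{\rm ext},K}$ and that the product $\Dd_{K,\sigma}\varphi\,\Dd_{K,\sigma}\theta$ does not depend on the orientation chosen for an interior edge $\sigma$. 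Applying this with $\varphi=z$, where $z\in\widetilde{\Hi}_\T$ is the solution of~\eqref{4.aux1}, and inserting~\eqref{4.aux1} itself, one obtains for every $\theta\in\widetilde{\Hi}_\T$ the key identity
\begin{align*}
\int_\Omega w\,\theta\,\dist x=\sum_{K\in\T}\m(K)\,w_K\,\theta_K
=\sum_{\substack{\sigma\in\Eint\\\sigma=K|L}}\tau_\sigma\,\Dd_{K,\sigma}z\,\Dd_{K,\sigma}\theta.
\end{align*}

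For the inequality $N(w)\ge\sup\{\,\cdots\,\}$, I would apply the Cauchy--Schwarz inequality to the right-hand side of this identity, which yields $\big|\int_\Omega w\,\theta\,\dist x\big|\le|z|_{1,2,\T}\,|\theta|_{1,2,\T}=N(w)\,|\theta|_{1,2,\T}$ (using that $\Dd_\sigma z=0$ and $\Dd_\sigma\theta=0$ on exterior edges, so that the sums over $\Eint$ and over $\E$ coincide), and then take the supremum over $\theta\in\widetilde{\Hi}_\T$ with $|\theta|_{1,2,\T}=1$.

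For the reverse inequality, I would distinguish two cases. If $w=0$, then $z=0$ by uniqueness in~\eqref{4.aux1}, so $N(w)=0$ and the supremum is clearly $0$. If $w\neq0$, then~\eqref{4.aux1} forces $z\neq0$, hence $|z|_{1,2,\T}>0$ since $|\cdot|_{1,2,\T}$ is a genuine norm on $\widetilde{\Hi}_\T$ (a consequence of the connectedness of $\Omega$ combined with the zero-mean constraint, which is also what underlies the well-posedness of~\eqref{4.aux1} borrowed from~\cite[Theorem 2.5]{ChDr11}). Then the choice $\theta=z/|z|_{1,2,\T}$, which lies in $\widetilde{\Hi}_\T$ and satisfies $|\theta|_{1,2,\T}=1$, turns the key identity into $\int_\Omega w\,\theta\,\dist x=|z|_{1,2,\T}=N(w)$, so that the supremum is at least $N(w)$. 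Combining the two bounds gives equality.

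I do not anticipate any real obstacle: this is the discrete analogue of the Riesz-type identity $\|f\|_{(H^1)'(\Omega)}=\sup\{\int_\Omega f\theta\,\dist x:\langle\theta\rangle=0,\ \|\nabla\theta\|_{L^2(\Omega)}=1\}$ already recalled in Section~\ref{sec.keyest}. The only points deserving a little care are the orientation and boundary-edge bookkeeping in the discrete integration by parts, and checking that $|\cdot|_{1,2,\T}$ is a norm (not merely a seminorm) on $\widetilde{\Hi}_\T$, which is needed for the normalized test function $\theta=z/|z|_{1,2,\T}$ to be well defined and admissible.
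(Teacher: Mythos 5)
Your proposal is correct and follows essentially the same route as the paper: multiply~\eqref{4.aux1} by $\theta_K$, sum, use discrete summation by parts and Cauchy--Schwarz for the upper bound, then take the normalized test function $\theta=z/N(w)$ for the lower bound. The only (harmless) difference is that you treat the degenerate case $w=0$ separately and spell out why $|\cdot|_{1,2,\T}$ is a norm on $\widetilde{\Hi}_\T$, details the paper leaves implicit.
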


\begin{proof}
Let $w\in \widetilde{\Hi}_\T$ and $\theta \in \widetilde{\Hi}_\T$. We multiply the equation~\eqref{4.aux1} by $\theta_K$, we sum over $K \in \T$, we apply a discrete integration by parts and Cauchy-Schwarz inequality to obtain
\begin{align*}
\int_\Omega w \theta \, \dist x = \sum_{K \in \T} \m(K) \, w_K \, \theta_K = \sum_{\substack{\sigma \in \E_{{\rm int}}\\\sigma=K|L}} \tau_\sigma \Dd_{K,\sigma} z \, \Dd_{K,\sigma} \theta \le N(w)\, |\theta|_{1,2,\T}.
\end{align*}
Then, picking $\theta_K=z_K/N(w)$ for all $K\in\T$, we have that $\theta \in \widetilde{\Hi}_\T$, $|\theta|_{1,2,\T}=1$ and
\begin{align*}
\int_\Omega w \theta \, \dist x = \frac{1}{N(w)} \sum_{\sigma \in \E} \tau_\sigma (\Dd_\sigma z)^2 = N(w).
\end{align*}
This concludes the proof of Lemma~\ref{lem.dual}.
\end{proof}

Equipped with this norm, we now prove the following estimate, which can be seen as the discrete counterpart of~\eqref{1.dual}.

\begin{proposition}[Duality estimate]\label{prop.dual}
Assume \emph{\textbf{(H1)}--\textbf{(H4)}} and let $\D$ be an admissible mesh satisfying~\eqref{2.dd}. Then, there exists a constant $C>0$ only depending on $\Omega$ and $\zeta$ such that
\begin{equation}\label{4.estdual}\begin{split}
\max_{1\leq n \leq N_T} N(u^n-\langle u^0 \rangle)^2 & + 2\sum_{n=1}^{N_T} \Delta t \, \|u^n\sqrt{\gamma(v^n)}\|^2_{L^2(\Omega)} \\
& \leq C \|u^0-\langle u^0 \rangle\|^2_{L^2(\Omega)} + 2\, T \, \m(\Omega) \, \langle u^0 \rangle^2.
\end{split}\end{equation}

\end{proposition}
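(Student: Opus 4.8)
The plan is to mimic at the discrete level the continuous computation leading to~\eqref{1.dual}. Fix $n\in\{1,\dots,N_T\}$ and let $z^n\in\widetilde{\Hi}_\T$ be the solution of the discrete Neumann problem~\eqref{4.aux1} with right-hand side $w=u^n-\langle u^0\rangle$; this is legitimate since $\langle u^n-\langle u^0\rangle\rangle=0$ by the mass conservation~\eqref{2.massu}. By definition $N(u^n-\langle u^0\rangle)^2=|z^n|_{1,2,\T}^2=\sum_{\sigma\in\E}\tau_\sigma(\Dd_\sigma z^n)^2$. The idea is to test the scheme~\eqref{2.schu} against $z^n_K$: multiply~\eqref{2.schu} by $\Delta t\, z^n_K$, sum over $K\in\T$, and perform a discrete integration by parts on the right-hand side. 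The flux term becomes $-\Delta t\sum_{\sigma=K|L}\tau_\sigma\,\Dd_{K,\sigma}(u^n\gamma(v^n))\,\Dd_{K,\sigma}z^n$, and using the defining relation~\eqref{4.aux1} for $z^n$ one rewrites $\sum_K\m(K)(u^n_K-u^{n-1}_K)z^n_K$ in terms of $z^n$ and $z^{n-1}$ (the latter being the solution of~\eqref{4.aux1} with data $u^{n-1}-\langle u^0\rangle$, also of zero mean).

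The first main step is to handle the time-derivative term. Writing $u^n_K-u^{n-1}_K=(u^n_K-\langle u^0\rangle)-(u^{n-1}_K-\langle u^0\rangle)$ and using~\eqref{4.aux1}, one gets
\begin{align*}
\sum_{K\in\T}\m(K)(u^n_K-u^{n-1}_K)z^n_K
=\sum_{\sigma\in\E}\tau_\sigma\,\Dd_{K,\sigma}z^n\,\Dd_{K,\sigma}(z^n-z^{n-1}),
\end{align*}
and then the elementary inequality $a(a-b)\ge\frac12(a^2-b^2)$ applied edge by edge yields the telescoping lower bound $\frac12\big(|z^n|_{1,2,\T}^2-|z^{n-1}|_{1,2,\T}^2\big)=\frac12\big(N(u^n-\langle u^0\rangle)^2-N(u^{n-1}-\langle u^0\rangle)^2\big)$. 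The second main step is to bound the flux term from below. Since $\Dd_{K,\sigma}z^n=-\m(\sigma)^{-1}$ times a telescoped sum is not available directly, instead one writes $\Dd_{K,\sigma}(u^n\gamma(v^n))=\Dd_{K,\sigma}u^n\cdot$(something)$+\dots$; more cleanly, one uses the factorization $u^n\gamma(v^n)=\big(u^n\sqrt{\gamma(v^n)}\big)\sqrt{\gamma(v^n)}$ together with the discrete product rule and the monotonicity of $s\mapsto e^{-s}$ to reproduce the term $\sum_n\Delta t\,\|u^n\sqrt{\gamma(v^n)}\|_{L^2}^2$. Concretely, testing against $z^n$ and then against the natural quantity and applying Young's inequality should produce $-\Delta t\sum_{\sigma}\tau_\sigma\Dd_{K,\sigma}(u^n\gamma(v^n))\Dd_{K,\sigma}z^n\le -\Delta t\,\|u^n\sqrt{\gamma(v^n)}\|_{L^2}^2+\Delta t\,\m(\Omega)\langle u^0\rangle^2$ up to reorganizing via $\Dd_{K,\sigma}z^n$ against $u^n$ using again~\eqref{4.aux1}.

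After these two steps one obtains, for each $n$,
\begin{align*}
\frac12\big(N(u^n-\langle u^0\rangle)^2-N(u^{n-1}-\langle u^0\rangle)^2\big)+\Delta t\,\|u^n\sqrt{\gamma(v^n)}\|_{L^2(\Omega)}^2\le \Delta t\,\m(\Omega)\langle u^0\rangle^2.
\end{align*}
Summing this from $n=1$ to any $N\le N_T$ telescopes the first term, leaving $N(u^N-\langle u^0\rangle)^2+2\sum_{n=1}^N\Delta t\,\|u^n\sqrt{\gamma(v^n)}\|_{L^2}^2\le N(u^0-\langle u^0\rangle)^2+2T\m(\Omega)\langle u^0\rangle^2$, and taking the maximum over $N$ gives the left-hand side of~\eqref{4.estdual}. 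The final step is to convert $N(u^0-\langle u^0\rangle)$ into the stated $L^2$ norm: by Lemma~\ref{lem.dual}, $N(u^0-\langle u^0\rangle)=\sup\{\int_\Omega(u^0-\langle u^0\rangle)\theta:|\theta|_{1,2,\T}=1\}$, and a discrete Poincaré--Wirtinger inequality (valid under the mesh regularity~\eqref{2.dd}, with constant depending only on $\Omega$ and $\zeta$) bounds $\|\theta-\langle\theta\rangle\|_{L^2}\le C|\theta|_{1,2,\T}$; since $\langle\theta\rangle$ may be nonzero one replaces $\theta$ by $\theta-\langle\theta\rangle$, which does not change the pairing against the zero-mean function $u^0-\langle u^0\rangle$, and then Cauchy--Schwarz gives $N(u^0-\langle u^0\rangle)\le C\|u^0-\langle u^0\rangle\|_{L^2}$, whence $N(u^0-\langle u^0\rangle)^2\le C^2\|u^0-\langle u^0\rangle\|_{L^2}^2$.

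The step I expect to be the main obstacle is the lower bound on the discrete flux term: at the continuous level the identity $\int_\Omega \gamma(v)u^2=\int_\Omega u\gamma(v)\cdot u$ is immediate, but on the mesh $\Dd_{K,\sigma}(u^n\gamma(v^n))$ does not factor through $\Dd_{K,\sigma}u^n$, so one must carefully exploit the two-point structure of the flux and the explicit form $\gamma(s)=e^{-s}$ (in particular that $\gamma>0$ and $\gamma$ is monotone), presumably splitting $\Dd_{K,\sigma}(u^n\gamma(v^n))$ using a discrete Leibniz rule and absorbing the cross terms by Young's inequality, which is precisely where the factor $2$ and the mesh-independent constant in~\eqref{4.estdual} are produced. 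This is the discrete analogue of the manipulation in~\cite[Section 2.1]{BLT21}, adapted to the TPFA setting.
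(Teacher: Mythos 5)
Your overall architecture coincides with the paper's proof: test \eqref{2.schu} against the discrete dual variable $z^n$ solving \eqref{4.aux1} for $u^n-\langle u^0\rangle$, lower-bound the time-increment term by the telescoping quantity $\tfrac12\big(N(u^n-\langle u^0\rangle)^2-N(u^{n-1}-\langle u^0\rangle)^2\big)$ via $a(a-b)\ge\tfrac12(a^2-b^2)$, sum over $n$, and convert $N(u^0-\langle u^0\rangle)$ into $C\|u^0-\langle u^0\rangle\|_{L^2(\Omega)}$ through Cauchy--Schwarz and the discrete Poincar\'e--Wirtinger inequality (your route through Lemma~\ref{lem.dual} is equivalent to the paper's direct estimate, which also uses Lemma~\ref{lem.con} to pass from cell averages to the $L^2$ norm of $u^0$). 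Those parts are sound.

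The genuine gap is precisely the step you flag as the main obstacle: extracting $\Delta t\,\|u^n\sqrt{\gamma(v^n)}\|_{L^2(\Omega)}^2$ from the flux term. The mechanism you propose --- a discrete Leibniz rule for $\Dd_{K,\sigma}(u^n\gamma(v^n))$, monotonicity of $\gamma$, and Young's inequality to absorb cross terms --- is not carried out and would not work as stated: the cross terms involve $\Dd_{K,\sigma}\gamma(v^n)$, which is not controlled at this stage (the $H^1$-type bound on $v^n$ is obtained in Corollary~\ref{coro.unif1} only \emph{after}, and by means of, the duality estimate, so invoking it here would be circular). In fact no product rule and no structural property of $\gamma$ beyond $0<\gamma\le 1$ is needed. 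After the discrete integration by parts, the term $-\Delta t\sum_{\sigma}\tau_\sigma\Dd_{K,\sigma}(u^n\gamma(v^n))\,\Dd_{K,\sigma}z^n$ is integrated by parts \emph{back}, so that it reads $\Delta t\sum_{K}u^n_K\gamma(v^n_K)\sum_{\sigma\in\E_K}\tau_\sigma\Dd_{K,\sigma}z^n$; then the defining relation \eqref{4.aux1} replaces $\sum_{\sigma\in\E_K}\tau_\sigma\Dd_{K,\sigma}z^n$ by $-\m(K)\,(u^n_K-\langle u^0\rangle)$, producing the cellwise product $-\Delta t\sum_K\m(K)\,u^n_K\gamma(v^n_K)\,(u^n_K-\langle u^0\rangle)$ exactly as in the continuous computation; its first contribution is $-\Delta t\,\|u^n\sqrt{\gamma(v^n)}\|_{L^2(\Omega)}^2$ and the second is bounded by $\Delta t\,\m(\Omega)\langle u^0\rangle^2$ using $\gamma\le1$ and the mass conservation \eqref{2.massu}. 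You do allude to ``reorganizing via $\Dd_{K,\sigma}z^n$ against $u^n$ using again \eqref{4.aux1}'', which is indeed the right move, but the proposal neither executes it nor recognizes that it removes any need for a Leibniz-type splitting; as written, the crucial inequality is only asserted (``should produce''), so the proof is incomplete at its key point.
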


\begin{proof}
We consider the term
\begin{align*}
I = \frac12\sum_{K \in \T} {\m(K)} \Big((u^n_K-\langle u^0 \rangle)z^n_k-(u^{n-1}_K-\langle u^0 \rangle)z^{n-1}_K\Big),
\end{align*}
where the vectors $z^n$ and $z^{n-1}$ are the unique solutions to~\eqref{4.aux1} associated to $u^n-\langle u^0 \rangle$ and $u^{n-1}-\langle u^0 \rangle$ respectively. Thanks to~\eqref{4.aux1} and a summation by parts, we first observe that we can rewrite $I$ as
\begin{align}\label{4.aux2}
I &= \frac12 \sum_{\substack{\sigma \in \E_{{\rm int}}\\\sigma = K|L}} \tau_\sigma \Big( (\Dd_\sigma z^n)^2 - (\Dd_\sigma z^{n-1})^2 \Big) = \frac12\left[ N(u^n-\langle u^0 \rangle)^2 - N(u^{n-1}-\langle u^0 \rangle)^2\right].
\end{align}
On the other hand, applying the inequality $(a^2-b^2)\leq 2 a(a-b)$ for $a$, $b \in \R$, we obtain
\begin{align*}
I = \frac12 \sum_{\substack{\sigma \in \E_{{\rm int}}\\\sigma = K|L}} \tau_\sigma \Big( (\Dd_\sigma z^n)^2 - (\Dd_\sigma z^{n-1})^2 \Big) &\leq \sum_{\substack{\sigma \in \E_{{\rm int}}\\\sigma = K|L}}  \tau_\sigma ( \Dd_{K,\sigma} z^n - \Dd_{K,\sigma} z^{n-1} ) \, \Dd_{K,\sigma} z^n\\
&= -\sum_{K \in \T} \sum_{\sigma \in \E_K} \tau_\sigma \,  ( \Dd_{K,\sigma} z^n - \Dd_{K,\sigma} z^{n-1} ) \, z^n_K.
\end{align*}
Using the equations~\eqref{4.aux1} and~\eqref{2.schu} and some discrete integration by parts we get
\begin{align*}
I \leq \sum_{K \in \T} \m(K) \, \left((u^n_K-\langle u^0 \rangle)-(u^{n-1}_K-\langle u^0 \rangle)\right) \, z^n_K
&= - \Delta t\sum_{\substack{\sigma \in \E_{{\rm int}}\\\sigma = K|L}} \tau_\sigma \, \Dd_{K,\sigma} \left(u^n \gamma(v^n)\right) \, \Dd_{K,\sigma} z^n\\
&= \Delta t \sum_{K \in \T} \sum_{\sigma \in \E_K} \tau_\sigma \, \Dd_{K,\sigma} z^n \, u^n_K \gamma(v^n_K).
\end{align*}
It remains to apply once more~\eqref{4.aux1} and the inequality $\gamma(v^n_K) \leq 1$ for all $K \in \T$ together with~\eqref{2.massu} and we end up with
\begin{align}
I \leq -\Delta t \sum_{K\in\T} \m(K)\left(u^n_K-\langle u^0 \rangle\right) \, u^n_K \gamma(v^n_K) =& -\Delta t \, \|u^n\sqrt{\gamma(v^n)}\|^2_{L^2(\Omega)}  + \Delta t \, \langle u^0 \rangle \|u^n \gamma(v^n)\|_{L^1(\Omega)} \nonumber \\
\leq&  -\Delta t \, \|u^n\sqrt{\gamma(v^n)}\|^2_{L^2(\Omega)}  + \Delta t \, \m(\Omega) \langle u^0 \rangle^2. \label{4.aux3}
\end{align}
Gathering~\eqref{4.aux2} and~\eqref{4.aux3} we obtain
\begin{align*}
N(u^n-\langle u^0 \rangle)^2 + 2 \Delta t \, \|u^n\sqrt{\gamma(v^n)}\|^2_{L^2(\Omega)} \leq N(u^{n-1}-\langle u^0 \rangle)^2 + 2 \Delta t \, \m(\Omega) \langle u^0 \rangle^2.
\end{align*}
We sum over $n$ the above inequality, such that, it holds:
\begin{align*}
\max_{1 \leq n \leq N_T} N(u^n-\langle u^0 \rangle)^2 + 2 \sum_{n=1}^{N_T} \Delta t \, \|u^n\sqrt{\gamma(v^n)}\|^2_{L^2(\Omega)} \leq N(u^0-\langle u^0 \rangle)^2 + 2 \, T \, \m(\Omega)\, \langle u^0 \rangle^2.
\end{align*}
We now apply the Cauchy-Schwarz inequality, Lemma~\ref{lem.con} in Appendix~\ref{app.A} and the discrete Poincar\'e-Wirtinger inequality (see~\cite[Proposition 1]{BCF15}) and we have
\begin{align}
N(u^0-\langle u^0 \rangle)^2 &=|z^0|_{1,2,\T}^2 = \sum_{K \in \T} \m(K) (u^0_K-\langle u^0 \rangle) \, z^0_K \nonumber\\
&\leq \left(\sum_{K\in\T} \m(K) \,\left|u^0_K-\langle u^0 \rangle\right|^2\right)^{1/2} \, \|z^0\|_{L^2(\Omega)} \leq C \|u^0-\langle u^0 \rangle\|_{L^2(\Omega)} \, |z^0|_{1,2,\T}, \label{4.aux4}
\end{align}
for some $C$ depending only on $\Omega$ and $\zeta$.
Thereby, combining \eqref{4.aux4} with the previous inequality 
we deduce~\eqref{4.estdual}.
\end{proof}

\begin{remark}\label{rk:initial_L2}
The counterpart of Proposition~\ref{prop.dual} in a continuous setting, given by the equation~\eqref{1.dual}, only requires the $(H^1)'$ norm of the initial data (minus its mean) while here the obtained inequality depends on its $L^2$ norm. To discuss this divergence, let us first introduce the zero-mean $L^2$ set
\begin{equation*}
   L_{\Diamond}^2(\Omega):=\left\{ w \in L^2(\Omega): \langle w\rangle =0\right\}.
\end{equation*}
In the proof of Proposition~\ref{prop.dual}, the $L^2$ norm of the initial data arises in~\eqref{4.aux4}, where we obtain the continuity of the discrete projection given by~\eqref{2.ic} and~\eqref{def:H_T},
\begin{equation}\label{4.proj}
    \pi_\T: w \in L_{\Diamond}^2(\Omega) \mapsto \sum_{K\in\T} \mathbf{1}_K \frac{1}{\m(K)} \int_{K} w\, \mathrm{d}x \in \widetilde{\Hi}_\T,
\end{equation}
when $\widetilde{\Hi}_\T$ is endowed with the norm $N$, and $L_{\Diamond}^2(\Omega)$ is naturally endowed with the $L^2$ norm. However, one cannot hope in general to obtain the continuity of the discrete projection $\pi_\T$ when extended and seen as an operator from $((H^1)'(\Omega),\|\cdot\|_{(H^1)'(\Omega)})$ to $(\widetilde{\Hi}_\T,N)$, as showed in the counter-example given in Appendix~\ref{sec.countex}.
Actually, while $N$ seen as a dual norm (see Lemma~\ref{lem.dual}) and the norm $\|\cdot\|_{(H^1)'(\Omega)}$ share formal similarities, they are quite delicate to compare. One reason is that the intersection of the dual sets $\widetilde{\Hi}_\T$ and $H^1(\Omega)$ is trivial.

We could however slightly weaken the assumption $u^0 \in L^2(\Omega)$ by $u^0 \in L^p(\Omega)$ for some convenient $p<2$. Indeed, it is sufficient in the estimate~\eqref{4.aux4} to apply H\"older's inequality as follows
\begin{align*}
    \sum_{K\in\T} \m(K) \, (u^0_K - \langle u^0 \rangle) \, z^0_K \leq \|u^0-\langle u^0 \rangle \|_{L^p(\Omega)} \, \|z^0\|_{L^q(\Omega)},
\end{align*}
with $1/p+1/q=1$. Then, depending on the space dimension $\di=1$, $2$ or $3$, one can readily obtain the convenient exponent $p<2$.
\end{remark}

\begin{corollary}\label{coro.unif1}
Assume \emph{\textbf{(H1)}--\textbf{(H4)}} and let $\D$ be an admissible mesh satisfying~\eqref{2.dd}. Then, there exists a constant $C>0$ which depends on $u^0$, $v^0$, $\beta$, $\delta$, $T$, $\Omega$ and $\zeta$ such that
\begin{multline}\label{4.unif}
\max_{1\leq n\leq N_T}\sum_{K \in \T} \m(K) h(u^n_K) + 4 \sum_{n=1}^{N_T} \Delta t \, |\sqrt{u^n \, \gamma(v^n)}|^2_{1,2,\T}\\ + \max_{1\leq n \leq N_T} \, \|v^n\|_{1,2,\T} +\eps \sum_{n=1}^{N_T} \Delta t \left\|\frac{v^n-v^{n-1}}{\Delta t}\right\|^2_{L^2(\Omega)} \leq C.
\end{multline}
\end{corollary}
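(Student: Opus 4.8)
The strategy is to combine the discrete entropy inequality~\eqref{2.EI} from Theorem~\ref{thm.exi} with the duality estimate~\eqref{4.estdual} from Proposition~\ref{prop.dual}, exactly mirroring the formal continuous computation of Section~\ref{sec.keyest}. The key point is to establish a discrete analogue of the lower bound~\eqref{1.lowerbound_ent} on the entropy functional $H(u^n,v^n)$, which by itself is not coercive because of the indefinite term $-u^n v^n$. I would first fix $n$ and rewrite this term using the auxiliary variable: letting $z^n \in \widetilde{\Hi}_\T$ be the solution of~\eqref{4.aux1} associated to $u^n - \langle u^0\rangle$, and using the mass conservation~\eqref{2.massu}, one has
\begin{align*}
-\sum_{K\in\T} \m(K)\, u^n_K v^n_K = -\sum_{K\in\T} \m(K)(u^n_K - \langle u^0\rangle) v^n_K - \langle u^0\rangle \sum_{K\in\T}\m(K) v^n_K.
\end{align*}
The first sum equals $\sum_{\sigma\in\E_{\rm int},\,\sigma=K|L}\tau_\sigma \Dd_{K,\sigma} z^n\, \Dd_{K,\sigma} v^n$ after a discrete integration by parts, which is bounded by $N(u^n-\langle u^0\rangle)\,|v^n|_{1,2,\T}$; applying Young's inequality splits off $\tfrac{\delta}{4}|v^n|_{1,2,\T}^2$ plus $C_T^2/\delta$. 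The second sum is controlled by $\tfrac{\beta}{4}\|v^n\|_{L^2(\Omega)}^2 + \langle u^0\rangle^2 \m(\Omega)/\beta$, again by Young. This yields the discrete analogue of~\eqref{1.lowerbound_ent}:
\begin{align*}
H(u^n,v^n) \geq \sum_{K\in\T}\m(K) h(u^n_K) + \frac{\beta}{4}\|v^n\|_{L^2(\Omega)}^2 + \frac{\delta}{4}|v^n|_{1,2,\T}^2 - \frac{C_T^2}{\delta} - \frac{\langle u^0\rangle^2\m(\Omega)}{\beta},
\end{align*}
where $C_T$ denotes the uniform bound on $N(u^n-\langle u^0\rangle)$ provided by~\eqref{4.estdual} (which, crucially, depends only on $\|u^0-\langle u^0\rangle\|_{L^2}$, $T$, $\Omega$, $\zeta$, hence is admissible).

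**Assembling the estimate.** Next I would telescope~\eqref{2.EI} from $1$ to $n$, obtaining for every $n$
\begin{align*}
H(u^n,v^n) + \sum_{k=1}^{n}\Delta t\, D(u^k,v^k,v^{k-1}) \leq H(u^0,v^0).
\end{align*}
The right-hand side $H(u^0,v^0)$ is finite and bounded in terms of the data: indeed $\sum_K \m(K) h(u^0_K) \leq \int_\Omega h(u^0)\,\dist x$ by Jensen (using~\eqref{2.ic} and convexity of $h$, noting $u^0\in L^2(\Omega)\subset L\log L$ on a bounded domain), the term $\tfrac{\beta}{2}\|v^0\|_{L^2}^2 + \tfrac{\delta}{2}|v^0|_{1,2,\T}^2$ is bounded by $C\|v^0\|_{H^1(\Omega)}^2$ (the discrete seminorm of the projection being controlled by the continuous one), and $-\sum_K \m(K) u^0_K v^0_K$ is bounded via Cauchy–Schwarz and $u^0,v^0\in L^2$. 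Combining the telescoped inequality with the lower bound on $H(u^n,v^n)$ and recalling that $D(u^k,v^k,v^{k-1})$ contains precisely $4|\sqrt{u^k\gamma(v^k)}|_{1,2,\T}^2$ and $\eps\m(K)|(v^k_K-v^{k-1}_K)/\Delta t|^2$, one obtains uniform control of $\max_n \sum_K\m(K) h(u^n_K)$, of $\max_n\|v^n\|_{L^2}^2$ and $\max_n |v^n|_{1,2,\T}^2$ (hence $\max_n\|v^n\|_{1,2,\T}$), of $\sum_n\Delta t\,|\sqrt{u^n\gamma(v^n)}|_{1,2,\T}^2$, and of $\eps\sum_n\Delta t\|(v^n-v^{n-1})/\Delta t\|_{L^2}^2$, which is exactly~\eqref{4.unif}.

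**Main obstacle.** The delicate point is precisely the one flagged in Remark~\ref{rk:initial_L2}: one must make sure that $C_T$ arising from~\eqref{4.estdual} does not secretly reintroduce a dependence that breaks the argument, and that the lower bound on $H(u^n,v^n)$ truly absorbs the bad term $-u^n v^n$ with coefficients strictly smaller than those present with the good sign in $H$ (so that $\tfrac{\beta}{2}-\tfrac{\beta}{4}>0$ and $\tfrac{\delta}{2}-\tfrac{\delta}{4}>0$ remain). One also needs the discrete Poincaré–Wirtinger inequality to control $\|v^n - \langle v^n\rangle\|_{L^2}$, together with the explicit mass formula~\eqref{2.massv} bounding $\langle v^n\rangle$ uniformly in $n$ (and in $\Delta t$), to close the estimate on $\|v^n\|_{1,2,\T}$ rather than merely on the seminorm — this is where the full norm, not just the seminorm, is recovered. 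Everything else is a routine bookkeeping of constants, none of which depend on the mesh beyond the regularity parameter $\zeta$.
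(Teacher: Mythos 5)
Your proof is correct and follows essentially the same route as the paper: a discrete lower bound on $H(u^n,v^n)$ obtained from the duality estimate~\eqref{4.estdual} via the auxiliary $z^n$, discrete integration by parts and Young's inequality, then telescoping the entropy inequality~\eqref{2.EI} and bounding $H(u^0,v^0)$ through Lemma~\ref{lem.con} and Lemma~\ref{lem.H1con}. The only superfluous point is your appeal to a discrete Poincar\'e--Wirtinger inequality and the mass formula~\eqref{2.massv}: your own lower bound already contains the term $\tfrac{\beta}{4}\|v^n\|_{L^2(\Omega)}^2$, so the full norm $\|v^n\|_{1,2,\T}$ is controlled without it.
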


\begin{proof}
As already mentioned, in order to establish~\eqref{4.unif}, the main idea is to use the entropy inequality~\eqref{2.EI}. For this purpose we need to bound from below the term $-u^n_K v^n_K$ appearing in the definition~\eqref{2.defent} of the functional $H$. In the sequel we denote by $z^n$ the unique solution to~\eqref{4.aux1} associated to $u^n-\langle u^0\rangle$. We have the following relations
\begin{align*}
\sum_{K\in \T} \m(K) \, u^n_K v^n_K &= \sum_{K\in \T} \m(K) (u^n_K -\langle u^0 \rangle) v^n_K + \sum_{K\in\T} \m(K) \, \langle u^0 \rangle v^n_K\\
&= -\sum_{K\in\T}  \sum_{\sigma \in \E_K} \tau_\sigma \,\Dd_{K,\sigma} z^n \, v^n_K + \langle u^0 \rangle \sum_{K\in\T} \m(K) \, v^n_K\\
&= \sum_{\substack{\sigma \in \E_{{\rm int}}\\\sigma = K|L}} \tau_\sigma \, \Dd_{K,\sigma} z^n \, \Dd_{K,\sigma} v^n + \langle u^0 \rangle \sum_{K\in\T} \m(K) \,  v^n_K.
\end{align*}
Applying Young's inequality we obtain
\begin{align*}
-\sum_{K\in \T} \m(K) \, u^n_K v^n_K \geq -\frac{N(u^n-\langle u^0 \rangle)^2}{\delta} - \frac{\delta}{4} \, |v^n|^2_{1,2,\T} - \frac{\langle u^0 \rangle^2}{\beta} - \frac{\beta}{4} \, \|v^n\|^2_{L^2(\Omega)}.
\end{align*}
Therefore, thanks to Proposition~\ref{prop.dual} we deduce that there exists a constant $C>0$ only depending on $u^0$, $T$, $\Omega$ and $\zeta$ such that
\begin{align*}
H(u^n,v^n) \geq -\frac{C}{\delta} - \frac{\langle u^0 \rangle^2}{\beta} + \sum_{K\in\T} \m(K) \left(h(u^n_K) + \frac{\beta}{4} |v^n_K|^2 \right) + \frac{\delta}{4} \sum_{\sigma \in \E} \tau_\sigma \, (\Dd_\sigma v^n)^2.
\end{align*}
Hence, the inequality~\eqref{4.unif} is a simple consequence of the above estimate, the entropy production inequality~\eqref{2.EI}, Lemma~\ref{lem.con} and Lemma~\ref{lem.H1con}.
\end{proof}

\begin{proposition}\label{prop.timetranslate}
Assume \emph{\textbf{(H1)}--\textbf{(H4)}} with $\eps>0$ and let $\D$ be an admissible mesh satisfying~\eqref{2.dd}. Then there exists a constant $C>0$ only depending on $u^0$, $v^0$, $\beta$, $\eps$, $\delta$, $T$, $\Omega$ and $\zeta$, which blows-up as $\eps \to 0$, such that
\begin{align}\label{5.timetranslate}
    \|v(\cdot, \cdot+\tau)-v(\cdot,\cdot)\|^2_{L^2(\Omega \times (0,T-\tau)} \leq C \, \tau, \qquad \tau\in(0,\Delta t),
\end{align}
where $v \in \Hi_\D$ denotes the piecewise constant in time function associated to the solutions $v^n$ of~\eqref{2.schv} for $1 \leq n \leq N_T$.
\end{proposition}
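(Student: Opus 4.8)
The plan is to exploit the very special structure of $v\in\Hi_\D$, namely that it is \emph{piecewise constant in time} on the slabs $(t_{n-1},t_n]$, so that a time-translate by $\tau<\Delta t$ can only ``see'' a jump when $t$ and $t+\tau$ straddle a grid point $t_n$. Concretely, fix $\tau\in(0,\Delta t)$ and take $t\in(t_{n-1},t_n]$ with $t<T-\tau$. Since $\tau<\Delta t$, one has $t+\tau\in(t_{n-1},t_{n+1}]$, so either $t+\tau\le t_n$, in which case $v(\cdot,t+\tau)=v^n=v(\cdot,t)$ and the translate difference vanishes, or $t+\tau\in(t_n,t_{n+1}]$, in which case $v(\cdot,t+\tau)=v^{n+1}$ and $v(\cdot,t+\tau)-v(\cdot,t)=v^{n+1}-v^n$. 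The second alternative occurs exactly for $t\in(t_n-\tau,t_n]$, a set of Lebesgue measure $\tau$ contained in the slab $(t_{n-1},t_n]$ because $\tau<\Delta t$. Summing over $n=1,\dots,N_T-1$ (the slab $n=N_T$ does not contribute, since for $t\in(t_{N_T-1},t_{N_T}]$ with $t<T-\tau$ one has $t+\tau<t_{N_T}$, hence no jump) yields the exact identity
\begin{equation*}
\|v(\cdot,\cdot+\tau)-v(\cdot,\cdot)\|^2_{L^2(\Omega\times(0,T-\tau))}=\tau\sum_{n=1}^{N_T-1}\|v^{n+1}-v^n\|^2_{L^2(\Omega)}.
\end{equation*}

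Next I would rewrite the right-hand side in terms of the discrete time derivative of $v$ via $\|v^{n+1}-v^n\|^2_{L^2(\Omega)}=(\Delta t)^2\,\big\|\tfrac{v^{n+1}-v^n}{\Delta t}\big\|^2_{L^2(\Omega)}$ and use $\Delta t\le T$ together with a shift of summation index to obtain
\begin{equation*}
\tau\sum_{n=1}^{N_T-1}\|v^{n+1}-v^n\|^2_{L^2(\Omega)}
=\tau\,\Delta t\sum_{n=1}^{N_T-1}\Delta t\,\Big\|\frac{v^{n+1}-v^n}{\Delta t}\Big\|^2_{L^2(\Omega)}
\le \tau\,T\sum_{n=1}^{N_T}\Delta t\,\Big\|\frac{v^{n}-v^{n-1}}{\Delta t}\Big\|^2_{L^2(\Omega)}.
\end{equation*}

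It then remains to control the sum on the right by the uniform entropy estimate. Since $\eps>0$, the bound~\eqref{4.unif} of Corollary~\ref{coro.unif1} gives $\eps\sum_{n=1}^{N_T}\Delta t\,\big\|\tfrac{v^{n}-v^{n-1}}{\Delta t}\big\|^2_{L^2(\Omega)}\le C$ with $C$ depending only on $u^0,v^0,\beta,\delta,T,\Omega,\zeta$; dividing by $\eps$ and combining with the two displays above yields~\eqref{5.timetranslate} with constant $T\,C/\eps$, which blows up as $\eps\to0$. The argument is essentially elementary; the only point requiring a little care is the combinatorial bookkeeping of which slabs contribute to the translate difference and the handling of the endpoints of $(0,T-\tau)$. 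It is also worth noting that the $\eps^{-1}$ factor — hence the degeneracy of this estimate in the quasi-stationary limit — enters precisely and only through the $\eps$-weighted dissipation term in~\eqref{4.unif}, so that a uniform-in-$\eps$ time-translate estimate for $v$ cannot be obtained by this route (which is consistent with the appearance of an initial layer as $\eps\to0$).
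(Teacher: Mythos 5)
Your proposal is correct and follows essentially the same route as the paper: both reduce the time-translate to $\tau\sum_n\|v^{n}-v^{n-1}\|^2_{L^2(\Omega)}$ (the paper via the EGH00-style bound with the indicator $\chi_n$, you via the exact identity available when $\tau<\Delta t$) and then invoke the $\eps$-weighted bound on the discrete time derivative from Corollary~\ref{coro.unif1}, which is exactly where the $\eps^{-1}$ blow-up enters. The only difference is cosmetic sharper bookkeeping (your identity omits the $v^1-v^0$ jump and is an equality), which does not change the argument.
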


\begin{proof}
Following, for instance~\cite{EGH00}, we first notice that it holds
\begin{align*}
    \int_0^{T-\tau} \int_\Omega \left(v(x,t+\tau)-v(x,t)\right)^2 \, \dist x \dist t \leq \sum_{n=1}^{N_T} \int_0^{T-\tau} \chi_n(t,t+\tau) \, \dist t \, \sum_{K\in \T} \m(K) \, (v^n_K-v^{n-1}_K)^2,
\end{align*}
where
\begin{align*}
    \chi_n(t,t+\tau) =
    \left\{
     \begin{array}{ll}
        1 & \mbox{if } n\Delta t \in (t,t+\tau], \\
        0 & \mbox{if } n\Delta t \notin (t,t+\tau]. 
    \end{array}
\right.
\end{align*}
Let us observe that by construction we have
\begin{align*}
    \int_0^{T-\tau} \chi_n(t,t+\tau) \, \dist t  \leq \tau.
\end{align*}
Thus
\begin{align*}
     \int_0^{T-\tau} \int_\Omega \left(v(x,t+\tau)-v(x,t)\right)^2 \, \dist x\dist t \leq \tau \sum_{n=1}^{N_T} \sum_{K\in \T} \m(K) \, (v^n_K-v^{n-1}_K)^2.
\end{align*}
Thanks to Corollary~\ref{coro.unif1} we deduce that there exists a constant $C>0$ only depending on $u^0$, $v^0$, $\beta$, $\eps$, $\delta$, $T$, $\Omega$ and $\zeta$ such that~\eqref{5.timetranslate} holds.
\end{proof}

\section{Proof of convergence of the scheme}\label{sec.convproof}

\subsection{Compactness properties}

Let the discrete time derivative of a function $w\in \Hi_{\D_m}$ be given by
\begin{align*}
    \partial_t^m w(x,t) = \frac{w^n(x)-w^{n-1}(x)}{\Delta t_m} \quad \mbox{for } (x,t) \in \Omega \times (t_{n-1},t_n].
\end{align*}

\begin{proposition}\label{prop.compacite}
Assume \emph{\textbf{(H1)}--\textbf{(H4)}} and $\eps>0$. Let $(\mathcal{D}_m)_{m\in\N}$ 
be a family of admissible meshes satisfying~\eqref{2.dd} uniformly in $m\in\N$. Let $(u_m,v_m)_{m\in\N}$ be a family of finite volume solutions to~\eqref{2.ic}--\eqref{2.schu} obtained in Theorem~\ref{thm.exi}. Then, there exists $u \in L^\infty(0,T;L^1(\Omega))$ and $v\in L^\infty(0,T;H^1(\Omega)) \cap H^1(0,T;L^2(\Omega))$ two nonnegative functions such that, up to a subsequence, as $m\to\infty$ we have
\begin{align}
u_m &\rightharpoonup u \quad \mbox{weakly in }L^1(Q_T),\label{5.convu}\\
v_m &\to v \quad \mbox{strongly in }L^2(Q_T),\label{5.convv}\\
\nabla^m v_m &\rightharpoonup \nabla v \quad \mbox{weakly in }L^2(Q_T),\label{5.convnabv}\\
\partial_t^m v_m &\rightharpoonup \partial_t v \quad \mbox{weakly in }L^2(Q_T),\label{5.convdt}\\
u_m \, \gamma(v_m) &\rightharpoonup u \, \gamma(v) \quad \mbox{weakly in }L^2(Q_T).\label{5.convugam}
\end{align}
\end{proposition}

\begin{proof}
All the convergence properties stated below occur up to the extraction of a subsequence when $m \to \infty$. The existence of a non-negative function $u\in L^1(Q_T)$ such that the convergence result~\eqref{5.convu} holds, is a direct consequence of~\eqref{4.unif} and the Dunford-Pettis theorem. {The weak  $L^1(Q_T)$ convergence and the mass estimate~\eqref{2.massu} show that $u \in L^\infty(0,T;L^1(\Omega))$ and that mass is conserved at the limit.} 
Now, the existence of a function $v \in L^2(Q_T)$ satisfying~\eqref{5.convv} is a consequence of Corollary~\ref{coro.unif1} and Proposition~\ref{prop.timetranslate} in conjunction with a Kolmogorov compactness theorem in $L^2$, see~\cite[Theorem 3.9, Theorem 4.2]{EGH00}. This convergence implies that $v_m \to v$ a.e. in $Q_T$, so that $v$ is a nonnegative function. Moreover, Corollary~\ref{coro.unif1} yields the existence of a function $w$ such that $\nabla^m v_m \rightharpoonup w$ weakly in $L^2(Q_T)$. Then, we readily show that in the sense of distribution it holds $w=\nabla v$. The fact that $v \in L^\infty(0,T;H^1(\Omega))$ is a consequence of the entropy inequality~\eqref{2.EI} and the l.s.c. of the $L^2$ norm. Applying once more Corollary~\ref{coro.unif1} allows us to observe that the sequence $(\partial_t^m v_m)_{m\in\N}$ is uniformly bounded in $L^2(Q_T)$, which yields the convergence result~\eqref{5.convdt}. Finally, for~\eqref{5.convugam}, we first notice that $u_m \rightharpoonup u$ weakly in $L^1(Q_T)$. Besides, since the sequence of nonnegative functions $(v_m)_{m\in\N}$ 
converge a.e in $Q_T$
and since the function $\gamma$ is continuous then
\begin{align*}
\gamma(v_m) \to \gamma(v) \quad \mbox{a.e. in }Q_T
\quad \mbox{and} \quad \sup_{m \in \N} \|\gamma(v_m)\|_{L^\infty(Q_T)} \leq 1.
\end{align*}
Hence, thanks to Lemma~\ref{lem.convugamm} we conclude that $u_m \gamma(v_m)  \, \rightharpoonup\,  u \gamma(v)$ weakly in $L^1(Q_T)$. Finally, using again the uniform bound on $\gamma(v_m)$ we deduce from Proposition~\ref{prop.dual} that the sequence $(u_m \gamma(v_m))_{m \in \N}$ is uniformly bounded in $L^2(Q_T)$. We can then improve its weak convergence and obtain~\eqref{5.convugam}. 
\end{proof}

\subsection{Identification of the limit}\label{sec.limitscheme}
We can finally prove the convergence of the scheme.

\begin{proof}[Proof of Theorem~\ref{thm.conv}]

It remains to show that the functions $u$ and $v$ obtained in Proposition~\ref{prop.compacite} are weak solutions to~\eqref{1.equ}--\eqref{1.IC} in the sense of Definition~\ref{def.weak}. In order to prove that $v$ satisfies~\eqref{1.weakv} we refer for instance to~\cite[Theorem 4.2]{EGH00}. Now, let $\varphi \in C^{\infty}_c(\overline{\Omega} \times [0,T))$ with $\nabla \varphi \cdot \nu = 0$ on $\partial \Omega \times [0,T)$, we multiply~\eqref{2.schu} by $\Delta t_m \varphi_K^{n-1}$ where $\varphi_K^{n-1} = \varphi(x_K,t_{n-1})$, we sum over $K \in \T_m$ and $1\leq n \leq N_T$, and we obtain $F_1^m+F_2^m = 0$ where
\begin{align*}
F_1^m = \sum_{n=1}^{N_T} \sum_{K \in \T_m} \m(K) (u^n_K - u^{n-1}_K) \, \varphi^{n-1}_K = \sum_{n=1}^{N_T} \sum_{K\in \T_m} \m(K) u^n_K ( \varphi^{n-1}_K - \varphi^n_K) - \sum_{K\in\T_m} \m(K) \, u^0_K \varphi^0_K,
\end{align*}
and
\begin{align*}
F_2^m = -\sum_{n=1}^{N_T} \Delta t_m \sum_{K\in\T_m}\sum_{\sigma \in \E_K} \tau_\sigma \, (\Dd_{K,\sigma} u^n \gamma(v^n)) \, \varphi^{n-1}_K.
\end{align*} 
Let $\psi_m(x,t) = (\varphi^{n}_K - \varphi^{n-1}_K)/\Delta t_m$ for all $x\in K$, $K\in \T_m$ and $t\in(t_{n-1}, t_n]$ and $\varphi_m^0(x) = \varphi^{0}_K$ for all $x\in K$. Then, since $\psi_m \to\partial_t\varphi $ in $L^\infty(Q_T)$ and $\varphi^0_m\to \varphi(\cdot,0)$ in $L^\infty(\Omega)$, using the convergence results of Proposition~\ref{prop.compacite} one obtains that, up to a subsequence,
\begin{align*}
&F^m_1 +\int_{Q_T} u \partial_t \varphi \, \dist x \dist t + \int_{\Omega} u^0(x) \varphi(x,0) \, \mathrm{d}x\\
=& \int_{Q_T} (u \partial_t \varphi - u_m \psi_m)\, \dist x \dist t +  \int_{\Omega} (u^0(x) \varphi(x,0) - u^0_m(x)\varphi_m^0(x))\, \dist x \to0\quad \mbox{as }m\to \infty.
\end{align*}
Now, we rewrite $F_2^m$ as
\begin{align*}
    F_2^m = -\frac12 \sum_{n=1}^{N_T} \Delta t_m \sum_{\sigma \in \E} \m(\sigma) \, \left(u^n_{K,\sigma} \gamma(v^n_{K,\sigma}) - u^n_K \gamma(v^n_K) \right) \, \frac{\varphi^{n-1}_K-\varphi^{n-1}_{K,\sigma}}{\dist_\sigma}.
\end{align*}
Let us now introduce the following quantity
\begin{align*}
    F^m_{21} &= \sum_{n=1}^{N_T} \Delta t_m \int_\Omega u_m \gamma(v_m) \, \Delta \varphi(x,t_{n-1}) \, \dist x\\
    &= \sum_{n=1}^{N_T} \Delta t_m \sum_{K \in \T_m} u^n_K \gamma(v^n_K) \, \int_K \Delta \varphi(x,t_{n-1}) \, \dist x\\
    &= \frac12 \sum_{n=1}^{N_T} \Delta t_m \sum_{\sigma \in \E} \left(u^n_K \gamma(v^n_K) - u^{n}_{K,\sigma} \gamma(v^n_{K,\sigma})\right) \, \int_{\sigma} \nabla \varphi(s,t_{n-1}) \cdot \nu_{K,\sigma} \, \dist s. 
\end{align*}
Thanks to the weak convergence, up to a subsequence, in $L^2(Q_T)$ of $(u_m \, \gamma(v_m))_{m \in \N}$ towards $u\gamma(v)$ we obtain
\begin{align*}
F^m_{21}  \to \int_{Q_T} u \gamma(v) \, \Delta \varphi \, \dist x\dist t \quad \mbox{as }m\to\infty.
\end{align*}
Besides, we notice that the term $F^m_2+F^m_{21}$ can be rewritten as
\begin{align*}
    F^m_2+F^m_{21} = \frac12 \sum_{n=1}^{N_T} \Delta t_m \sum_{\sigma \in \E} \m(\sigma) \left(u^n_K\gamma(v^n_K)-u^n_{K,\sigma} \gamma(v^n_{K,\sigma})\right) \, R^{n-1}_\sigma,
\end{align*}
where
\begin{align*}
    R^{n-1}_\sigma := \frac{1}{\m(\sigma)} \int_{\sigma} \nabla \varphi(s,t_{n-1}) \cdot \nu_{K,\sigma} \,\dist s - \frac{\varphi^{n-1}_{K,\sigma}-\varphi^{n-1}_K}{\dist_\sigma}.
\end{align*}
Using the regularity of $\varphi$ we deduce that there exists a constant $C> 0$ only depending on $\varphi$ such that
\begin{align*}
    \left|R^{n-1}_\sigma\right| \leq C \, \eta_m.
\end{align*}
This yields
\begin{multline*}
    \left|F^m_2+F^m_{21}\right| \leq C \eta_m \sum_{n=1}^{N_T} \Delta t_m \sum_{\substack{\sigma \in \E_{{\rm int}}\\\sigma=K|L }} \m(\sigma) \Bigg| \left(\sqrt{u^n_K \gamma(v^n_K)}+\sqrt{u^n_{L} \gamma(v^n_{L})} \right) \\ \times \left(\sqrt{u^n_K \gamma(v^n_K)}-\sqrt{u^n_{L} \gamma(v^n_{L})} \right)\Bigg|.
\end{multline*}
Now, thanks to Cauchy-Schwarz inequality and the discrete entropy estimate~\eqref{2.EI} we obtain the existence of a constant, still denoted $C$, independent of $\eta_m$ such that
\begin{align*}
    \left|F^m_2+F^m_{21}\right| \leq C \eta_m \left(\sum_{n=1}^{N_T} \Delta t_m \sum_{\substack{\sigma \in \E_{{\rm int}}\\\sigma=K|L }} \m(\sigma) \dist_\sigma  \left(\sqrt{u^n_K \gamma(v^n_K)}+\sqrt{u^n_{L} \gamma(v^n_{L})} \right)^2\right)^{1/2}.
\end{align*}
We estimate the remaining term as follows
\begin{align*}
    \sum_{n=1}^{N_T} \Delta t_m \sum_{\substack{\sigma \in \E_{{\rm int}}\\\sigma=K|L }} \m(\sigma) \dist_\sigma  &\Bigg(\sqrt{u^n_K \gamma(v^n_K)}+\sqrt{u^n_{L} \gamma(v^n_{L})} \Bigg)^2 \\ &\leq 2 \sum_{n=1}^{N_T} \Delta t_m \sum_{\substack{\sigma \in \E_{{\rm int}}\\\sigma=K|L }} \m(\sigma) \dist(x_K,x_L) \, \left( u^n_K \gamma(v^n_K)+ u^n_{L} \gamma(v^n_{L}) \right)\\
    &\leq 2\sum_{n=1}^{N_T} \Delta t_m \sum_{K \in \T_m} u^n_K \gamma(v^n_K) \sum_{\sigma \in \E_{{\rm int},K}} \m(\sigma) \, \dist(x_K,x_L),
\end{align*}
where, using furthermore the regularity assumption~\eqref{2.dd}, we observe that in two space dimensions we have
\begin{align*}
    \sum_{\sigma \in \E_{{\rm int},K}} \m(\sigma) \, \dist(x_K,x_L) \le \frac{1}{\zeta} \sum_{\sigma \in \E_{{\rm int},K}} \m(\sigma) \, \dist(x_K,\sigma) \leq \frac{2}{\zeta} \m(K),
\end{align*}
so that
\begin{align*}
    \sum_{n=1}^{N_T} \Delta t_m \sum_{\substack{\sigma \in \E_{{\rm int}}\\\sigma=K|L }} \m(\sigma) \dist_\sigma  \Bigg(\sqrt{u^n_K \gamma(v^n_K)}+&\sqrt{u^n_{L} \gamma(v^n_{L})} \Bigg)^2 \leq \frac{4}{\zeta} \sum_{n=1}^{N_T} \Delta t_m \sum_{K \in \T_m} \m(K) \, u^n_K \gamma(v^n_K).
\end{align*}
Applying once more the Cauchy-Schwarz inequality and the estimate $\gamma(v^n_K)\leq 1$ for all $K \in \T_m,$ we obtain
\begin{multline*}
     \sum_{n=1}^{N_T} \Delta t_m \sum_{\substack{\sigma \in \E_{{\rm int}}\\ \sigma=K|L }} \m(\sigma) \dist_\sigma  \Bigg(\sqrt{u^n_K \gamma(v^n_K)}+\sqrt{u^n_{L} \gamma(v^n_{L})} \Bigg)^2\\ \leq \frac{4 \sqrt{T \, \m(\Omega)}}{\zeta} \left(\sum_{n=1}^{N_T} \Delta t_m \|u_m \sqrt{\gamma(v_m)}\|^2_{L^2(\Omega)} \right)^{1/2}.
\end{multline*}
We end up with the following estimate
\begin{align*}
    |F^m_2+F^m_{21}| \leq \frac{2 C (T \m(\Omega))^{1/4}}{\zeta^{1/2}} \, \left(\sum_{n=1}^{N_T} \Delta t_m \|u_m \sqrt{\gamma(v_m)}\|^2_{L^2(\Omega)} \right)^{1/4} \, \eta_m.
\end{align*}
Finally, the discrete duality estimate~\eqref{4.estdual} allows us to conclude that
\begin{align*}
    F^m_2 + F^m_{21} \to 0 \quad \mbox{as }m\to \infty.
\end{align*}
This finishes the proof of Theorem~\ref{thm.conv}.
\end{proof}

\section{Asymptotic preserving property of the scheme}\label{sec.AP}

This section is dedicated to the proof of Theorem~\ref{thm.AP}. For this purpose we will first adapt at the discrete level the estimate~\eqref{estim.dtv}. We define for every $0 \leq n\leq N_T-1$,
\begin{align}\label{6.defw}
    w^n_K = \frac{v^{n+1}_K-v^n_K}{\Delta t}, \quad \forall K \in \T.
\end{align}
Then, using the equation~\eqref{2.schv}, we deduce that $w^n_K$ satisfies for all $1 \leq n \leq N_T-1$,
\begin{align}\label{6.schw}
\quad \eps \, \m(K) \frac{w^n_K-w^{n-1}_K}{\Delta t} + \beta \m(K) w^n_K = \delta \sum_{K \in \E_K} \tau_\sigma \Dd_{K,\sigma} w^n + \m(K) \frac{u^n_K-u^{n-1}_K}{\Delta t}, \quad \forall K \in \T.
\end{align}
Throughout this section, for each $0\leq n \leq N_T-1$, we introduce the unique element $z^n \in \widetilde{\Hi}_\T$ such that
\begin{align*}
    -\sum_{\sigma \in \E_K} \tau_\sigma \, \Dd_{K,\sigma} z^n = \m(K) (w^n_K-\langle w^n \rangle), \quad \forall K \in \T.
\end{align*}

\begin{proposition}[Discrete counterpart of~\eqref{estim.dtv}] \label{prop.dtv}
Assume \emph{\textbf{(H1)}--\textbf{(H4)}}. Then, for each $1 \leq n \leq N_T-1$, the following estimate hold
\begin{multline}\label{6.unifdtv}
    \frac{\delta}{2} \sum_{K\in \T} \m(K) \, \left(w^n_K-\langle w^n\rangle\right)^2 + \frac{\eps}{2} \sum_{\sigma \in \E} \tau_\sigma \frac{\left(\Dd_\sigma z^n\right)^2-\left(\Dd_\sigma z^{n-1}\right)^2}{\Delta t}\\ + \beta \sum_{\sigma \in \E} \tau_\sigma \left(\Dd_\sigma z^n \right)^2 \leq \frac{1}{2\delta} \sum_{K\in\T} \m(K)  \left(u^n_K \gamma(v^n_K)-\langle u^n \gamma(v^n)\rangle\right)^2.
\end{multline}
\end{proposition}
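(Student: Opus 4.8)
The plan is to mimic at the discrete level the formal computation leading to~\eqref{estim.dtv}, with the discrete operator $\mathcal{K}$ replaced by the solution operator $w\mapsto z$ of the discrete Neumann-Laplace problem~\eqref{4.aux1}. First I would test the evolution equation~\eqref{6.schw} for $w^n$ against $\Delta t\,(w^n_K-\langle w^n\rangle)$ and sum over $K\in\T$. The term $\beta\,\m(K)w^n_K$ contributes, after subtracting the mean (which is legitimate since $\sum_K\m(K)(w^n_K-\langle w^n\rangle)=0$), the quantity $\beta\,\Delta t\sum_K\m(K)(w^n_K-\langle w^n\rangle)^2$. For the $\eps$-term and the diffusion term I would instead rewrite them using the auxiliary problem: precisely, I would test~\eqref{6.schw} against $\Delta t\,z^n_K$ after first noting that the right-hand side $\m(K)(u^n_K-u^{n-1}_K)/\Delta t$ has zero mean by the mass conservation~\eqref{2.massu}, so the whole equation~\eqref{6.schw} can be "inverted" through the $z$-variable. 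This is the discrete analogue of applying $\mathcal{K}=(-\Delta)^{-1}$ to the differentiated equation.

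The key algebraic steps are then: (i) for the source term, using the scheme~\eqref{2.schu} for $u^n$, rewrite $\m(K)(u^n_K-u^{n-1}_K)/\Delta t$ as $\sum_{\sigma\in\E_K}\tau_\sigma\Dd_{K,\sigma}(u^n\gamma(v^n))$, then perform a discrete integration by parts against $z^n$ and recognize, via the defining relation for $z^n$, the product $\sum_K\m(K)(w^n_K-\langle w^n\rangle)(u^n_K\gamma(v^n_K)-\langle u^n\gamma(v^n)\rangle)$; apply Young's inequality with weight $\delta$ to split it into $\frac{\delta}{2}\sum_K\m(K)(w^n_K-\langle w^n\rangle)^2$ plus $\frac{1}{2\delta}\sum_K\m(K)(u^n_K\gamma(v^n_K)-\langle u^n\gamma(v^n)\rangle)^2$; (ii) for the diffusion term $\delta\sum_{\sigma\in\E_K}\tau_\sigma\Dd_{K,\sigma}w^n$ tested against $z^n$, a double discrete integration by parts (once to move the difference onto $z^n$, once using the definition of $z^n$) turns it into $-\delta\sum_K\m(K)(w^n_K-\langle w^n\rangle)^2$ — it combines with the $\frac{\delta}{2}$ from step (i) to leave exactly $\frac{\delta}{2}\sum_K\m(K)(w^n_K-\langle w^n\rangle)^2$ on the left; (iii) for the $\eps$-term $\eps\,\m(K)(w^n_K-w^{n-1}_K)/\Delta t$ tested against $z^n$, use the defining relations for $z^n$ and $z^{n-1}$ and the summation-by-parts identity $\sum_\sigma\tau_\sigma\Dd_{K,\sigma}(z^n-z^{n-1})\Dd_{K,\sigma}z^n\geq\frac12\sum_\sigma\tau_\sigma((\Dd_\sigma z^n)^2-(\Dd_\sigma z^{n-1})^2)$, coming from $a^2-b^2\le 2a(a-b)$ with $a=\Dd_{K,\sigma}z^n$, $b=\Dd_{K,\sigma}z^{n-1}$; after division by $\Delta t$ this yields the telescoping term $\frac{\eps}{2}\sum_\sigma\tau_\sigma\big((\Dd_\sigma z^n)^2-(\Dd_\sigma z^{n-1})^2\big)/\Delta t$. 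Collecting (i)--(iii) and dividing through by $\Delta t$ gives exactly~\eqref{6.unifdtv}.

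The delicate point — and the main obstacle — is keeping the bookkeeping of which equation is tested against which multiplier consistent: because one multiplies~\eqref{6.schw} by $z^n$ for the $\eps$- and diffusion terms but needs the $w^n-\langle w^n\rangle$ structure for the $\beta$-term and for the right-hand side, one must be careful that the relation $\sum_K\m(K)(w^n_K-\langle w^n\rangle)z^n_K=\sum_\sigma\tau_\sigma(\Dd_\sigma z^n)^2$ is used coherently; equivalently, one should think of all the inner products as being in the discrete $(H^1_\Diamond)'$ pairing realized through $z$. A second mild subtlety is that~\eqref{6.schw} is only valid for $1\le n\le N_T-1$ (the case $n=0$ would require a multiplier $z^{-1}$ and the initial layer), which is why the statement is restricted to that range; no estimate at $n=0$ is needed here. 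The inequality $\gamma\le 1$ is not used in this proposition — the right-hand side is kept as is, the boundedness of $u^n\gamma(v^n)$ in $L^2$ being invoked only later when summing~\eqref{6.unifdtv} over $n$ together with the discrete duality estimate~\eqref{4.estdual}.
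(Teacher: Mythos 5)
Your steps (i)--(iii) reproduce the paper's proof exactly: multiply \eqref{6.schw} by $z^n_K$, substitute the scheme \eqref{2.schu} for the source term, use discrete summation by parts together with the defining relation \eqref{4.aux1} for $z^n$, the elementary inequality $a^2-b^2\le 2a(a-b)$ for the $\eps$-term, and Young's inequality with weight $\delta$, which yields \eqref{6.unifdtv} after combining the two sides. The only point to correct is your opening move: testing \eqref{6.schw} against $w^n_K-\langle w^n\rangle$ would produce $\beta\sum_{K}\m(K)(w^n_K-\langle w^n\rangle)^2$, which is not the $\beta$-term appearing in \eqref{6.unifdtv}, and the ``bookkeeping obstacle'' you flag disappears entirely because the whole equation is tested against $z^n$ alone --- since $\langle z^n\rangle=0$, the term $\beta\sum_{K}\m(K)w^n_K z^n_K$ equals $\beta\sum_{K}\m(K)(w^n_K-\langle w^n\rangle)z^n_K=\beta\sum_{\sigma}\tau_\sigma(\Dd_\sigma z^n)^2$ by \eqref{4.aux1} and summation by parts, which is precisely the identity you quote, so no second multiplier is needed.
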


\begin{proof}
First, we rewrite for every $K \in \T$ the equation~\eqref{6.schw} as
\begin{align*}\label{App.schw}
\eps\, \m(K) \frac{w^n_K-w^{n-1}_K}{\Delta t} &+ \beta \m(K) w^n_K\\
&= \delta \sum_{ \sigma \in \E_K} \tau_\sigma \Dd_{K,\sigma} (w^n-\langle w^n \rangle) + \sum_{\sigma \in \E_K} \tau_\sigma \Dd_{K,\sigma}(u^n \gamma(v^n)-\langle u^n \gamma(v^n)\rangle),
\end{align*}
multiplying this relation by $z^n_K$ and summing over $K\in \T$ we obtain the equation $I_1=I_2$ where
\begin{align*}
    I_1 &=\eps \sum_{K \in \T} \m(K) \frac{w^n_K-w^{n-1}_K}{\Delta t} \, z^n_K + \beta \sum_{K\in\T} \m(K) w^n_K \, z^n_K,\\
    I_2 &=\delta \sum_{K\in\T} \sum_{\sigma \in \E_K} \tau_\sigma \Dd_{K,\sigma} (w^n-\langle w^n{\rangle}) \, { z^n_K} + \sum_{K \in \T} \sum_{\sigma \in \E_K} \tau_\sigma \Dd_{K,\sigma}(u^n \gamma(v^n)-\langle u^n \gamma v^n\rangle) z^n_K.
\end{align*}
For $I_1$, since $\langle z^n \rangle =0$ we have
\begin{align*}
    I_1 &=\eps \sum_{K \in \T} \m(K) \frac{(w^n_K-\langle w^n \rangle)-(w^{n-1}_K-\langle w^{n-1}\rangle)}{\Delta t} \, z^n_K + \beta \sum_{K\in\T} \m(K) (w^n_K-\langle w^n\rangle) \, z^n_K\\
    &=- \frac{\eps}{\Delta t}\sum_{K\in\T} \left(\sum_{\sigma \in \E_K} \tau_\sigma \left( \Dd_{K,\sigma} z^n - \Dd_{K,\sigma} z^{n-1} \right) \right) z^n_K - \beta \sum_{K\in\T} \left(\sum_{\sigma \in \E_K} \tau_\sigma \Dd_{K,\sigma} z^n \right) z^n_K.
\end{align*}
Therefore, thanks to a summation by parts we have
\begin{align*}
    I_1 = \frac{\eps}{\Delta t} \sum_{\substack{\sigma \in \E_{{\rm int}}\\\sigma=K|L}} \tau_{\sigma} \left(\Dd_{K,\sigma} z^n - \Dd_{K,\sigma} z^{n-1} \right) \Dd_{K,\sigma} z^n + \beta \sum_{\substack{\sigma \in \E_{{\rm int}}\\\sigma=K|L}} \tau_\sigma \left(\Dd_\sigma z^n \right)^2.
\end{align*}
It remains to apply the elementary inequality $2a(a-b) \geq a^2-b^2$ for every $a$ and $b \in \R$ and we get
\begin{equation}\label{6.I1}
    I_1 \geq \frac{\eps}{2} \sum_{\sigma \in \E} \tau_\sigma \frac{\left(\Dd_\sigma z^n\right)^2-\left(\Dd_\sigma z^{n-1}\right)^2}{\Delta t} + \beta \sum_{\sigma \in \E} \tau_\sigma \left(\Dd_\sigma z^n \right)^2.
\end{equation}
Now, for $I_2$ we use two discrete integration by parts to obtain
\begin{align*}
    I_2 &= \delta \sum_{K\in\T} \left(w^n_K - \langle w^n \rangle \right) \sum_{\sigma \in \E_K} \tau_\sigma \Dd_{K,\sigma} z^n + \sum_{K\in\T} \left(u^n_K \gamma(v^n_K)-\langle u^n \gamma(v^n)\rangle\right) \sum_{\sigma \in \E_K} \tau_\sigma \Dd_{K,\sigma} z^n\\
    &=-\delta \sum_{K\in \T} \m(K) \, \left(w^n_K-\langle w^n\rangle\right)^2 - \sum_{K\in\T} \m(K)  \left(u^n_K \gamma(v^n_K)-\langle u^n \gamma(v^n)\rangle\right) \, (w^n_K - \langle w^n \rangle ).
\end{align*}
Hence, thanks to Young's inequality we end up with
\begin{align}\label{6.I2}
    I_2 \leq -\frac{\delta}{2} \sum_{K\in \T} \m(K) \, \left(w^n_K-\langle w^n\rangle\right)^2 +\frac{1}{2\delta} \sum_{K\in\T} \m(K)  \left(u^n_K \gamma(v^n_K)-\langle u^n \gamma(v^n)\rangle\right)^2.
\end{align}
Thus, combining~\eqref{6.I1} and~\eqref{6.I2} yields~\eqref{6.unifdtv}.
\end{proof}

\begin{proposition}\label{prop.massw}
Assume \emph{\textbf{(H1)}--\textbf{(H4)}}. Then, the following estimates hold:
when $\eps>0$,
\begin{multline}\label{6.massw}
    \sum_{n=0}^{N_T-1} \Delta t \sum_{K\in\T} \m(K) \, |w^n_K|^2 \leq \dfrac{\m(\Omega)}{\beta} \,\left( \dfrac{\langle u^0 \rangle - \beta \langle v^0\rangle}{\sqrt{\eps}}\right)^2 \, \left(1-\left(\dfrac{1}{1+\frac{\beta \Delta t}{\eps}}\right)^{2N_T} \right)\\
    +\frac{2}{\delta^2} \sum_{n=0}^{N_T} \Delta t \| u^n \gamma(v^n) - \langle u^n \gamma(v^n) \rangle \|^2_{L^2(\Omega)},
\end{multline}
and when $\eps=0$,
\begin{equation}\label{6.masswQS}
    \sum_{n=0}^{N_T-1} \Delta t \sum_{K\in\T} \m(K) \, |w^n_K|^2
    \leq \frac{2}{\delta^2} \sum_{n=0}^{N_T} \Delta t \| u^n \gamma(v^n) - \langle u^n \gamma(v^n) \rangle \|^2_{L^2(\Omega)}.
\end{equation}
\end{proposition}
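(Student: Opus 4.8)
The plan is to split, for each $n$, the squared discrete $L^2$ norm of $w^n$ into its spatial-mean and zero-mean contributions,
\begin{equation*}
\sum_{K\in\T}\m(K)|w^n_K|^2=\sum_{K\in\T}\m(K)\bigl(w^n_K-\langle w^n\rangle\bigr)^2+\m(\Omega)\langle w^n\rangle^2,
\end{equation*}
the cross term vanishing since $\sum_{K}\m(K)(w^n_K-\langle w^n\rangle)=0$, and to estimate the two pieces separately. For the mean part I would use the explicit formula~\eqref{2.massv}. Setting $r:=\eps/(\eps+\beta\Delta t)=(1+\beta\Delta t/\eps)^{-1}$, differencing~\eqref{2.massv} gives, for $0\le n\le N_T-1$ and $\eps>0$,
\begin{equation*}
\langle w^n\rangle=\frac{\langle v^{n+1}\rangle-\langle v^n\rangle}{\Delta t}=\frac{r^n}{\eps+\beta\Delta t}\bigl(\langle u^0\rangle-\beta\langle v^0\rangle\bigr),
\end{equation*}
so that $\sum_{n=0}^{N_T-1}\Delta t\,\m(\Omega)\langle w^n\rangle^2$ is a geometric series summing to $\tfrac{\m(\Omega)}{\beta(2\eps+\beta\Delta t)}(\langle u^0\rangle-\beta\langle v^0\rangle)^2(1-r^{2N_T})$; bounding $2\eps+\beta\Delta t\ge\eps$ yields exactly the first term on the right-hand side of~\eqref{6.massw}. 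When $\eps=0$, with the natural initialization of Remark~\ref{rk:byproduct} (take $v^0$ to solve the discrete stationary problem with source $u^0$) one has $v^1=v^0$, hence $w^0=0$ and $\langle v^n\rangle=\langle u^0\rangle/\beta$ for $n\ge1$, so the mean contribution drops out, consistently with~\eqref{6.masswQS}.

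For the fluctuation part I would invoke Proposition~\ref{prop.dtv}: multiply~\eqref{6.unifdtv} by $2\Delta t/\delta$ and sum over $n=1,\dots,N_T-1$. The $\eps$-term telescopes into $\tfrac{\eps}{\delta}\bigl(N(w^{N_T-1}-\langle w^{N_T-1}\rangle)^2-N(w^{0}-\langle w^{0}\rangle)^2\bigr)$; discarding this first contribution together with the accumulated $\tfrac{2\beta\Delta t}{\delta}\sum_n N(w^n-\langle w^n\rangle)^2$, which are nonnegative, one is left with
\begin{equation*}
\sum_{n=1}^{N_T-1}\Delta t\sum_{K\in\T}\m(K)\bigl(w^n_K-\langle w^n\rangle\bigr)^2\le\frac{1}{\delta^2}\sum_{n=1}^{N_T-1}\Delta t\,\bigl\|u^n\gamma(v^n)-\langle u^n\gamma(v^n)\rangle\bigr\|_{L^2(\Omega)}^2+\frac{\eps}{\delta}N\bigl(w^{0}-\langle w^{0}\rangle\bigr)^2.
\end{equation*}
Adding back the $\m(\Omega)\langle w^n\rangle^2$ terms handled above then assembles the full left-hand sides of~\eqref{6.massw} and~\eqref{6.masswQS}.

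The main obstacle is the first time step: one must still account for the missing left-hand term $\Delta t\sum_K\m(K)(w^0_K-\langle w^0\rangle)^2$ and for the boundary term $\tfrac{\eps}{\delta}N(w^0-\langle w^0\rangle)^2$ produced by the telescoping. For $\eps=0$ this is vacuous since $w^0=0$; for $\eps>0$ it needs a separate estimate, which I would extract directly from the $n=1$ equation~\eqref{2.schv}. After subtracting spatial means the latter reads
\begin{equation*}
\eps\,\m(K)\bigl(w^0_K-\langle w^0\rangle\bigr)=\delta\sum_{\sigma\in\E_K}\tau_\sigma\Dd_{K,\sigma}v^1-\beta\,\m(K)\bigl(v^1_K-\langle v^1\rangle\bigr)+\m(K)\bigl(u^0_K-\langle u^0\rangle\bigr),
\end{equation*}
and testing it against $z^0$, respectively against $w^0-\langle w^0\rangle$, and using discrete integration by parts, the Cauchy--Schwarz and discrete Poincaré--Wirtinger inequalities together with the uniform $H^1$ bound on $v^1$ from Corollary~\ref{coro.unif1}, gives control of $N(w^0-\langle w^0\rangle)$ and $\|w^0-\langle w^0\rangle\|_{L^2(\Omega)}$; these are absorbed into the extra $n=0$ and $n=N_T$ terms that the statement carries on its right-hand side (this is precisely why the sums in~\eqref{6.massw}--\eqref{6.masswQS} run over $0\le n\le N_T$ and why the constant is $2/\delta^2$ rather than $1/\delta^2$). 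Collecting the mean and fluctuation estimates and summing over $n$ then yields~\eqref{6.massw} and~\eqref{6.masswQS}. The delicate point worth stressing is exactly this initial step, where the absence of a ``previous'' value of $w$ breaks the telescoping and one has to fall back on the scheme itself.
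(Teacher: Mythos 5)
Your overall strategy coincides with the paper's: split $w^n$ into its mean and zero-mean parts, compute the mean contribution exactly from the discrete mass relation \eqref{2.massv} (your geometric-series computation and the bound $2\eps+\beta\Delta t\ge \eps$ reproduce the paper's \eqref{6.massw2}), and control the zero-mean part through Proposition~\ref{prop.dtv}. Your handling of the fluctuation part for $1\le n\le N_T-1$ --- multiply \eqref{6.unifdtv} by $2\Delta t/\delta$, sum in $n$, telescope the $\eps$-term and discard the nonnegative contributions --- is in fact more careful than the paper's write-up, which applies the per-step bound for every $0\le n\le N_T-1$ and drops the (not sign-definite) $\eps$-difference term without summing.

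The genuine gap is your treatment of the index $n=0$. Testing the step-one equation \eqref{2.schv} against $w^0-\langle w^0\rangle$ controls only $\eps\sum_K\m(K)(w^0_K-\langle w^0\rangle)^2$, and testing against $z^0$ controls $\eps N(w^0-\langle w^0\rangle)^2$; in either case the resulting bounds involve $\|u^0\|_{L^2(\Omega)}$ and $\|v^1\|_{1,2,\T}$ (through Corollary~\ref{coro.unif1}) together with a factor $1/\eps$, and such quantities are not majorized by the specific right-hand sides of \eqref{6.massw}--\eqref{6.masswQS}, which contain only the means $\langle u^0\rangle$, $\langle v^0\rangle$ and the fluctuations of $u^n\gamma(v^n)$. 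Hence the claim that the $n=0$ contribution and the boundary term $\frac{\eps}{\delta}N(w^0-\langle w^0\rangle)^2$ are ``absorbed into the extra $n=0$ and $n=N_T$ terms and the factor $2/\delta^2$'' is not justified; nor is that the origin of these features in the paper, where the factor $2$ comes from the Young splitting $|w^n_K|^2\le 2(w^n_K-\langle w^n\rangle)^2+2\langle w^n\rangle^2$ in \eqref{6.massw1} and the upper summation index $N_T$ is merely a crude enlargement of the right-hand side. (For fairness: the paper's own proof does not supply a separate $n=0$ argument either, since it invokes \eqref{6.unifdtv} outside its stated range $1\le n\le N_T-1$; and your $\eps=0$ fix relies on the initialization convention of Remark~\ref{rk:byproduct}, which is not among the hypotheses of the proposition.) As written, your argument establishes the estimate with the left-hand sum starting at $n=1$, but not the stated inequalities including the $n=0$ summand.
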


\begin{proof}
Thanks to the estimate~\eqref{6.unifdtv} and Young's inequality we have for each $0\leq n \leq N_T-1$
\begin{align*}
    \frac{\delta}{4}\sum_{K\in\T} \m(K) \, |w^n_K|^2 \leq \frac{\delta}{2} \, \m(\Omega) \, \langle w^n \rangle^2 + \frac{1}{2\delta} \sum_{K\in\T} \m(K) \, \left(u^n_K \gamma(v^n_K) - \langle u^n \gamma(v^n) \rangle \right)^2,
\end{align*}
so that
\begin{equation}\begin{split}\label{6.massw1}
    \sum_{n=0}^{N_T-1} \Delta t &\sum_{K\in \T  } \m(K) \, |w^n_K|^2 \\
    &\leq 2 \m(\Omega) \sum_{n=0}^{N_T-1} \Delta t \, \langle w^n \rangle^2 + \frac{2}{\delta^2} \sum_{n=0}^{N_T} \Delta t \| u^n \gamma(v^n) - \langle u^n \gamma(v^n) \rangle \|^2_{L^2(\Omega)}.
\end{split}\end{equation}
Let us now estimate the first term in the right hand side of~\eqref{6.massw1}. For this purpose, we use~\eqref{6.schw} and the mass conservation~\eqref{2.massu} to compute for any $1\leq n \leq N_T-1$,
\begin{align*}
    (\eps + \beta  \Delta t) \, \langle w^n \rangle = \eps\langle w^{n-1} \rangle.
\end{align*}
When $\eps=0$ we get that $\langle w^n \rangle=0$ for any $1\leq n \leq N_T-1$, so that~\eqref{6.masswQS} is a straightforward consequence of~\eqref{6.massw1}.
Now when $\eps>0$, a direct induction leads to, for $1\leq n \leq N_T-1$,
\begin{align*}
    \langle w^n \rangle = \dfrac{\langle w^{0} \rangle}{(1+\xi)^n},
\end{align*}
where $\xi :=\frac{\beta  \Delta t}\eps$. Besides, by definition of $w$ and using~\eqref{2.massv} at $n=1$ we have
\begin{align*}
    \langle w^0 \rangle = \frac1{\Delta t} \left( \frac{\langle u^0 \rangle}{\beta}   + \dfrac{1}{1+\xi} \left(\langle v^0\rangle -\frac{\langle u^0 \rangle}{\beta} \right) - \langle v^0 \rangle \right)  = \dfrac{\langle u^0 \rangle - \beta \langle v^0 \rangle}{\eps} \, \dfrac{1}{1+\xi}.
\end{align*}
Thus, we can compute
\begin{align*}
    \sum_{n=0}^{N_T-1} \Delta t \, \langle w^n \rangle^2 &= \dfrac{(\langle u^0 \rangle - \beta \langle v^0\rangle)^2}{\eps^2} \, \Delta t \sum_{n=0}^{N_T-1} \dfrac{1}{\left(1+\xi\right)^{2n+2}},
\end{align*}
where
\begin{align*}
\Delta t \sum_{n=0}^{N_T-1} \dfrac{1}{\left(1+\xi\right)^{2n+2}}
= \dfrac{\Delta t}{\left(1+\xi\right)^2} \, \dfrac{1-\frac{1}{(1+\xi)^{2N_T}} }{1-\frac{1}{(1+\xi)^2}}
= \dfrac{\Delta t}{2\xi} \, \dfrac{1-\frac{1}{(1+\xi)^{2N_T}}}{1+\frac{\xi}{2}}
= \dfrac{\eps}{2\beta} \, \dfrac{1-\frac{1}{(1+\xi)^{2N_T}}}{1+\frac{\xi}{2}}.
\end{align*}
Hence, it holds
\begin{align}\label{6.massw2}
    2 \m(\Omega) \sum_{n=0}^{N_T-1} \Delta t \, \langle w^n \rangle^2 \leq \dfrac{\m(\Omega)}{\beta} \,\left( \dfrac{\langle u^0 \rangle - \beta \langle v^0\rangle}{\sqrt{\eps}}\right)^2 \, \left(1-\left(\dfrac{1}{1+\frac{\beta \Delta t}{\eps}}\right)^{2N_T} \right).
\end{align}
Now, we combine the estimates~\eqref{6.massw1} and~\eqref{6.massw2} and we obtain~\eqref{6.massw}.
\end{proof}

\begin{corollary}\label{coro.estimdtv_unifeps}
Assume \emph{\textbf{(H1)}--\textbf{(H5)}} and let $\D$ be an admissible mesh satisfying~\eqref{2.dd}. Then, there exists a constant $C>0$ only depending on $\delta$, $\beta$, $T$, $\Omega$ and $\zeta$ such that
\begin{align}
    \sum_{n=0}^{N_T-1} \Delta t \sum_{K\in\T} \m(K) \, \left|\dfrac{v^{n+1}_K-v^n_K}{\Delta t}\right|^2 \leq  C \, \left( O(1) + 1\right).
\end{align}
\end{corollary}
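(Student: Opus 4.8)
The plan is to read the estimate off from Proposition~\ref{prop.massw}, and then to show that under \textbf{(H5)} its right-hand side is bounded uniformly with respect to $\eps$ and the discretization $\D$. First I would observe that the left-hand side of the corollary is precisely $\sum_{n=0}^{N_T-1}\Delta t\sum_{K\in\T}\m(K)|w^n_K|^2$ with $w^n_K$ as in~\eqref{6.defw}, so that Proposition~\ref{prop.massw} applies verbatim: in the case $\eps>0$ we invoke~\eqref{6.massw}, and in the quasi-stationary case $\eps=0$ we invoke the simpler~\eqref{6.masswQS} (recall that when $\eps=0$ the well-preparedness condition~\eqref{cdtintau} is automatically satisfied, cf.~Remark~\ref{rk:byproduct}). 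It then remains to bound the two terms on the right-hand side of~\eqref{6.massw}.

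For the first term I would simply estimate $1-\big(1+\tfrac{\beta\Delta t}{\eps}\big)^{-2N_T}\le 1$, so that it is at most $\tfrac{\m(\Omega)}{\beta}\big(\tfrac{\langle u^0\rangle-\beta\langle v^0\rangle}{\sqrt{\eps}}\big)^2$. By the well-preparedness assumption~\eqref{cdtintau} in \textbf{(H5)}, the quantity $\big(\tfrac{\langle u^0\rangle-\beta\langle v^0\rangle}{\sqrt{\eps}}\big)^2$ is $O(1)$ as $\eps\to 0$; this is what accounts for the ``$O(1)$'' in the statement. When $\eps=0$ this term is simply absent from~\eqref{6.masswQS}.

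For the second term, $\tfrac{2}{\delta^2}\sum_{n=0}^{N_T}\Delta t\,\|u^n\gamma(v^n)-\langle u^n\gamma(v^n)\rangle\|^2_{L^2(\Omega)}$, I would first use that subtracting its mean cannot increase the $L^2$ norm of a function, so that $\|u^n\gamma(v^n)-\langle u^n\gamma(v^n)\rangle\|^2_{L^2(\Omega)}\le\|u^n\gamma(v^n)\|^2_{L^2(\Omega)}$, and then that $0<\gamma\le 1$ yields $\gamma(v^n_K)^2\le\gamma(v^n_K)$, hence $\|u^n\gamma(v^n)\|^2_{L^2(\Omega)}\le\|u^n\sqrt{\gamma(v^n)}\|^2_{L^2(\Omega)}$. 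Summing over $n=1,\dots,N_T$, the discrete duality estimate~\eqref{4.estdual} of Proposition~\ref{prop.dual} then bounds $\sum_{n=1}^{N_T}\Delta t\,\|u^n\sqrt{\gamma(v^n)}\|^2_{L^2(\Omega)}$ by $\tfrac12 C\|u^0-\langle u^0\rangle\|^2_{L^2(\Omega)}+T\,\m(\Omega)\langle u^0\rangle^2$, which is uniformly bounded in $\eps$ and $\D$ because \textbf{(H5)} provides a uniform bound on $\|u^0\|_{L^2(\Omega)}$. The only index not covered by~\eqref{4.estdual} is $n=0$, which I would handle by hand via $\Delta t\,\|u^0\gamma(v^0)-\langle u^0\gamma(v^0)\rangle\|^2_{L^2(\Omega)}\le T\,\|u^0\|^2_{L^2(\Omega)}$, again uniformly bounded by \textbf{(H5)}.

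Putting the three estimates together yields the asserted bound, where the additive ``$+1$'' in the statement collects the uniformly bounded contributions of the duality estimate and of the $n=0$ term, and the constant $C$ depends only on $\delta$, $\beta$, $T$, $\Omega$ and $\zeta$ --- the last two entering solely through the discrete Poincar\'e--Wirtinger constant appearing in Proposition~\ref{prop.dual}. I do not expect any real obstacle here: the corollary is essentially a bookkeeping consequence of Propositions~\ref{prop.dual} and~\ref{prop.massw}. The one point that requires a little attention is the $n=0$ term, which falls outside the summation range of the duality estimate and must be estimated separately, together with the bookkeeping needed to see that the two distinct uses of \textbf{(H5)} --- the size condition~\eqref{cdtintau} and the uniform $L^2$ bound on $u^0$ --- are exactly what keeps the right-hand side from blowing up as $\eps\to 0$.
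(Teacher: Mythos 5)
Your proof is correct and follows essentially the same route as the paper, which simply invokes Proposition~\ref{prop.massw}, the definition~\eqref{6.defw}, the duality estimate~\eqref{4.estdual} and \textbf{(H5)}; you have merely made explicit the intermediate inequalities (mean subtraction, $\gamma^2\le\gamma$, and the separate treatment of the $n=0$ term that falls outside the summation range of~\eqref{4.estdual}) that the paper leaves implicit.
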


\begin{proof}
The result is a direct consequence of Proposition~\ref{prop.massw}, the definition \eqref{6.defw}, the duality estimate~\eqref{4.estdual} and assumption \textbf{(H5)}.
\end{proof}

We can now make use of these uniform estimates to show the AP property of the scheme.

\begin{proof}[Proof of Theorem~\ref{thm.AP}]
The result is a direct consequence of the previous uniform estimates and a slight adaptation of the computations done in Section~\ref{sec.limitscheme} in order to identify the limit functions as weak solutions to the quasi-stationary system in the sense of Definition~\ref{def.weak}.
\end{proof}

\section{Numerical experiments}\label{sec.numexp}

\subsection{Implementation}\label{sec:imp}

The scheme was implemented in dimension $d=1$ and $d=2$ using Matlab. The code is available at {\url{https://gitlab.inria.fr/herda/fvlocsens}}. For bi-dimensional simulations, we use unstructured triangular meshes generated by Gmsh with a Delaunay algorithm, interfaced with Matlab through the code available at \url{https://gitlab.inria.fr/herda/fvmeshes}.  As the scheme is linearly implicit it involves one resolution of linear system per species per time step. The matrix for the equation on $u$ depends on the second species and must be reassembled at each time step. In order to optimize the computational cost, the mass and the stiffness matrices should be preassembled outside of the time loop.

\subsection{Testcase 1. Convergence and entropy decay}
In this first testcase we illustrate and complement our theoretical convergence result with an experimental convergence test in dimension $d=1$. The domain is taken to be the segment $\Omega = [0,1]$, on which we consider the system~\eqref{1.equ}-\eqref{1.IC} endowed with the parameters 
\[
\varepsilon = 10^{-3},\quad \delta = 10^{-3},\quad \beta = 0.1,
\]
and the initial conditions 
\[
u^0(x) = 15x^2(1-x)^2,\quad  v^0(x) = 0.
\]
{We solve the scheme~\eqref{2.schu}-\eqref{2.schv} to compute an approximate solution of the initial system at final time $T = 20$. We run the scheme for a sequence of decreasing space steps. More precisely the number of points is $N_k = 50\cdot2^{k}$ for $k=0,\dots,6$. The time step $\Delta t = 10^{-4}$ has been experimentally chosen small enough so that spatial errors dominate. As we do not know the analytical solution for this system, we take as reference solution the computed solution on the finest mesh ($3200$ cells). Then the error for the $k$-th mesh is taken to be the $L^2$ or $L^\infty$ distance between the $k$-th solution and the reference solution projected on the $k$-th mesh. The results are reported in Table~\ref{tab:convergence}.}
\begin{table}[!ht]

\begin{tabular}{c|c||c|c||c|c}
$k$&$N_k$ &$L^2$ error      &order&$L^\infty$ error & order\\\hline
$0$&$50$  &$6.4\cdot10^{-2}$&     &$1.0\cdot10^{-1}$&\\
$1$&$100$ &$1.5\cdot10^{-2}$&$2.1$&$2.3\cdot10^{-2}$&$2.2$\\
$2$&$200$ &$3.7\cdot10^{-3}$&$2.0$&$6.7\cdot10^{-3}$&$1.8$\\
$3$&$400$ &$9.1\cdot10^{-4}$&$2.0$&$1.7\cdot10^{-3}$&$2.0$\\
$4$&$800$ &$2.2\cdot10^{-4}$&$2.1$&$4.0\cdot10^{-4}$&$2.1$
\end{tabular}
\caption{Testcase 1. Estimated errors and orders of convergence in space at final time $T =20$. Reference solution is for $3200$ cells and $\Delta t = 10^{-4}$.}
\label{tab:convergence}
\end{table}
Convergence in the space variable is experimentally observed. This illustrates the result of Theorem~\ref{thm.conv} and suggests, as expected {(see~\cite{DroNat18_super_conv})}, a second order accuracy in space. {In fact, while Theorem~\ref{thm.conv} predicts a weak convergence in $L^1$ (for a subsequence), the experimental observations suggest a strong convergence in stronger norms. Here we present only one experiment, but these results were tested on multiple configurations and appear to be robust.}

On the same testcase, for the finest mesh, we report on Figure~\ref{fig:entropy} the time evolution of the entropy and each term in the sum composing it. This illustrates in particular the decay of the entropy as predicted in Theorem~\ref{thm.exi}. It also highlights that the decay of the entropy does not imply a decay for the sum of the Boltzmann entropy for $u$ and the $H^1$ norm of $v$, but is instead associated with a large compensation between that sum and the negative term $-\int_\Omega uv \, \mathrm{d}x$. Finally it illustrates that the problem of establishing a lower bound for the entropy is far from artificial.
\begin{figure}[!ht]
\centering
\includegraphics[width = .6\textwidth]{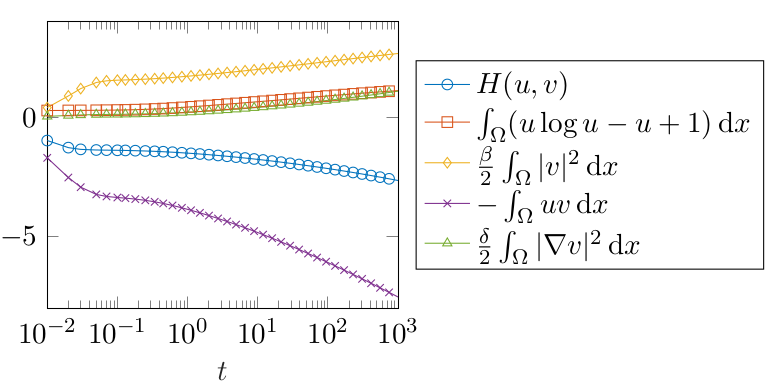}
\caption{Testcase 1. Time evolution of the entropy and related quantities. The mesh is taken as $N=3200$ and $\Delta t = 10^{-2}$.}\label{fig:entropy}
\end{figure}

\subsection{Testcase 2. AP property in the quasi-stationary limit}\label{sec:num_AP}
In this testcase, we investigate the limit $\eps\to 0$. We start again from the domain 
$\Omega = [0,1]$ and the parameters are taken as $\delta=10^{-3}$ and $\beta=0.1$. We consider three different  initial conditions which are more or less well-prepared with respect to the limit equation. The first initial condition, which we refer to as the \emph{strongly well-prepared} (SWP) initial condition, is given by
\begin{equation}\label{eq:strong_well_prepared_init}
u^0(x) = 15x^2(1-x)^2,\quad  -\delta\Delta v^0(x) + \beta v^0(x) = u^0, \quad \beta\langle v^0\rangle = \langle u^0\rangle.
\end{equation}
It satisfies assumption \textbf{(H5)} and moreover $v^0$ and $u^0$ satisfy the limit Poisson equation. The second initial condition, which we shall refer to as the \emph{well-prepared} (WP) initial condition is 
\begin{equation}\label{eq:well_prepared_init}
u^0(x) = 15x^2(1-x)^2,\quad  \beta v^0(x) = \langle u^0\rangle.
\end{equation}
Once again assumption \textbf{(H5)} is satisfied, however $-\delta\Delta v^0(x) + \beta v^0(x) \neq u^0$. The third initial condition is referred to as the \emph{ill-prepared} (IP) initial condition and coincides with that of the previous convergence test case, namely
\begin{equation}\label{eq:ill_prepared_init}
u^0(x) = 15x^2(1-x)^2,\quad  v^0(x) = 0.
\end{equation}
In this case assumption \textbf{(H5)} is not satisfied. We simulate the equation on a uniform one dimensional mesh composed of $3200$ cells with final time $T=100$. As we want to emphasize the initial layer at $t=0$ which may arise depending on the compatibility of $u^0$ and $v^0$ with the limit equation, the time step is taken of the order of $10^{-8}$ until time $10^{-2}$. Then, on the time interval $[10^{-2}, T]$ the time step is taken to be $\Delta t = 10^{-2}$.

 On Figure~\ref{fig:APprofiles}, we show the evolution of the approximate solution at different times, in the cases $\eps = 10^{-1}$ and $\eps = 0$, for the ill-prepared initial condition \eqref{eq:ill_prepared_init}. 
\begin{figure}[!ht]
\centering
\includegraphics[width = .8\textwidth]{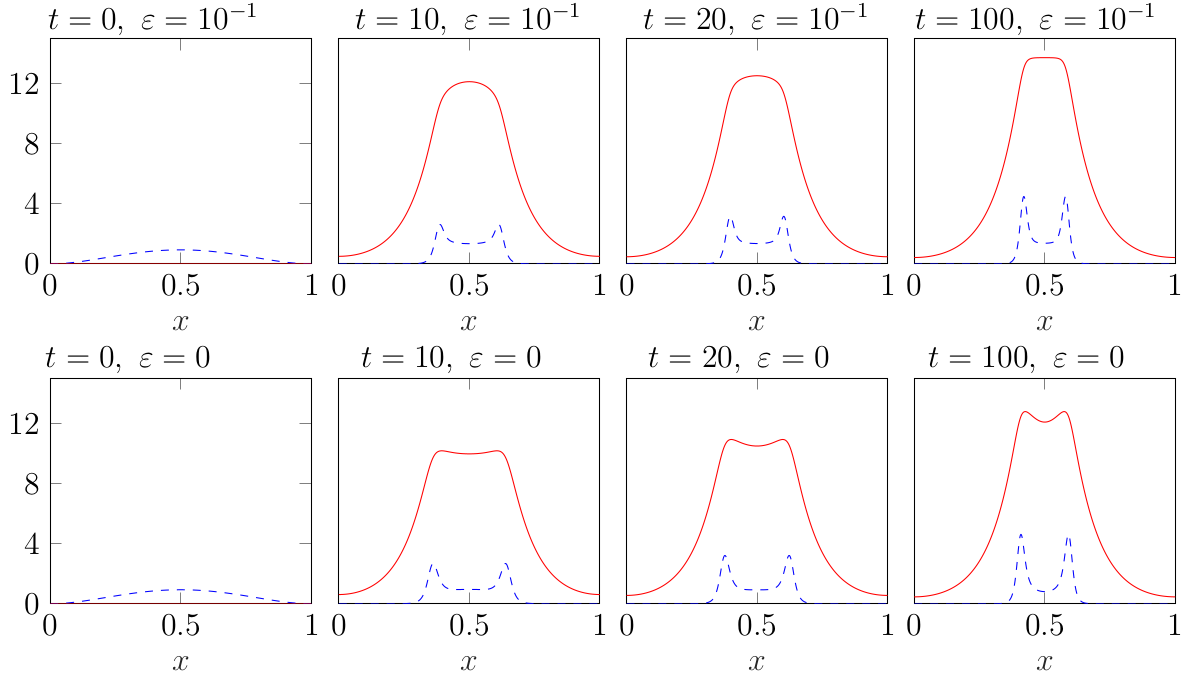}
\caption{Testcase 2. Snapshots of the chemoattractant $v$ (red), and the density $u$ (dashed blue), at time $t\in\{0, 10, 20, 100\}$ for $\eps = 10^{-1}$ (top line) and $\eps = 0$ (bottom line). The initial condition is $u^0(x) = 15x^2(1-x)^2$,  $v^0(x) = 0$.}\label{fig:APprofiles}
\end{figure}

On Figure~\ref{fig:APdtV}, we show $\|\langle\partial_tv\rangle\|_{L^2(0,T)}$ with respect to $\eps$ for the different initial data. This is an illustration of \eqref{eq:evoldtV} and its discrete counterpart \eqref{6.massw2}. Whenever the well-preparedness assumption \textbf{(H5)} is not satisfied then $\|\langle\partial_tv\rangle\|_{L^2(0,T)}$ tends to $+\infty$ as $\eps\to0$.
\begin{figure}[!ht]
\centering
\includegraphics[width = .6\textwidth]{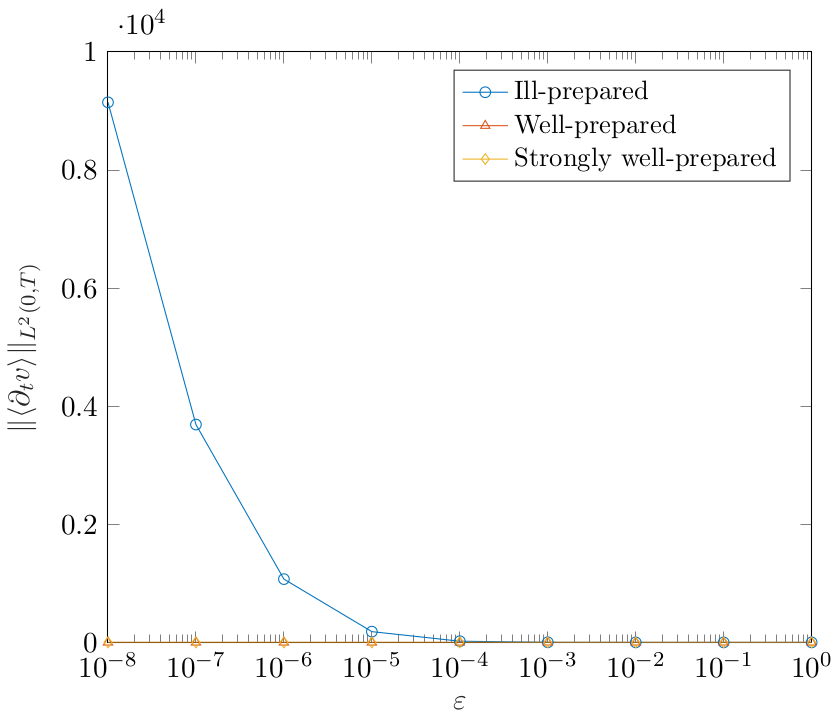}
\caption{Testcase 2. Time derivative of the chemoattractant concentration in the quasi-stationary limit for ill-prepared  \eqref{eq:ill_prepared_init}, well-prepared \eqref{eq:well_prepared_init}, and strongly well-prepared \eqref{eq:strong_well_prepared_init} initial conditions.}\label{fig:APdtV}
\end{figure}

On Figure~\ref{fig:APerrors} we plot the evolution of the error between the solution to the scheme and the solution of the limit ($\eps=0$) scheme as $\eps\to0$ with respect to time for different values of $\eps$ and different initial data. Even if this does not per se illustrate the conclusion Theorem~\ref{thm.AP} (as the time and step size are fixed here), the interesting point is that convergence in $L^2(Q_T)$ norm is satisfied under \textbf{(H5)}. However it is not necessarily satisfied in $L^\infty(0,T; L^2(\Omega))$ norm under the same hypothesis. It holds only for the strongly well-posed initial data \eqref{eq:strong_well_prepared_init} where the limit equation is enforced on the initial data which prevents an initial boundary layer. A more surprising observation is that the $L^2(Q_T)$ convergence is also satisfied for the ill-prepared initial condition. This suggests that our assumption \textbf{(H5)} might be weakened.
\begin{figure}[!ht]
\centering
\includegraphics[width = .8\textwidth]{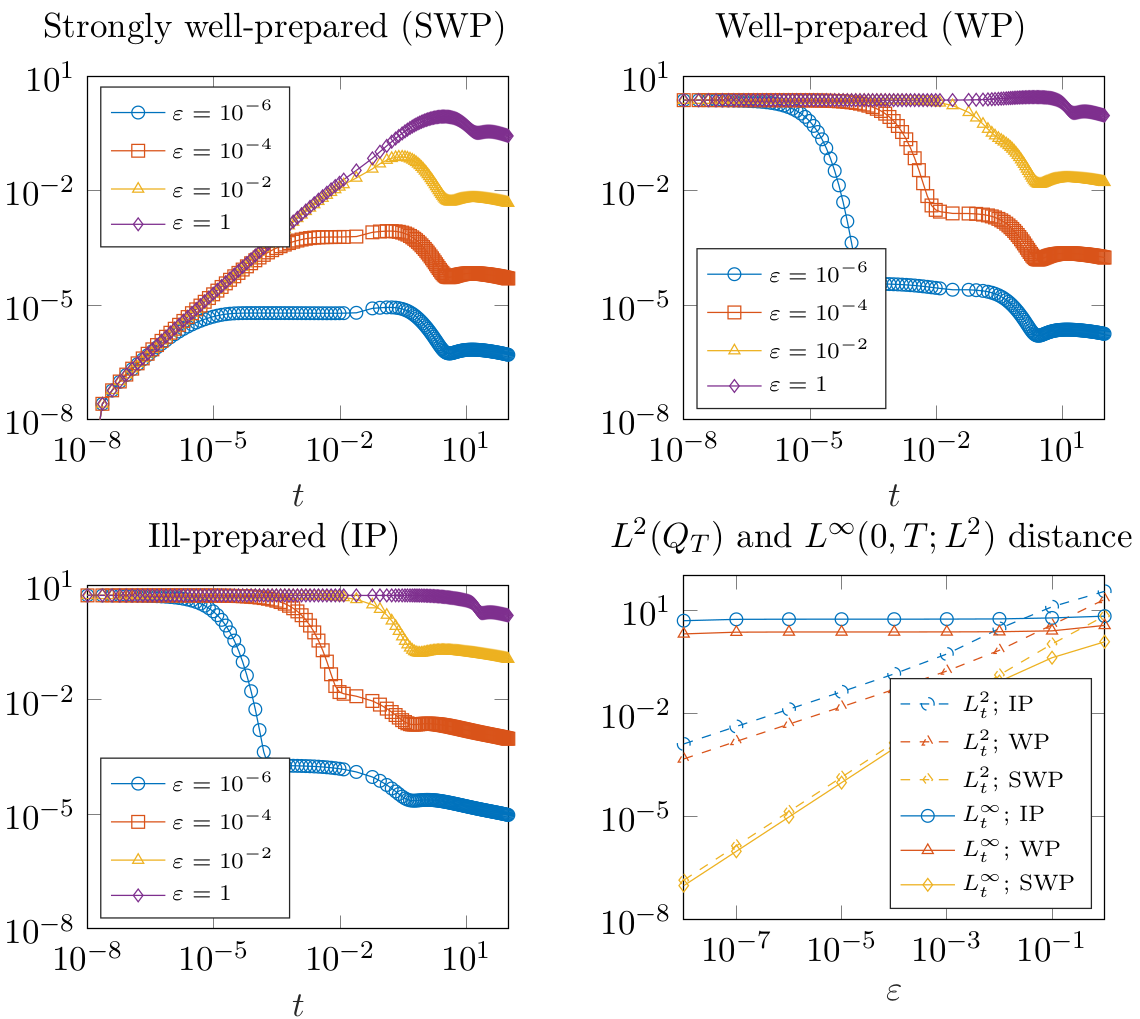}
\caption{Testcase 2. Time evolution of the error (in $\log$-$\log$ scale) between the chemoattractant density $v$ for a given $\eps$ and the quasi-stationary case $\eps=0$. (Top left) $L^2(\Omega)$ distance vs time for the strongly well-prepared (SWP) initial data \eqref{eq:strong_well_prepared_init}, (top right) $L^2(\Omega)$ distance vs time for the well-prepared (WP) initial data \eqref{eq:well_prepared_init} and (bottom left) $L^2(\Omega)$ distance vs time for the ill-prepared (IP) initial data \eqref{eq:ill_prepared_init}. On the bottom right plot we show the corresponding $L^2(Q_T)$ and $L^\infty(0,T; L^2(\Omega))$ errors with respect to $\eps$ for the three initial data.}\label{fig:APerrors}
\end{figure}

\subsection{Testcase 3. Instabilities in 2D}

In this testcase, we investigate numerically the linear stability of homogeneous steady states  in a bi-dimensional domain. Given $\mu>0$, consider  the homogeneous steady state 
\begin{equation}\label{eq:hom_steady}
(u_*,v_*) = (\mu, \mu/\beta).
\end{equation}
A straightforward adaptation of the linear stability analysis performed in 
\cite[proof of Theorem 3.1]{DKTY2019} and \cite[Theorem 3.2]{ChKi24} yields that \eqref{eq:hom_steady} is asymptotically linearly stable if and only if 
\begin{equation}\label{eq:lin_stab}
\mu<\mu_c^s \quad\text{with}\quad \mu_c^s = \beta + \lambda_1(\Omega)\delta,
\end{equation} 
where $\lambda_1(\Omega)>0$ is the first non-zero eigenvalue of the Laplacian $(-\Delta)$ in $\Omega$ with homogeneous Neumann boundary conditions.  This linear stability analysis can be transposed in the fully discrete setting. This yields, independently of $\Delta t>0$, the same stability condition as \eqref{eq:lin_stab} where $\lambda_1(\Omega)$ is replaced with the first non-zero eigenvalue of the finite volume Laplacian $\mathbb{L}$ associated with the mesh, defined by
\begin{equation}\label{eq:FV_Lap}
\left\{
\begin{array}{ll}
\mathbb{L}_{K,K} =\sum_{\sigma \in \E_{{\rm int},K}} \dfrac{\tau_\sigma}{\m(K)},  &\forall K \in \T,\\
\mathbb{L}_{K,L} = -\dfrac{\tau_\sigma}{\m(K)},  &\forall K \in \T, \forall L \in \Ne(K), \, \sigma=K|L,	 \\
\mathbb{L}_{K,L} = 0,  &\forall K \in \T, \forall L \notin \Ne(K).
\end{array}
\right.
\end{equation}

The space domain $\Omega$ is taken as a (polygonal approximation of) a circle of radius $R = 1$ centered at the origin. In a disk domain of radius $R$ the principal eigenvalue of the Laplacian is $\lambda_1(\Omega) = \left(j_{1,1}'/R\right)^2$ with $j_{1,1}'\approx 1.8412$ the first non-zero root of $J_1'$, with $J_1$ the first Bessel function. The parameters are taken to be $R=\delta=\eps=1$ and $\beta = \lambda_1(\Omega) / R^2\approx3.39$. This yields $\mu_c^s = 2\beta \approx6.78$.

The initial data is taken to be a perturbation of $(u_*,v_*)$. In polar coordinates with origin  at the center of the disk, we set either a perturbation with the first non-trivial mode of the Neumann Laplacian
\begin{equation}\label{eq:initJ1}
u^0(r,\theta) = \mu,\quad  v^0(r,\theta) = \mu\left(1 + \frac{\cos(\theta)}{10}J_1\left(\sqrt{\lambda_1(\Omega)}\frac{r}{R}\right)\right),
\end{equation}
or the third non-trivial mode of the Neumann Laplacian (which is a radially symmetric function)
\begin{equation}\label{eq:initJ3}
u^0(r,\theta) = \mu,\quad  v^0(r,\theta) = \mu\left(1 + \frac{1}{10}J_0\left(\sqrt{\lambda_3(\Omega)}\frac{r}{R}\right)\right).
\end{equation}
We test several values of $\mu$ that are 
\[
\text{(a)}\ \mu = 0.9\mu_c^s,\qquad \text{(b)}\ \mu = 1.1\mu_c^s,\qquad \text{(c)}\ \mu = 4\mu_c^s.
\]
For the initial data \eqref{eq:initJ1}, linear instability is expected in cases \text{(b)} and \text{(c)} since $\mu>\mu_c^s$. For the second initial data \eqref{eq:initJ3}, linear instability of the corresponding third eigenmode of the Neumann Laplacian is expected only in case \text{(c)} where one has  $4\mu_c^s>\beta + \lambda_3(\Omega)\delta$ with our parameters. Here $\lambda_3(\Omega)= \left(j_{0,1}'/R\right)^2$ 
with  $j_{0,1}'\approx 3.8317$ the first non-zero root of $J_0'$.

The domain is meshed with $11790$ triangles, the time step is taken to be $\Delta t = 0.1$ and the final time of simulation is $T=50000$. Let us precise that the mesh does not preserve any sort of radial symmetry of the domain.

On Figure~\ref{fig:snapshots_J1c} and Figure~\ref{fig:snapshots_J3c}, we report snapshots of the solution at different times for the initial data \eqref{eq:initJ1} and \eqref{eq:initJ3} in the case (c) $\mu = 4\mu_c^s$, so that both cases are linearly unstable with a different perturbation. As expected, the solution deviates largely from the homogeneous equilibrium and tends to localize in parts of the domain.

\begin{figure}[!ht]
	\centering
	\includegraphics[width = .73\textwidth]{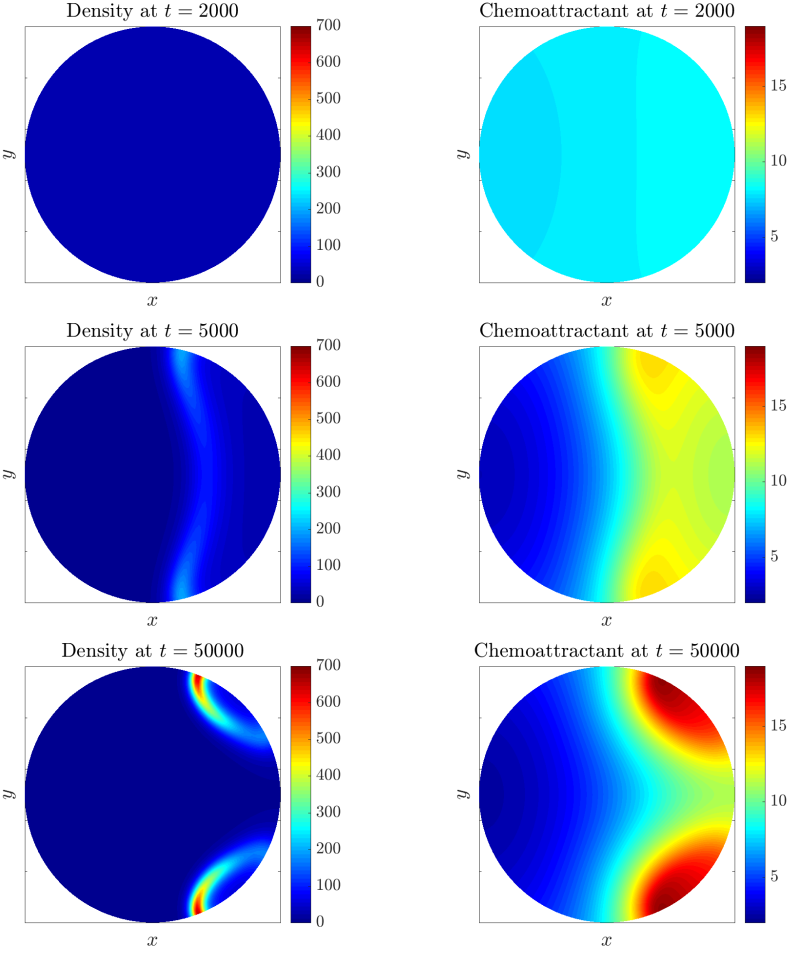}
    \caption{Testcase 3. Snapshots of the solution density $u$ (left) and chemoattractant $v$ (right) at different times $t=2000, 5000, 50000$ (top to bottom). Initial data is \eqref{eq:initJ1} with (c) $\mu = 4\mu_c^s$.}
    \label{fig:snapshots_J1c}
\end{figure}

\begin{figure}[!ht]	
	\centering
\includegraphics[width = .73\textwidth]{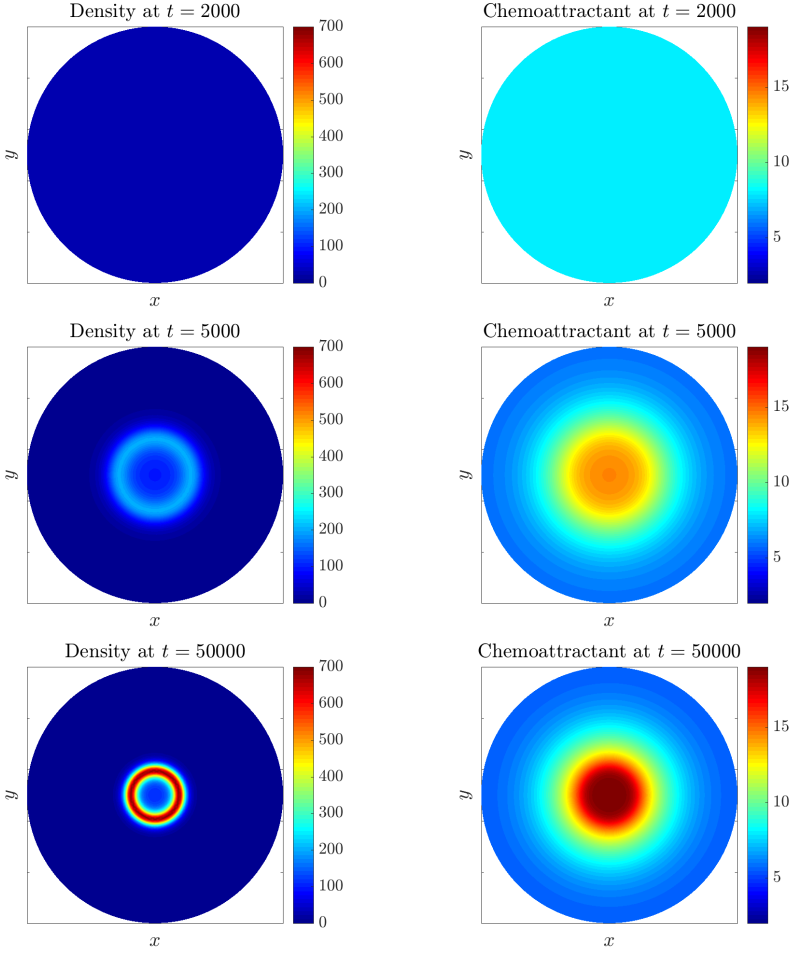}
    \caption{Testcase 3. Snapshots of the solution density $u$ (left) and chemoattractant $v$ (right) at different times $t=2000, 5000, 50000$ (top to bottom). Initial data is \eqref{eq:initJ3} with (c) $\mu = 4\mu_c^s$. }
    \label{fig:snapshots_J3c}
\end{figure}

On Figure~\ref{fig:Linf_and_entropy} we show the evolution of global quantities. In the stable case \eqref{eq:initJ1} (a), we plot the time evolution of the (discrete counterpart of the) relative entropy with respect to the homogeneous steady state
\[
\mathcal{H}((u,v); (u_*,v_*)) = \int_\Omega \left[(u-u_*) \log \left(\frac{u}{u_*}\right) + \frac{\beta}{2} |v-v_*|^2 -(u-u_*)(v-v_*) + \frac{\delta}{2} \left|\nabla v \right|^2 \right] \, \dist x.
\]
We observe exponential decay of this quantity which illustrates the exponential asymptotic stability of the homogeneous steady state for this set of parameters. In the unstable cases, namely \eqref{eq:initJ1} (b), \eqref{eq:initJ1} (c) and \eqref{eq:initJ3} (c), we show the time evolution of the $L^\infty$ norms with respect to time. The $L^\infty$ norm of $u$ is monotonically increasing for all three simulations.

In fact, we recall that the literature \cite{FuJi20,BLT21,FuJi21a,JiWa20} predicts a dichotomy between uniformly bounded solutions and {possible} infinite-time blow-up. {However, from our observations it is challenging in practice to judge whether a blow-up phenomenon is actually detected for a given set of parameters. Indeed, in regimes where the homogeneous steady state is unstable, both below and above the critical mass threshold, we observe aggregation patterns that appear qualitatively similar. At the discrete level, we emphasize that the growth of $\|u\|_{L^\infty}$ is constrained by positivity and mass conservation, and on coarse meshes this quantity may saturate. Moreover, even in parameter regimes where blow-up is theoretically expected, the growth of $\|u\|_{L^\infty}$ is extremely slow, in line with the infinite-time blow-up scenario, and hence difficult to distinguish from a merely bounded but increasing solution. A thorough numerical investigation of this phenomenon, including refined meshes and adaptive strategies, will be the subject of forthcoming work.}

\begin{figure}[!ht]
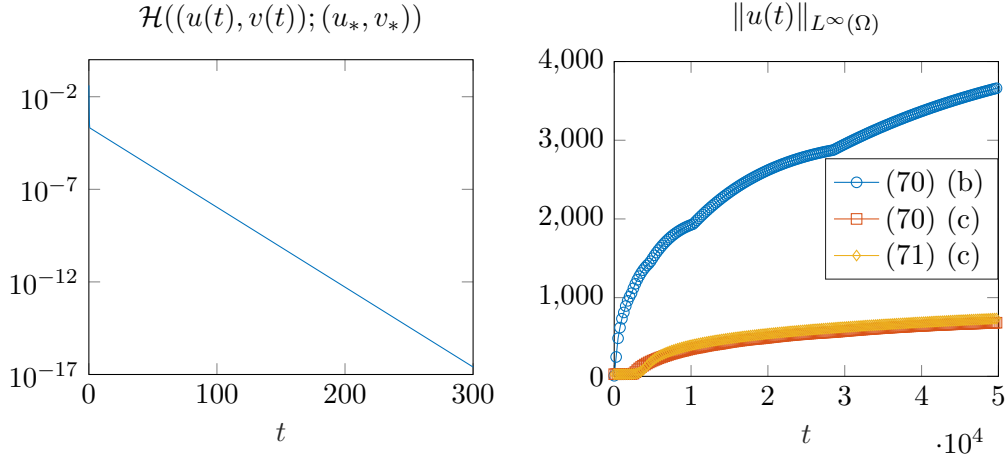

\begin{tabular}{cc}
\begin{minipage}{.43\textwidth}  \input{Entropy_L2_dt0.1_delt1_eps1_mu6.102_mesh6.tex}\end{minipage}
&
\begin{minipage}{.43\textwidth} 
\input{NormsU_L2_dt0.1_delt1_eps1_mu27.1201_mesh6}
\end{minipage}
\end{tabular}
	\caption{Testcase 3.(Left) In the stable case \eqref{eq:initJ1} (a): time evolution of the relative entropy between the solution and the homogeneous steady state. (Right) In the unstable cases \eqref{eq:initJ1} (b), \eqref{eq:initJ1} (c) and \eqref{eq:initJ3} (c): $L^\infty$ norm of $u$  versus time.}
    \label{fig:Linf_and_entropy}
\end{figure}

\subsection{Testcase 4. Aggregation in large domain}\label{sec:DKTY_testcase}

We reproduce here the testcase of \cite[Section 4.2.2]{DKTY2019} in order to illustrate the use of the scheme for a non-exponential cell motility. The latter is taken to be 
\[
\gamma(v) = \frac{1}{c+v^k}\,,
\]
with $c=1$ and $k=2$. The space domain $\Omega$ is taken as a square with edge of length $R = 10$. The parameters are taken to be $\eps=\beta =1$ and $\delta = 3/(5\times14.6819)$ The initial data is taken to be a perturbation of the homogeneous steady state $(u_*,v_*)$ defined in \eqref{eq:hom_steady} with $\mu=4$
\[
u^0(x,y) = \mu,\quad  v^0(x,y) = \mu\left(0.5 + X(x,y)\right),
\]
where $0<X(x,y)<1$ is the realisation of a random variable with uniform distribution. The value of the parameter $\delta$ has been chosen in order for the homogeneous steady state to be linearly unstable as in the previous section. The domain is meshed with $14788$ triangles, the time step is taken to be $\Delta t = 0.1$ and the final time of simulation is $T=20000$. Let us point out that compared to  \cite[Section 4.2.2]{DKTY2019}, we can handle a non-uniform mesh and a coarser time step. For the latter point, we recall that our semi-implicit scheme is unconditionally stable, unlike the explicit scheme used in \cite[Section 4.2.2]{DKTY2019}. On Figure~\ref{fig:snapshots_DKTY2019}, we show the evolution of the density with respect to time. As pointed out in \cite[Section 4.2.2]{DKTY2019}, we observe aggregation  in spots which tend to merge in time. 

\begin{figure}[!ht]
	\centering
	\includegraphics[width = .8\textwidth]{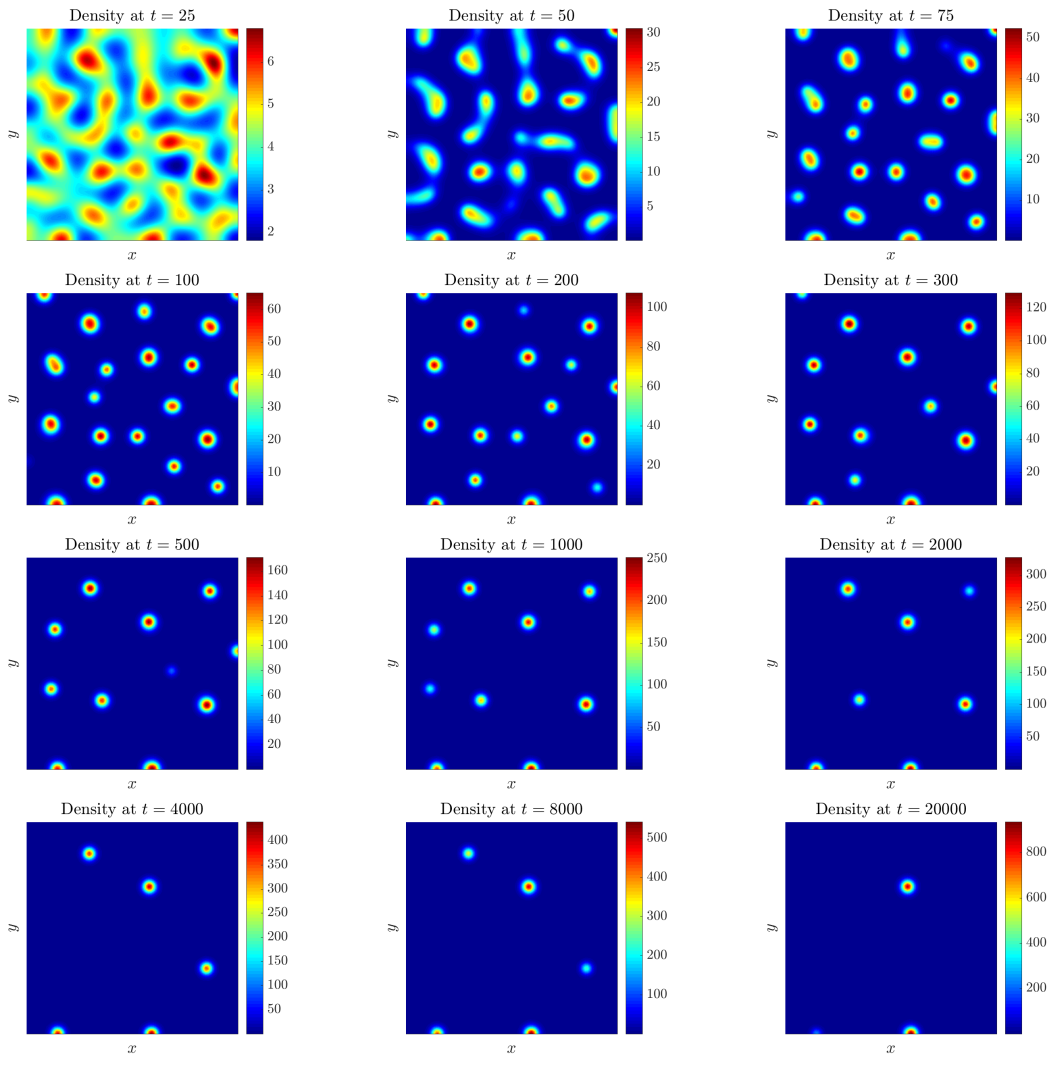}
    \caption{Testcase 4. Snapshots of the density at times (left to right, top to bottom)  $t=25$, $50$, $75$, $100$, $200$, $300$, $500$, $1000$, $2000$, $4000$, $8000$, $20000$. The color scale varies between each subfigure.}
    \label{fig:snapshots_DKTY2019}
\end{figure}

\begin{appendix}

\section{Auxiliary results}\label{app.A}

\begin{lemma}\label{lem.con}
Let $\T$ be an admissible mesh of $\Omega$ satisfying~\eqref{2.dd} and $w \in H^1(\Omega)$. Then,  the vector $(w_K)_{K \in \T} \in \Hi_\T$ with $w_K = \m(K)^{-1}\int_K w(x) \, \dist x$ satisfies
\begin{align*}
\sum_{K \in \T} \m(K) |w_K|^2 \leq \int_\Omega |w(x)|^2 \, \dist x.
\end{align*}
\end{lemma}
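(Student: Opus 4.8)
\textbf{Proof plan for Lemma~\ref{lem.con}.}

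The statement says that the $L^2$-projection onto cell averages is a contraction in $L^2$; equivalently, the piecewise-constant projection $\pi_\T w = \sum_K w_K \mathbf{1}_K$ satisfies $\|\pi_\T w\|_{L^2(\Omega)} \le \|w\|_{L^2(\Omega)}$. The plan is to prove this by a cellwise application of the Cauchy--Schwarz (or equivalently Jensen) inequality, so the mesh regularity assumption~\eqref{2.dd} is in fact not even needed---only that $\T$ is a partition of $\Omega$ into cells of positive measure.

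First I would fix $K \in \T$ and write, using the definition $w_K = \m(K)^{-1}\int_K w\,\dist x$ and the Cauchy--Schwarz inequality on $L^2(K)$,
\begin{align*}
\m(K)\,|w_K|^2 = \m(K)\left(\frac{1}{\m(K)}\int_K w(x)\,\dist x\right)^2 = \frac{1}{\m(K)}\left(\int_K w(x)\cdot 1\,\dist x\right)^2 \le \frac{1}{\m(K)}\cdot \m(K)\int_K |w(x)|^2\,\dist x,
\end{align*}
so that $\m(K)\,|w_K|^2 \le \int_K |w(x)|^2\,\dist x$. Then I would sum this inequality over all $K \in \T$ and use that the cells form a partition of $\Omega$ (up to a set of measure zero), i.e. $\sum_{K\in\T}\int_K |w|^2\,\dist x = \int_\Omega |w|^2\,\dist x$, to conclude
\begin{align*}
\sum_{K\in\T}\m(K)\,|w_K|^2 \le \sum_{K\in\T}\int_K |w(x)|^2\,\dist x = \int_\Omega |w(x)|^2\,\dist x.
\end{align*}

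There is essentially no obstacle here: the only mild point to be careful about is that $w \in H^1(\Omega) \subset L^2(\Omega)$, so each $w_K$ is well-defined and finite and the cellwise integrals $\int_K|w|^2$ are finite; the $H^1$ regularity in the hypothesis is stronger than needed (only $L^2$ is used) but is stated because that is the context in which the lemma is invoked. I would keep the write-up to the two displays above plus a sentence noting that the cells partition $\Omega$.
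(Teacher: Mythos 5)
Your proof is correct and follows essentially the same route as the paper, which also argues cellwise (via Jensen's inequality, equivalent to your Cauchy--Schwarz step for the square) and sums over the partition. Your observation that neither the mesh regularity~\eqref{2.dd} nor the full $H^1$ regularity is needed is accurate.
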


\begin{proof}
Thanks to Jensen's inequality we have
\begin{align*}
\sum_{K \in \T} \m(K) |w_K|^2 - \int_\Omega |w(x)|^2 \, \dist x &= \sum_{K\in\T} \left(\frac{1}{\m(K)} \left(\int_K w(x)\,\dist x\right)^2 - \int_K |w(x)|^2 \, \dist x \right)\\
&= \sum_{K\in\T}\left( \m(K) \left(\int_K w(x)\,\frac{\dist x}{\m(K)}\right)^2 - \int_K |w(x)|^2 \, \dist x \right)\\
&\leq \sum_{K\in\T}\left( \m(K) \int_K |w(x)|^2\,\frac{\dist x}{\m(K)} - \int_K |w(x)|^2 \, \dist x \right) = 0.
\end{align*}
This finishes the proof of Lemma~\ref{lem.con}.
\end{proof}

\begin{lemma}[see Lemma 3.4 in~\cite{EGH00}]\label{lem.H1con}
Let $\T$ be an admissible mesh of $\Omega$ satisfying~\eqref{2.dd} and $w \in H^1(\Omega)$. Then, there exists a constant $C>0$ only depending on $\Omega$ and $\zeta$ such that the vector $(w_K)_{K \in \T} \in \Hi_\T$ with $w_K = \m(K)^{-1}\int_K w(x) \, \dist x$ satisfies
\begin{align*}
\sum_{\sigma \in \E} \tau_\sigma (\Dd_{\sigma} w)^2 \leq C \|w\|_{H^1(\Omega)}^2.
\end{align*}
\end{lemma}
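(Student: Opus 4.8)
The plan is to reduce the estimate to smooth functions and then invoke the classical interpolation bound of~\cite{EGH00}, the reduction being the only step that requires a short argument of our own.

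\emph{Reduction to smooth functions.} Fix the admissible mesh $\T$. By Lemma~\ref{lem.con}, the linear map $w\in L^2(\Omega)\mapsto(w_K)_{K\in\T}\in\Hi_\T$ with $w_K=\m(K)^{-1}\int_K w$ has operator norm at most $1$ when $\Hi_\T$ carries the $L^2(\Omega)$ norm; since $\Hi_\T$ is finite-dimensional, the seminorm $v\mapsto\big(\sum_{\sigma\in\E}\tau_\sigma(\Dd_\sigma v)^2\big)^{1/2}$ on $\Hi_\T$ is continuous. Hence $w\mapsto\big(\sum_{\sigma\in\E}\tau_\sigma(\Dd_\sigma w)^2\big)^{1/2}$ is a continuous seminorm on $L^2(\Omega)$, a fortiori on $H^1(\Omega)$. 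As $C^\infty(\overline\Omega)$ is dense in $H^1(\Omega)$ ($\Omega$ being a bounded polygonal domain) and both sides of the claimed inequality depend continuously on $w$ in the $H^1(\Omega)$ topology, it suffices to establish the bound for $w\in C^\infty(\overline\Omega)$.

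\emph{The smooth case.} Boundary edges do not contribute, since $w_{K,\sigma}=w_K$ there and so $\Dd_\sigma w=0$. Fix an interior edge $\sigma=K|L\in\Eint$ and write the difference of cell averages as
\begin{align*}
w_L-w_K=\frac{1}{\m(K)\m(L)}\int_K\int_L\int_0^1\nabla w\big(x+s(y-x)\big)\cdot(y-x)\,\dist s\,\dist y\,\dist x,
\end{align*}
the inner integral being the fundamental theorem of calculus along the segment from $x\in K$ to $y\in L$ (one splits the path at its crossing point with $\sigma$ if a control volume fails to be convex). Applying the Cauchy--Schwarz/Jensen inequality, changing variables $s\mapsto x+s(y-x)$, and then using the mesh regularity~\eqref{2.dd} to control the geometric factors ($|y-x|$ against $\dist_\sigma$, and $\m(K),\m(L)$ against $\m(\sigma)\,\dist_\sigma$ via the half-diamonds $T_{K,\sigma},T_{L,\sigma}$), one obtains a constant $C$ depending only on $\zeta$ and $\di$ such that
\begin{align*}
\tau_\sigma(\Dd_\sigma w)^2=\frac{\m(\sigma)}{\dist_\sigma}\,|w_L-w_K|^2\le C\int_{\omega_\sigma}|\nabla w(z)|^2\,\dist z,
\end{align*}
where $\omega_\sigma$ is a neighbourhood of $\overline K\cup\overline L$ assembled from the half-diamonds of $K$ and $L$; this is exactly the computation in~\cite[proof of Lemma 3.4]{EGH00}. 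Summing over $\sigma\in\Eint$, the regularity~\eqref{2.dd} bounds the number of neighbours of each control volume and the ratios of sizes of adjacent volumes, so each point of $\Omega$ lies in at most $C(\zeta,\di)$ of the sets $\omega_\sigma$; hence $\sum_{\sigma\in\Eint}\int_{\omega_\sigma}|\nabla w|^2\le C(\zeta,\di)\int_\Omega|\nabla w|^2$, and combining with the previous display and the density argument gives $\sum_{\sigma\in\E}\tau_\sigma(\Dd_\sigma w)^2\le C\,\|w\|_{H^1(\Omega)}^2$ with $C=C(\Omega,\zeta)$.

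\emph{Main obstacle.} The only delicate point is the geometric bookkeeping in the last step: localizing the gradient integral and bounding the finite-overlap number of the $\omega_\sigma$ purely in terms of $\zeta$ (and $\di$), together with handling possibly non-convex control volumes in the line-integral representation. Since this is precisely the content of~\cite[Lemma 3.4]{EGH00}, we may simply refer to that work, the reduction to $C^\infty(\overline\Omega)$ in the first paragraph being the only addition needed to accommodate our averaging convention for $(w_K)$.
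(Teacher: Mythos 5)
Your proposal is correct and takes essentially the same route as the paper: the paper offers no proof of this lemma beyond the citation of \cite[Lemma 3.4]{EGH00}, which is exactly the reference your argument ultimately rests on for the geometric estimates. Your additional density reduction to $w\in C^\infty(\overline\Omega)$ and the sketch of the segment-averaging computation are sound and consistent with the classical argument, so nothing is missing relative to the paper's treatment.
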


\begin{lemma}[see Proposition 2.61 in~\cite{FoLe07}]\label{lem.convugamm}
Let $(X,\mathcal{M},\mu)$ be a measurable space with $\mu$ finite. If $u_n \rightharpoonup u$  weakly in $L^1(X)$, $v_n \to v$ pointwise for $\mu$ a.e. $x \in X$, and $\sup_n\|v_n\|_{L^\infty(X)}<\infty$, then $u_n v_n \rightharpoonup u v$ weakly in $L^1(X)$.
\end{lemma}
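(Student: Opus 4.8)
The plan is to verify weak $L^1$ convergence by testing against an arbitrary $\varphi \in L^\infty(X)$, which represents a generic element of $(L^1)'=L^\infty$, and to show that $\int_X u_n v_n \varphi \, \dist \mu \to \int_X uv\varphi\,\dist \mu$. First I would record two preliminary facts. Since $v_n \to v$ pointwise $\mu$-a.e. with $\sup_n\|v_n\|_{L^\infty}\le M$, the limit inherits $\|v\|_{L^\infty}\le M$, so $v\varphi\in L^\infty(X)$ and both $uv$ and each $u_nv_n$ lie in $L^1(X)$. Moreover a weakly convergent sequence in $L^1$ is norm-bounded, so $\sup_n\|u_n\|_{L^1}\le C$ for some finite $C$.

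The natural decomposition is
\begin{equation*}
\int_X (u_n v_n - uv)\varphi \, \dist \mu = \int_X u_n(v_n-v)\varphi\,\dist \mu + \int_X (u_n-u)\,v\varphi\,\dist \mu.
\end{equation*}
The second integral tends to $0$ directly from the weak convergence $u_n\rightharpoonup u$ in $L^1$, because $v\varphi$ is a \emph{fixed} $L^\infty$ test function. The entire difficulty is therefore concentrated in the first integral, where $u_n$ converges only weakly, so that no termwise dominated-convergence argument applies to the product $u_n(v_n-v)$.

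To control the first term I would invoke the Dunford--Pettis theorem: the weak convergence of $(u_n)$ in $L^1$ makes this sequence uniformly integrable, so for every $\eta>0$ there exists $\delta>0$ such that $\sup_n\int_A|u_n|\,\dist \mu<\eta$ whenever $\mu(A)<\delta$ (the finiteness of $\mu$ removes any condition at infinity). Next, since $\mu(X)<\infty$ and $v_n\to v$ a.e., Egorov's theorem furnishes a set $E$ with $\mu(X\setminus E)<\delta$ on which $v_n\to v$ uniformly. Splitting $\int_X u_n(v_n-v)\varphi\,\dist \mu$ over $E$ and $X\setminus E$, the contribution over $E$ is bounded by $\|\varphi\|_{L^\infty}\,\|v_n-v\|_{L^\infty(E)}\,C$, which tends to $0$ as $n\to\infty$, while the contribution over $X\setminus E$ is bounded by $2M\|\varphi\|_{L^\infty}\sup_n\int_{X\setminus E}|u_n|\,\dist \mu\le 2M\|\varphi\|_{L^\infty}\,\eta$ uniformly in $n$. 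Taking $\limsup_{n\to\infty}$ and then letting $\eta\to0$ forces the first integral to vanish, which concludes the argument.

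The only genuinely delicate point is the passage involving the weakly (not strongly) convergent factor $u_n$ against the oscillating factor $v_n-v$; the combination of equi-integrability (via Dunford--Pettis) with the uniform convergence on a large set (via Egorov) is precisely what reconciles these two modes of convergence, and everything else is routine bookkeeping.
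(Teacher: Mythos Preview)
Your proof is correct. Note, however, that the paper does not actually supply a proof of this lemma: it is stated as an auxiliary result with a reference to Proposition~2.61 in \cite{FoLe07}, so there is no in-paper argument to compare against. Your approach---decomposing the integral, handling the fixed-multiplier term by weak convergence, and treating the oscillating term via Dunford--Pettis equi-integrability combined with Egorov's theorem---is the standard route and is exactly what one finds in the cited reference.
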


\section{On the continuity of the discrete projection} \label{sec.countex}

\begin{proposition}[Continuity of the projection, counter-example]
Let $\Omega=(-1,1)$ and, for $m\ge 2 $, let $\T$ be given by the regular grid, $\T=\left\{\left(\frac{k}{m},\frac{k+1}{m}\right), \,\, - m \le k \le m-1 \right\}$. Then,
\begin{equation}
    \sup_{u \in (H^1)'(\Omega)} \frac{N(\pi_\T u)}{\|u\|_{(H^1)'(\Omega)}} = \infty.
\end{equation}
\end{proposition}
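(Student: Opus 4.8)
The plan is to fix the mesh --- i.e.\ fix $m\ge 2$ --- and exhibit an explicit sequence $(u_n)_{n>m}$ in $L^\infty(\Omega)\subset (H^1)'(\Omega)$ with $\langle u_n\rangle=0$, such that $\pi_\T u_n$ is one and the same nonzero element of $\widetilde{\Hi}_\T$ for every $n$, while $\|u_n\|_{(H^1)'(\Omega)}\to 0$; this forces $N(\pi_\T u_n)/\|u_n\|_{(H^1)'(\Omega)}\to\infty$, hence the supremum to be infinite. Let $I_{-1}=(-1/m,0)$ and $I_0=(0,1/m)$ be the two cells sharing the interior vertex $x=0$, and for each integer $n>m$ set
\begin{equation*}
u_n=n\,\mathbf{1}_{(0,1/n)}-n\,\mathbf{1}_{(-1/n,0)}.
\end{equation*}
Since $1/n<1/m$, the support of $u_n$ lies in $I_{-1}\cup\{0\}\cup I_0$ and meets no cell interface other than $x=0$, and clearly $\int_\Omega u_n=0$.

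Step one: compute $\pi_\T u_n$. By construction $\int_{I_0}u_n=1$, $\int_{I_{-1}}u_n=-1$, and $\int_K u_n=0$ for every other $K\in\T$, so the discrete projection \eqref{4.proj} satisfies $\pi_\T u_n=\bar w$, where $\bar w\in\widetilde{\Hi}_\T$ is the fixed function (independent of $n$) given by $\bar w_{I_0}=m$, $\bar w_{I_{-1}}=-m$, $\bar w_K=0$ otherwise. As $\bar w$ is a nonzero element of $\widetilde{\Hi}_\T$ and $N$ is a norm on that space, $N(\bar w)>0$ is a constant depending only on $m$; in particular $N(\pi_\T u_n)=N(\bar w)$ does not depend on $n$.

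Step two: show $\|u_n\|_{(H^1)'(\Omega)}\to 0$. Let $\psi_n=(-\Delta)^{-1}u_n$ (homogeneous Neumann conditions, zero mean), so that $\|u_n\|_{(H^1)'(\Omega)}=\|\psi_n'\|_{L^2(\Omega)}$. Integrating once, using $\psi_n'(-1)=0$, gives $\psi_n'(x)=-\int_{-1}^x u_n$, which is the ``tent'' function equal to $n(x+1/n)$ on $(-1/n,0)$, to $1-nx$ on $(0,1/n)$ and to $0$ elsewhere; hence $\|\psi_n'\|_{L^2(\Omega)}^2=2\int_0^{1/n}(1-nx)^2\,\dist x=\tfrac{2}{3n}\to 0$. (Equivalently, one may bound $\langle u_n,\varphi\rangle=n\int_0^{1/n}\bigl(\varphi(x)-\varphi(x-1/n)\bigr)\,\dist x$ directly by $n^{-1/2}\|\varphi'\|_{L^2(\Omega)}$ using Cauchy--Schwarz.) Combining the two steps,
\begin{equation*}
\frac{N(\pi_\T u_n)}{\|u_n\|_{(H^1)'(\Omega)}}=N(\bar w)\sqrt{\tfrac{3n}{2}}\xrightarrow[n\to\infty]{}\infty,
\end{equation*}
which proves the proposition.

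There is no deep obstacle here; the point to handle carefully is the two-scale mechanism behind the counterexample. The ``dipole'' $u_n$ becomes negligible in $(H^1)'(\Omega)$ as $n\to\infty$ because its positive and negative halves cancel against any $H^1$ test function, yet the fixed mesh vertex $x=0$ splits its support exactly in the middle, so no cancellation takes place at the cell-average level and $\pi_\T u_n$ remains a fixed nonzero vector. The only care needed is to pick $n>m$ so that $\operatorname{supp}u_n$ touches no cell interface besides $x=0$; all the remaining computations are elementary one-dimensional ones.
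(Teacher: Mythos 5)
Your proof is correct and takes essentially the same route as the paper: a dipole concentrated at the interior mesh vertex $x=0$ whose $(H^1)'(\Omega)$ norm tends to zero while its cell-average projection is one fixed nonzero element of $\widetilde{\Hi}_\T$, so the ratio blows up for fixed $m$. The only (harmless) difference is that the paper uses the Dirac dipole $\delta_{1/r}-\delta_{-1/r}$ together with an explicit discrete test function to get the lower bound $N(\pi_\T u)\ge 1/\sqrt{m}$, whereas you use the bounded regularization $n\,\mathbf{1}_{(0,1/n)}-n\,\mathbf{1}_{(-1/n,0)}$, for which the projection \eqref{4.proj} applies in its literal $L^2$ form and mere positivity of the norm $N$ suffices; your computations ($\pi_\T u_n=m\,\mathbf{1}_{(0,1/m)}-m\,\mathbf{1}_{(-1/m,0)}$ and $\|u_n\|_{(H^1)'(\Omega)}^2=2/(3n)$) are correct.
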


\begin{proof}
Let $r\ge m$ and $u=\delta_{\frac1r}-\delta_{-\frac1r}$. Clearly $\langle u \rangle =0$ and for any $\varphi\in H^1(\Omega)$, we have
\begin{equation*}
    \langle u , \varphi \rangle = \varphi\left(\frac1r\right)-\varphi\left(-\frac1r\right)=\int_{-\frac1r}^\frac1r \varphi'\le \frac{\sqrt{2}}{\sqrt{r}} \|\varphi'\|_2,
\end{equation*}
so that $u\in (H^1)'(\Omega)$ with $\|u\|_{(H^1)'(\Omega)}\le \frac{\sqrt{2}}{\sqrt{r}}$.

On the other hand, when projecting $u$ on the grid $\T$, using that $r\ge m$ we get that $\pi_\T u = m \mathbf{1}_{(0,\frac1m)}-m \mathbf{1}_{(\frac{-1}m,0)}$. We then pick the test function $\theta = \frac{1}{2\sqrt{m}}\left( \mathbf{1}_{(0,1)}-\mathbf{1}_{(-1,0)}\right)$ and verify that $\theta \in \widetilde{\Hi}_T$ with $|\theta|_{1,2,\T} = 1$, so that, using Lemma~\ref{lem.dual}, we get
\begin{equation*}
    N(\pi_\T u) \ge \left|\int_{-1}^1 \pi_\T u \, \theta \, \dist x \right|= \frac{1}{\sqrt{m}}.
\end{equation*}
Taking $r=A m^2$ with $A>1$, we obtain
\begin{equation*}
    \frac{N(\pi_\T u)}{\|u\|_{(H^1)'}} \ge \sqrt{\frac{Am}{2}},
\end{equation*}
and we conclude by letting $A$ tend to infinity.
\end{proof}

\end{appendix}
\section*{Acknowledgements} M.H. acknowledges support from the LabEx CEMPI (ANR-11-LABX0007) and the CDP C2EMPI, together with the French State under the France-2030 programme, the University of Lille, the Initiative of Excellence of the University of Lille, the European Metropolis of Lille for their funding and support of the R-CDP-24-004-C2EMPI project.  A.Z. acknowledges support from the Research Department of UTC as part of the Young Lecturers Call for Proposals. The premises of this work were discussed during the workshop MoDiS 2023: Modelling Diffusive Systems at ICMS, Edinburgh.

\bibliographystyle{plain}
\bibliography{bibli}

\begin{thebibliography}{10}

\bibitem{ssAgPi16}
Oscar Agudelo and Angela Pistoia.
\newblock Boundary concentration phenomena for the higher-dimensional
  {Keller}-{Segel} system.
\newblock {\em Calc. Var. Partial Differ. Equ.}, 55(6):31, 2016.
\newblock Id/No 132.

\bibitem{AB17}
Mohammed Akhmouch and Mohammed Benzakour~Amine.
\newblock A time semi-exponentially fitted scheme for chemotaxis-growth models.
\newblock {\em Calcolo}, 54(2):609--641, 2017.

\bibitem{ABS11}
Boris Andreianov, Mostafa Bendahmane, and Mazen Saad.
\newblock Finite volume methods for degenerate chemotaxis model.
\newblock {\em J. Comput. Appl. Math.}, 235(14):4015--4031, 2011.

\bibitem{AT21}
Gurusamy Arumugam and Jagmohan Tyagi.
\newblock Keller-{Segel} chemotaxis models: a review.
\newblock {\em Acta Appl. Math.}, 171:82, 2021.
\newblock Id/No 6.

\bibitem{BCF15}
Marianne Bessemoulin-Chatard, Claire Chainais-Hillairet, and Francis Filbet.
\newblock On discrete functional inequalities for some finite volume schemes.
\newblock {\em IMA J. Numer. Anal.}, 35(3):1125--1149, 2015.

\bibitem{BCJ14}
Marianne Bessemoulin-Chatard and Ansgar J{\"u}ngel.
\newblock A finite volume scheme for a {Keller}-{Segel} model with additional
  cross-diffusion.
\newblock {\em IMA J. Numer. Anal.}, 34(1):96--122, 2014.

\bibitem{ssBil98}
Piotr Biler.
\newblock Local and global solvability of some parabolic systems modelling
  chemotaxis.
\newblock {\em Adv. Math. Sci. Appl.}, 8(2):715--743, 1998.

\bibitem{BCC08}
Adrien Blanchet, Vincent Calvez, and Jos{\'e}~A. Carrillo.
\newblock Convergence of the mass-transport steepest descent scheme for the
  subcritical {Patlak}-{Keller}-{Segel} model.
\newblock {\em SIAM J. Numer. Anal.}, 46(2):691--721, 2008.

\bibitem{ssBCF20}
Denis Bonheure, Jean-Baptiste Casteras, and Juraj Foldes.
\newblock Singular radial solutions for the {Keller}-{Segel} equation in high
  dimension.
\newblock {\em J. Math. Pures Appl. (9)}, 134:204--254, 2020.

\bibitem{ssBCN17}
Denis Bonheure, Jean-Baptiste Casteras, and Benedetta Noris.
\newblock Multiple positive solutions of the stationary {Keller}-{Segel}
  system.
\newblock {\em Calc. Var. Partial Differ. Equ.}, 56(3):35, 2017.
\newblock Id/No 74.

\bibitem{ssBCR21}
Denis Bonheure, Jean-Baptiste Casteras, and Carlos Rom{\'a}n.
\newblock Unbounded mass radial solutions for the {Keller}-{Segel} equation in
  the disk.
\newblock {\em Calc. Var. Partial Differ. Equ.}, 60(5):30, 2021.
\newblock Id/No 198.

\bibitem{BrPa24}
Maxime Breden and Maxime Payan.
\newblock Computer-assisted proofs for the many steady states of a chemotaxis
  model with local sensing.
\newblock {\em Physica D}, 466:18, 2024.
\newblock Id/No 134221.

\bibitem{BCR05}
C.~J. Budd, R.~Carretero-Gonz{\'a}lez, and R.~D. Russell.
\newblock Precise computations of chemotactic collapse using moving mesh
  methods.
\newblock {\em J. Comput. Phys.}, 202(2):463--487, 2005.

\bibitem{BCW10}
Martin Burger, Jos{\'e}~A. Carrillo, and Marie-Therese Wolfram.
\newblock A mixed finite element method for nonlinear diffusion equations.
\newblock {\em Kinet. Relat. Models}, 3(1):59--83, 2010.

\bibitem{BLT21}
Martin Burger, Philippe Lauren{\c{c}}ot, and Ariane Trescases.
\newblock Delayed blow-up for chemotaxis models with local sensing.
\newblock {\em J. Lond. Math. Soc., II. Ser.}, 103(4):1596--1617, 2021.

\bibitem{ChDr11}
Claire Chainais-Hillairet and J{\'e}r{\^o}me Droniou.
\newblock Finite-volume schemes for noncoercive elliptic problems with
  {Neumann} boundary conditions.
\newblock {\em IMA J. Numer. Anal.}, 31(1):61--85, 2011.

\bibitem{CEHK18}
Alina Chertock, Yekaterina Epshteyn, Hengrui Hu, and Alexander Kurganov.
\newblock High-order positivity-preserving hybrid
  finite-volume-finite-difference methods for chemotaxis systems.
\newblock {\em Adv. Comput. Math.}, 44(1):327--350, 2018.

\bibitem{CK08}
Alina Chertock and Alexander Kurganov.
\newblock A second-order positivity preserving central-upwind scheme for
  chemotaxis and haptotaxis models.
\newblock {\em Numer. Math.}, 111(2):169--205, 2008.

\bibitem{ChKi24}
Kyunghan Choi and Yong-Jung Kim.
\newblock Chemotactic cell aggregation viewed as instability and phase
  separation.
\newblock {\em Nonlinear Anal., Real World Appl.}, 80:15, 2024.
\newblock Id/No 104147.

\bibitem{ssPPV16}
Manuel del Pino, Angela Pistoia, and Giusi Vaira.
\newblock Large mass boundary condensation patterns in the stationary
  {Keller}-{Segel} system.
\newblock {\em J. Differ. Equations}, 261(6):3414--3462, 2016.

\bibitem{ssPiWe06}
Manuel del Pino and Juncheng Wei.
\newblock Collapsing steady states of the {Keller}-{Segel} system.
\newblock {\em Nonlinearity}, 19(3):661--684, 2006.

\bibitem{DKTY2019}
Laurent Desvillettes, Yong-Jung Kim, Ariane Trescases, and Changwook Yoon.
\newblock A logarithmic chemotaxis model featuring global existence and
  aggregation.
\newblock {\em Nonlinear Anal. Real World Appl.}, 50:562--582, 2019.

\bibitem{DLTW23}
Laurent Desvillettes, Philippe Lauren{\c{c}}ot, Ariane Trescases, and Michael
  Winkler.
\newblock Weak solutions to triangular cross diffusion systems modeling
  chemotaxis with local sensing.
\newblock {\em Nonlinear Anal., Theory Methods Appl., Ser. A, Theory Methods},
  226:26, 2023.
\newblock Id/No 113153.

\bibitem{DroNat18_super_conv}
J{\'e}r{\^o}me Droniou and Neela Nataraj.
\newblock Improved {{\(L^2\)}} estimate for gradient schemes and
  super-convergence of the {TPFA} finite volume scheme.
\newblock {\em IMA J. Numer. Anal.}, 38(3):1254--1293, 2018.

\bibitem{Ep12}
Yekaterina Epshteyn.
\newblock Upwind-difference potentials method for {Patlak}-{Keller}-{Segel}
  chemotaxis model.
\newblock {\em J. Sci. Comput.}, 53(3):689--713, 2012.

\bibitem{EI09}
Yekaterina Epshteyn and Ahmet Izmirlioglu.
\newblock Fully discrete analysis of a discontinuous finite element method for
  the keller-segel chemotaxis model.
\newblock {\em J. Sci. Comput.}, 40(1-3):211--256, 2009.

\bibitem{EK09}
Yekaterina Epshteyn and Alexander Kurganov.
\newblock New interior penalty discontinuous {Galerkin} methods for the
  {Keller}-{Segel} chemotaxis model.
\newblock {\em SIAM J. Numer. Anal.}, 47(1):386--408, 2009.

\bibitem{EpXi19}
Yekaterina Epshteyn and Qing Xia.
\newblock Efficient numerical algorithms based on difference potentials for
  chemotaxis systems in 3d.
\newblock {\em J. Sci. Comput.}, 80(1):26--59, 2019.

\bibitem{EGH00}
Robert Eymard, Thierry Gallou{\"e}t, and Rapha{\`e}le Herbin.
\newblock Finite volume methods.
\newblock In {\em Solution of equations in \(\mathbb{R}^n\) (Part 3).
  Techniques of scientific computing (Part 3)}, pages 713--1020. Amsterdam:
  North-Holland/ Elsevier, 2000.

\bibitem{Fil06}
Francis Filbet.
\newblock A finite volume scheme for the {Patlak}-{Keller}-{Segel} chemotaxis
  model.
\newblock {\em Numer. Math.}, 104(4):457--488, 2006.

\bibitem{FoLe07}
Irene Fonseca and Giovanni Leoni.
\newblock {\em Modern methods in the calculus of variations. {{\(L^p\)}}
  spaces}.
\newblock Springer Monogr. Math. New York, NY: Springer, 2007.

\bibitem{FuJi21b}
Kentaro Fujie and Jie Jiang.
\newblock Boundedness of classical solutions to a degenerate {Keller}-{Segel}
  type model with signal-dependent motilities.
\newblock {\em Acta Appl. Math.}, 176:36, 2021.
\newblock Id/No 3.

\bibitem{FuSe22}
Kentaro Fujie and Takasi Senba.
\newblock Global existence and infinite time blow-up of classical solutions to
  chemotaxis systems of local sensing in higher dimensions.
\newblock {\em Nonlinear Anal., Theory Methods Appl., Ser. A, Theory Methods},
  222:7, 2022.
\newblock Id/No 112987.

\bibitem{FuJi20}
Kentarou Fujie and Jie Jiang.
\newblock Global existence for a kinetic model of pattern formation with
  density-suppressed motilities.
\newblock {\em J. Differ. Equations}, 269(6):5338--5378, 2020.

\bibitem{FuJi21a}
Kentarou Fujie and Jie Jiang.
\newblock Comparison methods for a {Keller}-{Segel}-type model of pattern
  formations with density-suppressed motilities.
\newblock {\em Calc. Var. Partial Differ. Equ.}, 60(3):37, 2021.
\newblock Id/No 92.

\bibitem{Grisvard}
Pierre Grisvard.
\newblock {\em Elliptic problems in nonsmooth domains}, volume~69 of {\em
  Class. Appl. Math.}
\newblock Philadelphia, PA: Society for Industrial {and} Applied Mathematics
  (SIAM), reprint of the 1985 hardback ed. edition, 2011.

\bibitem{Grog93}
Konrad Gr{\"o}ger.
\newblock Boundedness and continuity of solutions to linear elliptic boundary
  value problems in two dimensions.
\newblock {\em Math. Ann.}, 298(4):719--727, 1994.

\bibitem{GLY19}
Li~Guo, Xingjie~Helen Li, and Yang Yang.
\newblock Energy dissipative local discontinuous {Galerkin} methods for
  {Keller}-{Segel} chemotaxis model.
\newblock {\em J. Sci. Comput.}, 78(3):1387--1404, 2019.

\bibitem{HeNeVa23}
Federico Herrero-Herv{\'a}s, Mihaela Negreanu, and Antonio~Manuel Vargas.
\newblock Convergence of a meshless numerical method for a chemotaxis system
  with density-suppressed motility.
\newblock {\em Comput. Math. Appl.}, 148:293--301, 2023.

\bibitem{JiLa21}
Jie Jiang and Philippe Lauren{\c{c}}ot.
\newblock Global existence and uniform boundedness in a chemotaxis model with
  signal-dependent motility.
\newblock {\em J. Differ. Equations}, 299:513--541, 2021.

\bibitem{JiLaZh22}
Jie Jiang, Philippe Lauren{\c{c}}ot, and Yanyan Zhang.
\newblock Global existence, uniform boundedness, and stabilization in a
  chemotaxis system with density-suppressed motility and nutrient consumption.
\newblock {\em Commun. Partial Differ. Equations}, 47(5):1024--1069, 2022.

\bibitem{JiWa20}
Hai-Yang Jin and Zhi-An Wang.
\newblock Critical mass on the {Keller}-{Segel} system with signal-dependent
  motility.
\newblock {\em Proc. Am. Math. Soc.}, 148(11):4855--4873, 2020.

\bibitem{KeSe70}
Evelyn~F. Keller and Lee~A. Segel.
\newblock Initiation of slime mold aggregation viewed as an instability.
\newblock {\em J. Theoret. Biol.}, 26(3):399--415, 1970.

\bibitem{KeSe71a}
Evelyn~F. Keller and Lee~A. Segel.
\newblock Model for chemotaxis.
\newblock {\em J. Theoret. Biol.}, 30:225--234, 1971.

\bibitem{KuOg20}
Masaki Kurokiba and Takayoshi Ogawa.
\newblock Singular limit problem for the {K}eller-{S}egel system and
  drift-diffusion system in scaling critical spaces.
\newblock {\em J. Evol. Equ.}, 20(2):421--457, 2020.

\bibitem{KuOg20b}
Masaki Kurokiba and Takayoshi Ogawa.
\newblock Singular limit problem for the two-dimensional {K}eller-{S}egel
  system in scaling critical space.
\newblock {\em J. Differential Equations}, 269(10):8959--8997, 2020.

\bibitem{KuOg22}
Masaki Kurokiba and Takayoshi Ogawa.
\newblock Maximal regularity and a singular limit problem for the
  {P}atlak-{K}eller-{S}egel system in the scaling critical space involving
  {$BMO$}.
\newblock {\em Partial Differ. Equ. Appl.}, 3(1):Paper No. 3, 56, 2022.

\bibitem{Lem13}
Pierre~Gilles Lemari\'e-Rieusset.
\newblock Small data in an optimal {B}anach space for the parabolic-parabolic
  and parabolic-elliptic {K}eller-{S}egel equations in the whole space.
\newblock {\em Adv. Differential Equations}, 18(11-12):1189--1208, 2013.

\bibitem{LXZ23}
Min Li, Zhaoyin Xiang, and Guanyu Zhou.
\newblock The stability analysis of a 2{D} {K}eller-{S}egel-{N}avier-{S}tokes
  system in fast signal diffusion.
\newblock {\em European J. Appl. Math.}, 34(1):160--209, 2023.

\bibitem{LSY17}
Xingjie~Helen Li, Chi-Wang Shu, and Yang Yang.
\newblock Local discontinuous {Galerkin} method for the {Keller}-{Segel}
  chemotaxis model.
\newblock {\em J. Sci. Comput.}, 73(2-3):943--967, 2017.

\bibitem{Lip14}
Friedrich Lippoth.
\newblock On the justification of the quasistationary approximation of several
  parabolic moving boundary problems---{P}art {I}.
\newblock {\em Nonlinear Anal. Real World Appl.}, 17:1--22, 2014.

\bibitem{LWZ18}
Jian-Guo Liu, Li~Wang, and Zhennan Zhou.
\newblock Positivity-preserving and asymptotic preserving method for 2{D}
  {K}eller-{S}egal equations.
\newblock {\em Math. Comp.}, 87(311):1165--1189, 2018.

\bibitem{Ma03}
Americo Marrocco.
\newblock Numerical simulation of chemotactic bacteria aggregation via mixed
  finite elements.
\newblock {\em M2AN, Math. Model. Numer. Anal.}, 37(4):617--630, 2003.

\bibitem{Miz19}
Masaaki Mizukami.
\newblock The fast signal diffusion limit in a {K}eller-{S}egel system.
\newblock {\em J. Math. Anal. Appl.}, 472(2):1313--1330, 2019.

\bibitem{NoSa24}
Toru Nogayama and Yoshihiro Sawano.
\newblock Singular limit problem for the {K}eller-{S}egel system and
  drift-diffusion system in scaling critical {B}esov-{M}orrey spaces.
\newblock {\em J. Math. Anal. Appl.}, 529(2):Paper No. 127207, 51, 2024.

\bibitem{OgSu23}
Takayoshi Ogawa and Takeshi Suguro.
\newblock Maximal regularity of the heat evolution equation on spatial local
  spaces and application to a singular limit problem of the {K}eller-{S}egel
  system.
\newblock {\em Math. Ann.}, 387(1-2):389--431, 2023.

\bibitem{OtSt97}
Hans~G. Othmer and Angela Stevens.
\newblock Aggregation, blowup, and collapse: the {ABC}s of taxis in reinforced
  random walks.
\newblock {\em SIAM J. Appl. Math.}, 57(4):1044--1081, 1997.

\bibitem{ssPiVa15}
Angela Pistoia and Giusi Vaira.
\newblock Steady states with unbounded mass of the {Keller}-{Segel} system.
\newblock {\em Proc. R. Soc. Edinb., Sect. A, Math.}, 145(1):203--222, 2015.

\bibitem{Rac09}
Andrzej Raczy\'nski.
\newblock Stability property of the two-dimensional {K}eller-{S}egel model.
\newblock {\em Asymptot. Anal.}, 61(1):35--59, 2009.

\bibitem{Sa07}
Norikazu Saito.
\newblock Conservative upwind finite-element method for a simplified
  {Keller}-{Segel} system modelling chemotaxis.
\newblock {\em IMA J. Numer. Anal.}, 27(2):332--365, 2007.

\bibitem{Sa09}
Norikazu Saito.
\newblock Conservative numerical schemes for the {Keller}-{Segel} system and
  numerical results.
\newblock {\em RIMS K{\^o}ky{\^u}roku Bessatsu}, B15:125--146, 2009.

\bibitem{Sa12}
Norikazu Saito.
\newblock Error analysis of a conservative finite-element approximation for the
  {Keller}-{Segel} system of chemotaxis.
\newblock {\em Commun. Pure Appl. Anal.}, 11(1):339--364, 2012.

\bibitem{SaSu05}
Norikazu Saito and Takashi Suzuki.
\newblock Notes on finite difference schemes to a parabolic-elliptic system
  modelling chemotaxis.
\newblock {\em Appl. Math. Comput.}, 171(1):72--90, 2005.

\bibitem{ssSchaaf85}
Renate Schaaf.
\newblock Stationary solutions of chemotaxis systems.
\newblock {\em Trans. Am. Math. Soc.}, 292:531--556, 1985.

\bibitem{ssSeSu00}
Takasi Senba and Takashi Suzuki.
\newblock Some structures of the solution set for a stationary system of
  chemotaxis.
\newblock {\em Adv. Math. Sci. Appl.}, 10(1):191--224, 2000.

\bibitem{SSKHT13}
Robert Strehl, Andriy Sokolov, Dmitri Kuzmin, Dirk Horstmann, and Stefan Turek.
\newblock A positivity-preserving finite element method for chemotaxis problems
  in 3d.
\newblock {\em J. Comput. Appl. Math.}, 239:290--303, 2013.

\bibitem{ssWaWe02}
Guofang Wang and Jun-Cheng Wei.
\newblock Steady state solutions of a reaction-diffusion system modeling
  chemotaxis.
\newblock {\em Math. Nachr.}, 233-234:221--236, 2002.

\bibitem{WWX19}
Yulan Wang, Michael Winkler, and Zhaoyin Xiang.
\newblock The fast signal diffusion limit in {K}eller-{S}egel(-fluid) systems.
\newblock {\em Calc. Var. Partial Differential Equations}, 58(6):Paper No. 196,
  40, 2019.

\bibitem{ssWaXu21}
Zhi-An Wang and Xin Xu.
\newblock Steady states and pattern formation of the density-suppressed
  motility model.
\newblock {\em IMA J. Appl. Math.}, 86(3):577--603, 2021.

\bibitem{ZZZ16}
Jiansong Zhang, Jiang Zhu, and Rongpei Zhang.
\newblock Characteristic splitting mixed finite element analysis of
  {Keller}-{Segel} chemotaxis models.
\newblock {\em Appl. Math. Comput.}, 278:33--44, 2016.

\bibitem{ZZLY16}
Rongpei Zhang, Jiang Zhu, Abimael F.~D. Loula, and Xijun Yu.
\newblock Operator splitting combined with positivity-preserving discontinuous
  {Galerkin} method for the chemotaxis model.
\newblock {\em J. Comput. Appl. Math.}, 302:312--326, 2016.

\bibitem{ZS17}
Guanyu Zhou and Norikazu Saito.
\newblock Finite volume methods for a {Keller}-{Segel} system: discrete energy,
  error estimates and numerical blow-up analysis.
\newblock {\em Numer. Math.}, 135(1):265--311, 2017.

\end{thebibliography}

\end{document}